\newtheorem{theorem}{Theorem}[section]
\newtheorem{lemma}[theorem]{Lemma}
\newtheorem{proposition}[theorem]{Proposition}
\newtheorem{cor}[theorem]{Corollary}
\theoremstyle{definition}
\newtheorem{definition}[theorem]{Definition}
\theoremstyle{remark}
\newtheorem{remark}[theorem]{Remark}
\numberwithin{equation}{section}
\begin{document}

\title{Saddle Point Configurations for Spherical Ferromagnets}

\author{Stephen Gustafson}
\address{Department of Mathematics, University of British Columbia, Vancouver, BC V6T 1Z2, Canada}
\email{gustaf@math.ubc.ca}

\author{Daniel Meinert}
\address{Lehrstuhl für Angewandte Analysis, RWTH Aachen University, Kreuzherrenstr. 2, 52062 Aachen, Germany}
\email{meinert@eddy.rwth-aachen.de}

\author{Christof Melcher}
\address{Lehrstuhl für Angewandte Analysis, RWTH Aachen University, Kreuzherrenstr. 2, 52062 Aachen, Germany}
\email{melcher@math1.rwth-aachen.de}

\subjclass[2020]{Primary 35B38; Secondary 58J35, 58E20, 35Q60, 82D40}

\date{\today}

\keywords{Micromagnetics, Magnetic skyrmions, Harmonic map heat flow}

\begin{abstract}
We investigate saddle point configurations in spherical ferromagnets with perpendicular anisotropy. These are modeled by a micromagnetic energy functional on the unit sphere that leads to the emergence of the so-called curvature induced Dzyaloshinskii--Moriya interaction. For this functional we establish the existence of two distinct types of saddle points with zero mapping degree. We use a parabolic flow approach inspired by the harmonic map heat flow where for certain highly symmetric initial conditions we can rule out finite and infinite time blowup. Hence, we obtain solutions of the underlying Euler--Lagrange equation in the long time limit. This is in contrast to harmonic maps between two-spheres, where every map is a local minimizer of the energy functional.
\end{abstract}

\maketitle

\section{Introduction and Main Results}
In this work, we study a spherical thin ferromagnet with perpendicular anisotropy governed by the energy functional
\begin{equation} \label{eq:def_energy}
    \mathcal{E}(\bm{m}) = \frac{1}{2} \int_{\mathbb{S}^2} |\nabla \bm{m}|^2  + \kappa (1- \left\langle \bm{m},\bm{\nu} \right\rangle^2) \, \mathrm{d} \sigma 
\end{equation}
$\bm{m}\colon \mathbb{S}^2\to\mathbb{S}^2$, where $\mathbb{S}^2 \subset \mathbb{R}^3$ is the unit sphere, $\bm{\nu}\in \mathbb{S}^2$ is the outer unit normal field, and $\kappa > 0$. This geometric model was first proposed in the physics literature for spherical shells by Kravchuk et al. in~\cite{kravchuk2016}, where it has been found that the curvature of the underlying manifold induces an interaction similar to that of Dzyaloshinskii--Moriya interaction (DMI), the so-called ``curvature induced DMI''. This can be demonstrated using stereographic coordinates on $\mathbb{S}^2$ and a co-rotating frame~\cite{melcher2019}. This model combines aspects of energetic, topological, and geometrical nature. In fact, this energy is not invariant under individual rotations of domain or target, which is the key feature in stabilizing skyrmionic structures.

The first term of the integral, called the Dirichlet energy, represents short-range exchange interactions and favors uniform fields. The second term, the anisotropy term, corresponds to an \emph{easy-normal anisotropy}, which energetically prefers fields aligned with the sphere's normal. These terms compete, shaping the magnetization configuration.

For (sufficiently regular) fields $\bm{m}\colon \mathbb{S}^2\to \mathbb{S}^2$ we define the mapping degree, or Skyrmion number
\begin{equation} \label{eq:def_degree}
    Q(\bm{m}) = \frac{1}{4\pi} \int_{\mathbb{S}^2}\bm{m}^* \mathrm{d} \sigma \, ,
\end{equation}
where $\bm{m}^* \mathrm{d} \sigma$ denotes the pullback of the oriented area element on $\mathbb{S}^2$. This integer-valued topological invariant separates maps into distinct sectors~\cite{lee2012}. We call critical points of $\mathcal{E}$ with mapping degree $Q(\bm{m}) = 0$ \emph{skyrmions}. The model is closely related to harmonic maps between spheres, which minimize the Dirichlet energy and are represented by rational functions~\cite{eells1978}. Harmonic maps have energy $4\pi Q(\bm{m})$ and are always local minimizers of the Dirichlet energy, i.e., no saddle points of the Dirichlet energy exist, see e.g.~\cite{brezis1985}. 

Recent studies examined $\mathcal{E}$ for local minimizers under constraints on mapping degree and symmetries. The outer normal field $\bm{\nu}$ and its inversion $-\bm{\nu}$ are critical points with $Q(\pm \bm{\nu}) = \pm 1$. Di~Fratta et al.~\cite{difratta2019} showed that for $\kappa > 4$ these are the global minimizers of $\mathcal{E}$. We consider these fields as the trivial critical points. Melcher and Sakellaris~\cite{melcher2019} proved the existence of minimizers for $Q(\bm{m}) = 0$ and all $\kappa > 0$. Schroeder~\cite{schroeder2024} showed minimizers in the class of axisymmetric maps exist for all $\kappa$ and for $\kappa >24$ these are local minimizers among all maps. The common feature of these minimizers is that they consist of the outer normal field $\bm{\nu}$ on almost the entire sphere, except for a small region around one of the poles where the field switches to the inner normal $-\bm{\nu}$. This structure is interpreted as the localized skyrmion.  

We demonstrate the existence of two distinct axisymmetric saddle points of $\mathcal{E}$:
\begin{theorem} \label{theo:saddle_point}
    There exists $\kappa_0 \geq 4$ such that for all $\kappa > \kappa_0$ there exists a critical point $\bm{m}_\kappa \in H^1(\mathbb{S}^2,\mathbb{S}^2)$ of $\mathcal{E}$ with $Q(\bm{m}_\kappa) =0$ that is a saddle point in the sense that it is neither a local minimizer nor a local maximizer of $\mathcal{E}$ in $H^1(\mathbb{S}^2,\mathbb{S}^2)$.
\end{theorem}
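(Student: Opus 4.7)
The plan is to construct $\bm{m}_\kappa$ as the large-time limit of the $L^2$-gradient flow of $\mathcal{E}$ (a harmonic map heat flow modified by the anisotropy term) launched from carefully chosen symmetric initial data, in the spirit of Struwe's classical analysis. The guiding principle is to work inside a symmetry class $\mathcal{A}\subset H^1(\mathbb{S}^2,\mathbb{S}^2)$ with three properties: (i) $\mathcal{A}$ is preserved by the flow; (ii) the localized skyrmion minimizers of Melcher--Sakellaris and Schroeder are excluded from $\mathcal{A}$; and (iii) by the principle of symmetric criticality, any critical point obtained within $\mathcal{A}$ is automatically a critical point of $\mathcal{E}$ on all of $H^1(\mathbb{S}^2,\mathbb{S}^2)$.

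\textbf{Symmetry setup.} The concrete choice I would make is axially symmetric equivariant maps about the polar axis of winding number one, subject to an additional discrete reflection symmetry that pins the polar profile $\Theta$ so that $\Theta(0)$ and $\Theta(\pi)$ take the same prescribed value, automatically forcing $Q(\bm{m})=0$. The two distinct saddles announced in the theorem correspond to two inequivalent choices of these polar boundary values---say $\Theta(0)=\Theta(\pi)=0$ versus $\Theta(0)=\Theta(\pi)=\pi$---each yielding a genuine symmetry class that does not contain the known minimizers (which concentrate near a single pole and hence break the imposed reflection).

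\textbf{Gradient flow and no blow-up.} For each class I would take initial data $\bm{m}_0\in\mathcal{A}$ with energy below an appropriate threshold and run the projected gradient flow, which preserves $\mathcal{A}$ and satisfies the energy identity $\frac{d}{dt}\mathcal{E}(\bm{m}(t))=-\|\partial_t\bm{m}\|_{L^2}^2$. Finite- or infinite-time blow-up would require concentration of a non-trivial harmonic bubble; axisymmetry confines the possible concentration points to the two poles, and the reflection symmetry forces bubbles to occur in symmetric pairs, whose combined Dirichlet energy exceeds any budget one can arrange by a suitable choice of threshold. A Struwe-type bubbling analysis then yields global existence together with a sequence $t_k\to\infty$ along which $\bm{m}(t_k)\to\bm{m}_\kappa\in\mathcal{A}$ strongly in $H^1$, with $\bm{m}_\kappa$ solving the Euler--Lagrange equation of $\mathcal{E}$.

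\textbf{Saddle property and main obstacle.} For $\kappa$ sufficiently large, one verifies via the known localized skyrmion as a competitor that $\inf_{Q=0}\mathcal{E}<\inf_{\mathcal{A}}\mathcal{E}\leq\mathcal{E}(\bm{m}_\kappa)$, so $\bm{m}_\kappa$ cannot be a local minimizer of $\mathcal{E}$ on $H^1(\mathbb{S}^2,\mathbb{S}^2)$. It is likewise not a local maximizer, since $\mathcal{E}$ is unbounded above in any $H^1$-neighborhood by adding finely oscillating tangential perturbations. Hence $\bm{m}_\kappa$ is a genuine saddle point. The principal technical obstacle is the no-blow-up argument: one must classify the harmonic bubbles compatible with the imposed symmetry, quantify their Dirichlet energy cost, and thereby certify both global regularity of the flow and the strict inequality $\inf_{\mathcal{A}}\mathcal{E}>\inf_{Q=0}\mathcal{E}$ for $\kappa\geq\kappa_0$, which together constitute the heart of the argument.
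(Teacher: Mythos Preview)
Your overall architecture---run the gradient flow inside a hemispheric symmetry class, use Struwe's analysis to extract a critical point, then verify the saddle property---is exactly the paper's, but two of your three key steps have genuine gaps.

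\textbf{No blow-up via an energy threshold fails for large $\kappa$.} In the class you describe (axisymmetric profiles with $\Theta(0)=\Theta(\pi)=\pi$ plus the reflection symmetry), the symmetry also pins $\Theta(\tfrac{\pi}{2})=\pi$, which sits away from both anisotropy wells $\Theta=\theta$ and $\Theta=\theta+\pi$ at the equator. A standard wall-energy balance then forces $\inf_{\mathcal{A}}\mathcal{E}\gtrsim\sqrt{\kappa}$, so for large $\kappa$ every admissible initial datum already carries energy well above the $8\pi$ cost of a symmetric pair of bubbles, and your threshold yields no obstruction precisely in the regime the theorem addresses. The paper avoids energy thresholds altogether: it applies a parabolic comparison principle to the scalar profile equation and traps $h(\cdot,t)$ in the wedge $\pi\le h(\theta)\le\pi+\theta$ on $[0,\tfrac{\pi}{2}]$ for all time. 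Since bubbling at a pole forces the rescaled profile to converge there to a nonconstant harmonic profile shifted by a multiple of $\pi$ (\cref{lemma:blow_up_pole}), the wedge pinching $h\to\pi$ as $\theta\to 0$ rules this out directly, uniformly in $\kappa$.

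\textbf{The global-infimum comparison does not exclude local minimality.} The inequality $\inf_{Q=0}\mathcal{E}<\mathcal{E}(\bm{m}_\kappa)$ shows only that $\bm{m}_\kappa$ is not the global minimizer in its degree class; a critical point strictly above the infimum can perfectly well be a local minimum. The paper instead exhibits an explicit perturbation $g_\kappa(\theta)=(h_\kappa'(\theta)-1)\sin\theta$, an infinitesimal altitudinal shift of the equatorial wall, and proves $\delta^2 E[h_\kappa](g_\kappa)<0$ for $\kappa\ge\kappa_0$ via a detailed computation that uses the wedge bounds, the energy estimate $E(h_\kappa)\le C\sqrt{\kappa}$, and the sharpening of the wall profile as $\kappa\to\infty$ (\cref{prop:second_variation_negative}). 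This negative second variation is what actually produces nearby maps with lower energy. For the higher-energy direction the paper simply takes a point on the flow trajectory just before convergence; your oscillation argument would also work there.
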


 We call the critical points from \cref{theo:saddle_point} saddle points of first type. We have no theoretical upper bound for $\kappa_0$ but numerical simulations suggest that $\kappa_0 < 6.7$ and, in fact, we conjecture that $\kappa_0 = 4$. Furthermore, again supported by numerical evidence, we conjecture that for $\kappa \leq 4$ the saddle point of first type transforms into the global minimizer of $\mathcal{E}$, making $\kappa = 4$ a bifurcation point. Also for $\kappa < 4$ there exist saddle points. These are distinct from the one of first type in their geometric structure (see below). We call them saddle points of second type.

\begin{theorem} \label{theo:saddle_point_2}
    There exists $0< \kappa_1 < 4$ such that for all $\kappa > \kappa_1$ there exists a critical point $\bm{m}_\kappa \in H^1(\mathbb{S}^2,\mathbb{S}^2)$ of $\mathcal{E}$ with $Q(\bm{m}_\kappa) = 0$ that is a saddle point in the above sense and that is distinct from the (possible) saddle point from \cref{theo:saddle_point}.
\end{theorem}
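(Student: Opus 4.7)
The plan is to apply the parabolic flow construction behind \cref{theo:saddle_point} in a refined invariant class of axisymmetric maps whose geometric profile is qualitatively distinct from that of the first type. Concretely, I would work inside the class $\Sigma_{\mathrm{eq}}$ of $H^1$ maps that are both axisymmetric and equivariant under the equatorial reflection composed with the corresponding involution on the target. In the reduced polar-angle profile $\theta(s)$ this forces $\theta(\pi - s) = \pi - \theta(s)$, so in particular $\theta(\pi/2) = \pi/2$; the first-type saddle, which has a monotone profile with a single localized flip near one pole, does not lie in this class, and this will provide the distinctness claim required by the theorem.

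First, I would pick a smooth, equatorially symmetric initial datum $\bm{m}_0 \in \Sigma_{\mathrm{eq}}$ with $Q(\bm{m}_0) = 0$ — for example a profile that switches from $\bm{\nu}$ to $-\bm{\nu}$ in small caps around both poles and stays aligned with $\bm{\nu}$ near the equator. Its energy can be tuned to sit strictly above the energy of the degree-zero minimizer of \cite{melcher2019} and strictly below the symmetric bubble threshold discussed below. Then I would run the $L^2$-gradient flow of $\mathcal{E}$ from this datum. Both the axisymmetry and the equatorial equivariance are preserved under the flow, so the problem reduces to a one-dimensional parabolic equation for $\theta(s,t)$ with boundary conditions $\theta(0,t)=0$, $\theta(\pi,t)=\pi$ and the pinning $\theta(\pi/2,t)=\pi/2$.

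Second, I would adapt the $\varepsilon$-regularity and bubbling-exclusion arguments already used for \cref{theo:saddle_point} to this reduced one-dimensional setting. The equatorial symmetry is the crucial new ingredient: any concentration would have to occur simultaneously at two antipodal points, effectively doubling the smallest bubble cost, and together with the anisotropy term this yields a strictly larger admissible range of $\kappa$, producing the threshold $\kappa_1 < 4$. Long-time compactness then follows, and a subsequential limit $\bm{m}_\kappa$ as $t \to \infty$ is a smooth critical point in $\Sigma_{\mathrm{eq}}$ with $Q(\bm{m}_\kappa) = 0$. Its saddle-point character follows as for the first type: it is not a local minimizer by comparison with the global degree-zero minimizer of \cite{melcher2019}, whose energy is strictly smaller, and it is not a local maximizer since the gradient flow strictly decreased energy from the chosen initial datum.

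The hard part will be the bubbling analysis within $\Sigma_{\mathrm{eq}}$: controlling simultaneous concentration at the two poles under the symmetry, and turning the doubled bubble threshold into a quantitative statement that allows $\kappa_1$ strictly below $4$ as announced. A secondary technical issue is the careful selection of the initial datum so that its energy lands in the correct window — above the minimum in $\Sigma_{\mathrm{eq}}$, so the flow does not relax to that minimum, yet below the threshold at which the flow collapses to $\pm\bm{\nu}$ — and the verification that the limit is not a trivially symmetric critical point such as a constant-angle profile. Once these are in place, the distinctness from the first-type saddle is immediate from the equatorial symmetry that the latter lacks.
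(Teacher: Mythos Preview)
Your reduced setting already misfires at the topological level. With boundary values $\theta(0)=0$, $\theta(\pi)=\pi$ for the profile, the degree formula \eqref{eq:Q_axisymmetric} gives $Q=\tfrac{1}{2}(\cos 0-\cos\pi)=1$, not $0$; your class $\Sigma_{\mathrm{eq}}$ contains the normal field $\bm{\nu}$ and nothing of degree zero. The paper's second-type saddle lives instead in the class $H_{0,2}$, with $h(0)=0$, $h(\pi)=2\pi$, and $h(\tfrac{\pi}{2})=\pi$; the relevant symmetry is the antipodal one $h(\theta)=2\pi-h(\pi-\theta)$, not your reflection $\theta(\pi-s)=\pi-\theta(s)$. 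Moreover, your distinctness claim fails: the first-type saddle is itself hemispheric (see the Remark after \cref{theo:saddle_point_2}), so equatorial symmetry alone cannot separate the two types. In the paper the two are distinguished by the boundary data of their profiles: $H_{1,1}$ versus $H_{0,2}$.

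Even fixing the class, two of your key mechanisms are not sound. First, the assertion that $\bm{m}_\kappa$ is not a local minimizer ``by comparison with the global degree-zero minimizer'' is a non-sequitur: the existence of a lower-energy map elsewhere says nothing about local minimality. The paper produces a nearby map of lower energy by computing the second variation $\delta^2E[h_\kappa](g_\kappa)$ with the explicit test direction $g_\kappa=(h_\kappa'-1)\sin\theta$ and showing it is negative. Second, and most importantly, the ``doubled bubble cost'' heuristic has no traction on the threshold $\kappa_1<4$: the no-blowup argument (\cref{rema:hemispheric_wedge_condition_3}) uses the comparison principle with the barrier $2\theta$, which is only a supersolution when $\kappa\ge4$. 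The paper reaches $\kappa<4$ by an entirely different route: the profile $h(\theta)=2\theta$ is an \emph{explicit} critical point at $\kappa=4$ (\cref{prop:kappa_4_solution}), the second variation there reduces to a Legendre operator with a negative and a positive eigenvalue (\cref{prop:kappa_4_solution_saddle_point}), and an implicit function theorem plus simple-eigenvalue perturbation extends both the critical point and its saddle character to a neighborhood $(\kappa_1,\kappa_2)\ni4$. Your outline misses this explicit solution and the spectral/IFT step, which is precisely what pushes $\kappa_1$ below $4$.
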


\begin{remark}[Symmetry of the saddle points]
    The saddle points of both types belong to a special class of maps as long as $\kappa \geq 4$, the so-called \emph{hemispheric maps}, see \cref{def:hemispheric_map}. In particular, this means that the maps are axisymmetric around a given axis and invariant under the antipodal map $\bm{m}(x) \mapsto \bm{m}(-x)$.
\end{remark}

\begin{figure}[ht]
    \begin{center}
        \includegraphics{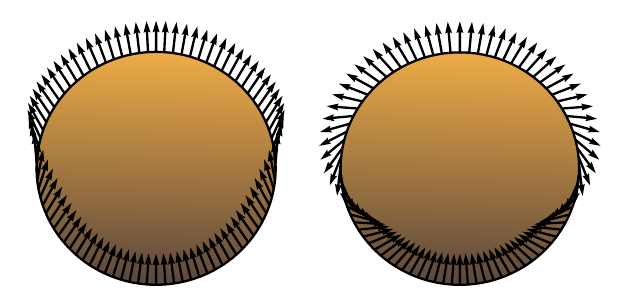}
        \caption{Great circle cross-section plot of the saddle points of first type (left) and of second type (right) for $\kappa = 10$ obtained by numerical simulations}
        \label{fig:saddle_points}
    \end{center}
\end{figure}
\cref{fig:saddle_points} shows the two saddle point types for $\kappa = 10$, with cross-sections of the sphere along a great circle. Both fields align with the outer normal $\bm{\nu}$ on the upper hemisphere and switch to the inner normal $-\bm{\nu}$ on the lower hemisphere. Due to their hemispheric symmetry, these maps do not feature any localized structures.

The structural difference between both types is that the saddle point of first type is homotopic to the constant field in the class of axisymmetric maps, while the second type undergoes a full rotation from pole to pole, disallowing such a homotopy. The saddle point of second type covers $\mathbb{S}^2$ once per hemisphere with opposite orientation, resembling a \emph{skyrmionium}, a planar structure of nested skyrmions with opposite orientations~\cite{finazzi2013,komineas2015,komineas2015a}. Thus, both maps have degree $Q(\bm{m}) = 0$, however they belong to distinct topological sectors within axisymmetric maps. Asymptotically for large $\kappa$, both saddle points become equivalent: they separate $\pm \mathrm{Id}$ on the upper and lower hemisphere with a sharp domain wall at the equator. 

To establish the existence of these critical points, we introduce a parabolic evolution equation analogous to the harmonic map heat flow. Specifically, we consider time-dependent fields $\bm{m}$ governed by the initial value problem
\begin{equation*}
    \begin{aligned}
        \bm{m}_t &=\Delta \bm{m} + \kappa\left\langle \bm{m},\bm{\nu} \right\rangle \bm{\nu} +  \left( |\nabla \bm{m}|^2 - \kappa \left\langle \bm{m},\bm{\nu} \right\rangle^2 \right) \bm{m}\\
    \bm{m}|_{t=0} &= \bm{m}_0 \, ,
    \end{aligned}
\end{equation*}
where $\bm{m}_0\colon \mathbb{S}^2 \to \mathbb{S}^2$ is a given initial map. We call this equation the \emph{skyrmionic map heat flow} (SMHF). It is a semilinear parabolic partial differential equation and is the $L^2$-gradient flow of the energy functional $\mathcal{E}$. This equation does not describe the dynamical behavior of magnetization states, as this one is governed by the Landau--Lifshitz--Gilbert equation~\cite{landau1935,gilbert2004,melcher2012}, which states
\begin{equation*}
    \bm{m}_t =\bm{m} \times \left(\Delta\bm{m} + \kappa\left\langle \bm{m},\bm{\nu} \right\rangle \bm{\nu}\right) - \lambda \bm{m} \times \bm{m} \times \left(\Delta\bm{m} + \kappa\left\langle \bm{m},\bm{\nu} \right\rangle \bm{\nu}\right) \, ,
\end{equation*}
where $\lambda > 0$ is the so-called Gilbert damping parameter and $\times$ denotes the vector cross product in $\mathbb{R}^3$. Noticing that 
\begin{equation*}
    -\bm{m} \times \bm{m} \times \left(\Delta\bm{m} + \kappa\left\langle \bm{m},\bm{\nu} \right\rangle \bm{\nu}\right) = \Delta \bm{m} + \kappa\left\langle \bm{m},\bm{\nu} \right\rangle \bm{\nu} + \bm{m} \left( |\nabla \bm{m}|^2 - \kappa \left\langle \bm{m},\bm{\nu} \right\rangle^2 \right) \, ,
\end{equation*}
we see that the SMHF describes the dynamics with only the damping term present. 

The heat flow serves as an efficient method to study $\mathcal{E}$, as stationary solutions are critical points of $\mathcal{E}$. The approach here is analogous to the introduction of the harmonic map heat flow (HMHF) by Eells and Sampson in~\cite{eells1964} in order to find harmonic representatives of homotopy classes of maps between Riemannian manifolds. For their seminal result, they required the domain manifold to be of negative sectional curvature. Struwe expanded the theory of the HMHF for surfaces in~\cite{struwe1985}, replacing the curvature condition by a topological one. In essence, he showed that the HMHF is smooth up to singular blowup points in space-time. At these points, harmonic 2-spheres separate and take away a multiple of $4\pi$ of energy. In fact, since the functional $\mathcal{E}$ only differs from the Dirichlet energy by a term of lower order, we can transfer many results from the study of the HMHF to the SMHF (see~\cite{struwe2008,lin2008,helein2002} and references therein for details on the HMHF). The main one is the analogous existence result of Struwe, with the only difference that the limit map is a critical point of $\mathcal{E}$. 

With this existence result in hand, the proofs of \cref{theo:saddle_point,theo:saddle_point_2} require two further steps. First, we show that there exist initial maps $\bm{m}_0$ such that the solution to the heat flow equation cannot blow up at any point in time, $T=\infty$ included. This step is inspired by a global existence result of Chang and Ding for the HMHF~\cite{chang1991}. Then the limit function $\bm{m}_\infty$ for $t \to \infty$ of the flow is a critical point of $\mathcal{E}$ with same degree as $\bm{m}_0$. In the final step, we show that $\bm{m}_\infty$ is indeed a saddle point. The crucial element here is that $\bm{m}_\infty$ obeys the hemispheric symmetry. By breaking this symmetry manually, we can construct maps with lower energy than $\bm{m}_\infty$, which allows us to establish the saddle point property.

\section{Preliminaries and Notation}
\label{chapter:preliminaries}
\subsection{General setting}
For $k\in \mathbb{N}_0$ we define the Sobolev spaces of $\mathbb{S}^2$-valued maps as
\begin{equation*}
    H^{k}(\mathbb{S}^2,\mathbb{S}^2) = \left\{ \bm{m}\in H^{k}(\mathbb{S}^2,\mathbb{R}^3) : \bm{m}(x) \in \mathbb{S}^2 \text{~for~almost~all~} x \in \mathbb{S}^2 \right\} \, ,
\end{equation*}
where  $H^{k}(\mathbb{S}^2,\mathbb{R}^3)$ denotes the usual Sobolev space of maps from $\mathbb{S}^2$ to $\mathbb{R}^3$ with square integrable derivative up to $k$-th order, with the convention $H^0(\mathbb{S}^2,\mathbb{R}^3) = L^2(\mathbb{S}^2,\mathbb{R}^3)$. For details on Sobolev spaces modeled on Riemannian surfaces we refer to~\cite{hebey1996,eichhorn2007}. The functional $\mathcal{E}$ defined in \eqref{eq:def_energy} then becomes a map $\mathcal{E}\colon H^1(\mathbb{S}^2,\mathbb{S}^2) \to \mathbb{R}$. For the map $Q$ defined in \eqref{eq:def_degree} we furthermore get by density of $C^\infty (\mathbb{S}^2,\mathbb{S}^2)$ in $H^1 (\mathbb{S}^2,\mathbb{S}^2)$ that this map extends continuously to a map $Q\colon H^1 (\mathbb{S}^2,\mathbb{S}^2) \to \mathbb{Z}$~\cite{schoen1983}.

Critical points of $\mathcal{E}$ are maps $\bm{m}\in H^1(\mathbb{S}^2,\mathbb{S}^2)$ such that if we define for $\bm{\varphi} \in H^1 \cap L^\infty(\mathbb{S}^2,\mathbb{R}^3)$ and $\varepsilon< \|\bm{\varphi} \|_{L^\infty}$ the map 
\begin{equation*}
    \bm{m}_\varepsilon = \frac{\bm{m} + \varepsilon \bm{\varphi}}{|\bm{m} + \varepsilon \bm{\varphi}|} \, ,
\end{equation*}
then $\bm{m}_\varepsilon$ satisfies
\begin{equation*}
    \frac{\mathrm{d}}{\mathrm{d} \varepsilon} \mathcal{E}(\bm{m}_\varepsilon)\Big|_{\varepsilon=0} = 0 \, .
\end{equation*}
This is equivalent to requiring $\bm{m}$ to solve the following equation in a weak sense in $H^1(\mathbb{S}^2,\mathbb{R}^3)$ with test functions in the space $H^1\cap L^\infty(\mathbb{S}^2,\mathbb{R}^3)$
\begin{equation} \label{eq:EL}
    0 =\Delta \bm{m} + \bm{m} |\nabla \bm{m}|^2 + \kappa\left\langle \bm{m},\bm{\nu} \right\rangle \Pi_{\bm{m}}\bm{\nu} \, ,  
\end{equation}
where $\Pi_{\bm{v}}$ is the pointwise orthogonal projection onto the tangent space of $\mathbb{S}^2$ at $\bm{v}$ embedded into $\mathbb{R}^3$ defined as
\begin{equation*}
    \Pi_{\bm{v}}\bm{w} = \bm{w} - \left\langle \bm{v},\bm{w} \right\rangle \bm{v} \, 
\end{equation*}
for $\bm{v} \in \mathbb{S}^2$ and $\bm{w} \in \mathbb{R}^3$. We call \eqref{eq:EL} the Euler--Lagrange equation of $\mathcal{E}$. By a result of Hélein~\cite{helein2002}, we get that weak solutions are smooth maps, see~\cite{schroeder2024}. Moreover, we can transfer another result of the theory of harmonic maps to our setting, first proven by Sacks and Uhlenbeck~\cite{sacks1981}, namely that solutions of \eqref{eq:EL} on pointed domains can be smoothly extended. The proof is completely analogous to the case of harmonic maps as the lower order anisotropy term does not affect the arguments.

\begin{proposition}[Removability of point singularities] \label{cor:point_removability}
    Let $\bm{m} \in H^1(\mathbb{S}^2,\mathbb{S}^2)$ and let $\{z_1, \ldots, z_k\}$ be a finite set such that $\bm{m}$ is a weak solution of \eqref{eq:EL} on $\mathbb{S}^2\setminus\{z_1, \ldots, z_k\}$. Then $\bm{m}$ extends to a smooth map on $\mathbb{S}^2$ that is a solution of \eqref{eq:EL}.
\end{proposition}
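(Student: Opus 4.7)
The plan is to adapt the classical Sacks--Uhlenbeck removable singularity argument for weakly harmonic maps between surfaces, with the key observation that the anisotropy contribution in \eqref{eq:EL}, namely $\kappa \langle \bm{m},\bm{\nu}\rangle \Pi_{\bm{m}} \bm{\nu}$, is pointwise bounded by $\kappa$ and smooth in $\bm{m}$. It is therefore a subcritical, lower-order perturbation of the harmonic-map-type critical nonlinearity $\bm{m}|\nabla \bm{m}|^2$ and does not interfere with any of the regularity estimates.

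First I would localize the problem around each singular point $z_i$: via a conformal chart on $\mathbb{S}^2$, pull $\bm{m}$ back to a finite-energy weak solution on a punctured Euclidean disk $D \setminus \{0\} \subset \mathbb{R}^2$ of an equation of the form
\begin{equation*}
-\Delta \bm{m} = \bm{m} |\nabla \bm{m}|^2 + \bm{f}(\bm{m}),
\end{equation*}
where $\bm{f}(\bm{m}) = -\kappa \langle \bm{m},\bm{\nu}\rangle \Pi_{\bm{m}}\bm{\nu}$ (expressed in the chart) is uniformly bounded because $|\bm{m}|=1$. Since the global Dirichlet energy is finite, absolute continuity of the integral gives $\int_{D_r} |\nabla \bm{m}|^2\, \mathrm{d}x \to 0$ as $r \to 0$.

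Second, I would invoke H\'elein's $\varepsilon$-regularity theorem: there exists $\varepsilon_0 > 0$ such that a weak solution $\bm{m}$ with $\int_{B_r(x)} |\nabla \bm{m}|^2 < \varepsilon_0$ is smooth on $B_{r/2}(x)$ with estimates depending only on $\varepsilon_0$, $r$, and $\|\bm{f}\|_{L^\infty}$. The bounded source $\bm{f}$ is handled by standard Calder\'on--Zygmund / Moser-iteration adjustments and does not degrade the threshold. Combined with the previous step, this shows that for all sufficiently small $r$ the map is smooth on $D_r \setminus \{0\}$ with uniform $C^k$ bounds on every dyadic annulus $A(\rho, 2\rho) \subset D_r$, after conformal rescaling.

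The main step, and the one I expect to require the most care, is upgrading this uniform smoothness on annuli to a continuous extension across $0$. Using conformal invariance of the Dirichlet integral in two dimensions, I would rescale $\bm{m}_\rho(z) = \bm{m}(\rho z)$ on $A(1,2)$: these rescaled maps have vanishing energy and uniformly bounded derivatives, so any subsequential limit is constant. An iteration/oscillation-decay argument across dyadic scales (the standard Morrey-type bound extracted from small-energy $\varepsilon$-regularity) then yields $\mathrm{osc}_{A(r,2r)} \bm{m} \to 0$ as $r \to 0$, and hence a continuous extension of $\bm{m}$ at $z_i$. The extended map is a weak $H^1$ solution of \eqref{eq:EL} on the full $\mathbb{S}^2$, so H\'elein's regularity theorem (already cited in the paper) upgrades it to a smooth solution. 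The main obstacle is controlling the anisotropy contribution during the rescaling: because $\bm{f}$ is bounded but has scaling weight one higher than $|\nabla \bm{m}|^2$, the rescaled equation has a source of size $\rho^2 \bm{f}$ which vanishes in the blow-up limit, so the harmonic-map blow-up dichotomy (constant limit vs.\ bubble) goes through unchanged and the classical argument transfers verbatim.
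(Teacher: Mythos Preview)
Your proposal is correct and follows exactly the approach the paper indicates: the paper does not give a detailed proof but simply cites Sacks--Uhlenbeck and remarks that ``the lower order anisotropy term does not affect the arguments,'' which is precisely your key observation that $\kappa\langle\bm{m},\bm{\nu}\rangle\Pi_{\bm{m}}\bm{\nu}$ is a bounded, subcritical perturbation that scales away under blow-up. One minor point worth making explicit: when you pull back to a Euclidean disk via a conformal chart, the conformal factor multiplies the anisotropy term (since the Dirichlet part is conformally invariant but the zeroth-order term is not), so $\bm{f}$ carries an extra bounded factor from the metric; this does not change anything in your argument but should be noted for completeness.
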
 

\subsection{Notation on the sphere}
We use two types of scripts for elements on the sphere $\mathbb{S}^2$. As we consider maps $\mathbb{S}^2 \to \mathbb{S}^2$, we differentiate elements in domain and target. We write elements in the domain $\mathbb{S}^2$ in normal script letters, e.g., $x \in \mathbb{S}^2$. In the view of $\mathbb{S}^2$ as a Riemannian manifold, the expression $x$ in coordinates depends on the chosen parametrization. For elements in the target, we use boldface letters, e.g., $\bm{m} \in \mathbb{S}^2$. In this case, we view $\mathbb{S}^2$ as a subset of $\mathbb{R}^3$ and boldface letters denote column vectors in $\mathbb{R}^3$.

Sometimes we express maps defined on the sphere in ``stereographic coordinates centered at $x_0$'' for some point $x_0 \in \mathbb{S}^2$. This means that we consider the map in the parametrization $\bm{\Phi}_{x_0}\colon \mathbb{R}^2 \to \mathbb{S}^2$ given by
\begin{equation*}
    (x_1, x_2) \mapsto Q_{x_0}\left[\frac{2}{1+|x|^2} \left(x_1, x_2, \frac{1-|x|^2}{2}\right)^T \right]\, ,
\end{equation*}
where $Q_{x_0}\in SO(3)$ is the rotation matrix that maps $\hat{\bm{e}}_3$ to $x_0$. In particular, this means $\bm{\Phi}_{x_0}(0,0) = x_0$.

\subsection{Axisymmetric maps}

We introduce joint rotations or reflections in target and domain. For $R \in O(3)$ we define $\bm{m} \mapsto \bm{m}_R$, where
\begin{equation} \label{eq:definition_O3_action}
    \bm{m}_R (x) = R^{-1}\bm{m}(R.x) \, .
\end{equation}
Here, $R.x$ is the action of the transform $R$ on the point $x\in \mathbb{S}^2$ and the outer action is to be interpreted as the matrix vector product in $\mathbb{R}^3$. These joint rotations or reflections define a right group action of $O(3)$ on the space of functions $\mathbb{S}^2 \to \mathbb{S}^2$. Visually, $\bm{m}_R$ can be interpreted as the field after applying the action of $R^{-1}$ to the sphere with the vector field $\bm{m}$ attached to it at its base points, as depicted in \cref{fig:rotation}.

\begin{figure}[t]
    \centering
        \includegraphics{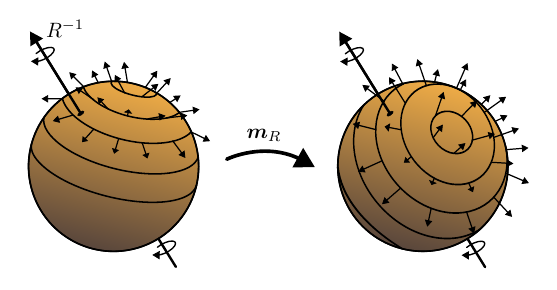} 
        \caption{The action $\bm{m}_R$ can be interpreted as the sphere with the vector field $\bm{m}$ attached to it at its base points, rotated/inverted by $R^{-1}$.} 
        \label{fig:rotation} 
\end{figure} 
    
Using the orthogonality of $R \in O(3)$, we obtain that $|\nabla \bm{m}_R| (x) = |\nabla \bm{m}|(R.x)$ and $|\bm{m}_R|(x) = |\bm{m}|(x)$ for all $x\in \mathbb{S}^2$, from which follows that this action defines an isometry on $H^1(\mathbb{S}^2,\mathbb{S}^2)$. Moreover, we have $R\bm{\nu}(x) = \bm{\nu}(R.x)$, such that we conclude that the energy $\mathcal{E}$ is invariant under the action, i.e., $\mathcal{E}(\bm{m}_R) = \mathcal{E}(\bm{m})$.  With this observation, one can also prove that for any critical point $\bm{m}\colon \mathbb{S}^2\to\mathbb{S}^2$ the map $\bm{m}_R$ is also a critical point of $\mathcal{E}$ for all $R\in O(3)$.

\begin{definition} \label{def:axisymmetric_map}
    We call a map $\bm{m}\colon\mathbb{S}^2 \to \mathbb{S}^2$ axisymmetric if and only if $\bm{m}_R = \bm{m}$ for all $R\in O(3)_{\hat{\bm{e}}_3}$, the stabilizer subgroup of $O(3)$ with respect to $\hat{\bm{e}}_3$, that is rotations around the $\hat{\bm{e}}_3$-axis and reflections in the $\hat{\bm{e}}_1\hat{\bm{e}}_2$-subspace. 
\end{definition}
Maps that are only invariant under the action of $SO(3)_{\hat{\bm{e}}_3}$, i.e., joint rotations, are called \emph{equivariant maps} and are not the same as axisymmetric maps. For instance, the map $\bm{m}(x) \equiv \hat{\bm{e}}_\varphi$, the azimuthal unit vector field, is equivariant but not axisymmetric.

We denote spherical coordinates on the sphere with azimuthal angle around the $\hat{\bm{e}}_3$-axis by 
\begin{align*}
    \bm{\hat{\Psi}}&\colon (0,\pi)\times (0,2\pi) \to \mathbb{S}^2 \, \\
    (\theta,\varphi)& \mapsto (\cos\varphi \sin \theta, \sin \varphi \sin \theta, \cos \theta)^T \, 
\end{align*}
and by $\bm{\Psi}$ the map $\bm{\hat{\Psi}}$ with the domain extended to $\mathbb{R} \times [0,2\pi]$. The following lemma tells us that all information of an axisymmetric map $\bm{m}$ is contained in a so-called profile function $h$ that depends only on the polar angle. The proof can be found in~\cite{schroeder2024}.
\begin{lemma}[{\cite[Lemma 2.5]{schroeder2024}}]
    A continuous map $\bm{m}\colon\mathbb{S}^2\to\mathbb{S}^2$ is axisymmetric if and only if there exists a continuous map $h\colon[0,\pi]\to \mathbb{R}$ such that 
    \begin{equation} \label{eq:axisymmetric_form}
        \bm{m}(\theta,\varphi) = \bm{\Psi}(h(\theta),\varphi) \quad\text{~for~all~}\quad (\theta,\varphi) \in (0,\pi) \times (0,2\pi) \, .
    \end{equation}
    We then call $h$ the \emph{profile} of $\bm{m}$. Note that by the symmetry condition $\bm{m}(\pm \hat{\bm{e}}_3) = \pm \hat{\bm{e}}_3$ we can continuously extend $h$ to $0$ and $\pi$ by $h(0) = m\pi$, $h(\pi) = n\pi$ with $m,n\in \mathbb{Z}$.
\end{lemma}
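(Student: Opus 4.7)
The plan is to prove both implications; the backward direction is essentially a direct verification, while the forward direction carries the substantive content.

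For the backward direction, assume $\bm{m}(\theta,\varphi) = \bm{\Psi}(h(\theta),\varphi)$. The group $O(3)_{\hat{\bm{e}}_3}$ is generated by the rotations $R_\alpha$ about the $\hat{\bm{e}}_3$-axis and the reflection $S$ across the $\hat{\bm{e}}_1\hat{\bm{e}}_3$-plane (i.e.\ $(x,y,z)^T \mapsto (x,-y,z)^T$), so it suffices to verify $\bm{m}_{R_\alpha} = \bm{m}$ and $\bm{m}_S = \bm{m}$. Using the factorization
\[
    \bm{\Psi}(h(\theta),\varphi) = R_\varphi \, (\sin h(\theta),\, 0,\, \cos h(\theta))^T,
\]
both identities follow from $R_\alpha^{-1} R_{\varphi + \alpha} = R_\varphi$ and $S R_{-\varphi} = R_\varphi S$ by direct substitution.

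For the forward direction, suppose $\bm{m}$ is axisymmetric. I would first apply $\bm{m}_{R_\varphi} = \bm{m}$ evaluated at $(\theta, 0)$ to obtain the factorization
\[
    \bm{m}(\theta,\varphi) = R_\varphi \, \bm{m}(\theta, 0),
\]
reducing the problem to the restriction to the meridian $\{\varphi = 0\}$. Next, since $S$ fixes this half-meridian pointwise, the relation $\bm{m}_S = \bm{m}$ forces $S \bm{m}(\theta, 0) = \bm{m}(\theta, 0)$; consequently the $\hat{\bm{e}}_2$-component of $\bm{m}(\theta, 0)$ vanishes, and I may write $\bm{m}(\theta, 0) = (a(\theta), 0, c(\theta))^T$ with $a(\theta)^2 + c(\theta)^2 = 1$. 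This defines a continuous curve $\gamma(\theta) = (a(\theta), c(\theta))$ in the unit circle of $\mathbb{R}^2$.

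The remaining step is to produce $h$ as a continuous lift of $\gamma$ under the covering map $t \mapsto (\sin t, \cos t)$ from $\mathbb{R}$ to the unit circle; this is the main subtlety. Since $[0,\pi]$ is simply connected and therefore contractible, standard covering space theory yields no monodromy obstruction and provides a continuous $h\colon [0,\pi] \to \mathbb{R}$ with $a = \sin h$ and $c = \cos h$, unique up to the choice of $h(0)$. Substituting back into the factorization yields the claimed form $\bm{m}(\theta,\varphi) = \bm{\Psi}(h(\theta),\varphi)$. The boundary condition at the poles follows from continuity of $\bm{m}$: since $\bm{m}(0,\varphi)$ must be independent of $\varphi$, the formula forces $\sin h(0) = 0$, and similarly $\sin h(\pi) = 0$, so $h(0), h(\pi) \in \pi\mathbb{Z}$. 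One only needs to confirm that $\gamma$ extends continuously up to and including the poles, which is immediate from the continuity of $\bm{m}$ on all of $\mathbb{S}^2$.
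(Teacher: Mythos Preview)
The paper does not supply its own proof of this lemma; it merely cites \cite{schroeder2024} with the remark ``The proof can be found in~\cite{schroeder2024}.'' Your argument is correct and self-contained: the backward direction is a clean verification via the generators of $O(3)_{\hat{\bm{e}}_3}$, and the forward direction proceeds by first using rotational invariance to reduce to the meridian $\varphi=0$, then using the reflection to kill the $\hat{\bm{e}}_2$-component there, and finally lifting the resulting $S^1$-valued curve through the covering $t\mapsto(\sin t,\cos t)$. This is precisely the natural route and matches what one expects the cited proof to do; there is nothing to compare against in the present paper.
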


We define for $k\in \mathbb{N}$ and $0< \alpha <1$
\begin{equation*}
    \mathcal{F}^{k,\alpha} \coloneqq \{h \in  C^{k,\alpha}([0, \pi]) : h(0) = m\pi,\, h(\pi) = n\pi, \;\; m,n\in \mathbb{Z} \}\, ,
\end{equation*}
and likewise
\begin{equation*}
    \mathcal{F}^{\infty} \coloneqq \{h \in  C^{\infty}([0, \pi]) : h(0) = m\pi,\, h(\pi) = n\pi, \;\;m,n\in \mathbb{Z} \}\, ,
\end{equation*}
and finally
\begin{equation*}
    \mathcal{PF}^1 \coloneqq \{h \in  PC^1([0, \pi]) : h(0) = m\pi,\, h(\pi) = n\pi,\;\; m,n\in \mathbb{Z} \}\, ,
\end{equation*}
where $PC^1([0,\pi])$ is the space of continuous and piecewise continuously differentiable functions on $[0,\pi]$.

We state the following regularity results for axisymmetric maps and their profile functions without proof. It is straightforward and only requires a careful analysis of the derivatives at the poles. This can be done by using stereographic coordinates centered at these points and checking differentiability there by hand. 
\begin{lemma} \label{lemma:regularity_m_h}
    Let $\bm{m} \colon\mathbb{S}^2 \to \mathbb{S}^2$ be axisymmetric with profile $h$. Then for all $0<\alpha<1$ we have $\bm{m} \in  C^{1,\alpha}(\mathbb{S}^2,\mathbb{S}^2)$ if $h \in \mathcal{F}^{1,\alpha}$. Conversely, for all $k\in \mathbb{N}$ and $0<\alpha<1$ we get $h \in \mathcal{F}^{k,\alpha}$ if $\bm{m} \in C^{k,\alpha}(\mathbb{S}^2,\mathbb{S}^2)$. Moreover, if $h \in \mathcal{PF}^1$, then $\bm{m} \in H^1(\mathbb{S}^2,\mathbb{S}^2)$.
    \begin{remark}
        The weaker first statement results from the fact that higher order derivatives of $\bm{m}$ at the poles in $\varphi$-direction cannot necessarily be bounded without more restrictions on $h$. 
    \end{remark}
\end{lemma}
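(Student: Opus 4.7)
My plan is to split the verification into an interior part and a pole-analysis part, and finally to handle the $H^1$ claim by a direct energy computation. For the interior, observe that on $\mathbb{S}^2\setminus\{\pm\hat{\bm{e}}_3\}$ the map $\bm{\Psi}$ is a smooth diffeomorphism onto its image, and the spherical coordinates $(\theta,\varphi)$ are a smooth chart. Hence on any compact subset of $(0,\pi)\times[0,2\pi)$ the map $\bm{m}=\bm{\Psi}(h(\theta),\varphi)$ inherits exactly the regularity of $h$ in the $\theta$-direction, and is smooth in $\varphi$. Both directions of the $C^{k,\alpha}$ statement are then clear away from the poles: on the one hand, $\bm{m}$ is $C^{1,\alpha}$ (or $C^{k,\alpha}$, with the restriction discussed below) if $h$ is; on the other, $h(\theta)=\operatorname{atan2}(m_2(\theta,0),m_1(\theta,0))+\text{const}$ along a meridian recovers the profile with its full regularity.

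The core of the argument is the analysis at the poles. I would use the stereographic chart $\bm{\Phi}_{\pm\hat{\bm{e}}_3}$ centered at the respective pole, in which the polar angle satisfies $\theta=\theta(r)$ with $\theta(0)=0$ and $\theta\in C^\infty([0,\infty))$ as a function of $r=|x|$, and the azimuth $\varphi$ is the argument of $x_1+ix_2$. Writing $h(0)=m\pi$ and $g(\theta)=h(\theta)-m\pi$ with $g(0)=0$, the identities $\sin(m\pi+g)=(-1)^m\sin g$, $\cos(m\pi+g)=(-1)^m\cos g$ give
\begin{equation*}
\bm{m}\circ\bm{\Phi}_{\hat{\bm{e}}_3}(x)=(-1)^m\Bigl(\tfrac{\sin g(\theta(r))}{r}x_1,\ \tfrac{\sin g(\theta(r))}{r}x_2,\ \cos g(\theta(r))\Bigr)^T.
\end{equation*}
The third component is manifestly a $C^{k,\alpha}$ function of $r$ if $g\in C^{k,\alpha}$ near $0$, because $\theta(r)$ is smooth. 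For the first two components, the radial factor $F(r)=\sin g(\theta(r))/r$ is well-defined at $r=0$ with $F(0)=g'(0)\theta'(0)$. To obtain $C^{1,\alpha}$ of the product $F(r)x_i$ in $(x_1,x_2)$ near the origin, I would Taylor-expand $g$ to first order and use the Hölder continuity of $g'$: one writes $\sin g(\theta(r))=g'(0)\theta'(0)\,r+R(r)$ with $R\in C^{1,\alpha}$ and $R(r)=O(r^{1+\alpha})$, so that $F(r)x_i=g'(0)\theta'(0)x_i+R(r)x_i/r$, and then verifies that $R(r)x_i/r$ is $C^{1,\alpha}$ at the origin by direct differentiation, using that $R(r)/r\in C^{0,\alpha}$. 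The reverse direction (from $\bm{m}$ to $h$) is obtained by restricting the chart formula to a meridian, which yields the profile and its derivatives in terms of restrictions and differences of components of $\bm{m}$; this immediately gives the full $C^{k,\alpha}$ improvement without loss of order.

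For the $H^1$ claim, I would express the Dirichlet density in spherical coordinates,
\begin{equation*}
|\nabla\bm{m}|^2\sin\theta\,\mathrm{d}\theta\,\mathrm{d}\varphi=\Bigl(h'(\theta)^2+\frac{\sin^2 h(\theta)}{\sin^2\theta}\Bigr)\sin\theta\,\mathrm{d}\theta\,\mathrm{d}\varphi,
\end{equation*}
and check integrability on $[0,\pi]\times[0,2\pi]$. Since $h\in\mathcal{PF}^1$, the derivative $h'$ is bounded on $[0,\pi]$ (except at finitely many points), so $h'(\theta)^2\sin\theta$ is integrable. For the potential term, continuity with $h(0)=m\pi$ and the bound $|h(\theta)-m\pi|\le \|h'\|_\infty\,\theta$ give $\sin^2 h(\theta)=O(\theta^2)$ near $0$, hence $\sin^2 h(\theta)/\sin\theta=O(\theta)$, which is integrable; symmetrically at $\theta=\pi$.

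The main obstacle is the pole analysis: $\varphi$ fails to be a smooth coordinate at the pole, so one cannot simply differentiate $\bm{\Psi}(h(\theta),\varphi)$ in $\varphi$. The workaround is the stereographic representation above, in which the apparent singularity $\sin g(\theta)/r$ is removable precisely because $g(0)=0$, and in which the $\varphi$-singularity is absorbed into the smooth factors $x_1,x_2$. This also explains the asymmetry noted in the remark: when differentiating $F(r)x_i$ twice or more, one picks up factors like $x_ix_j/r^2$ that remain bounded but are not continuous at the origin unless higher Taylor coefficients of $h$ at the poles happen to vanish, which is why the forward implication cannot be upgraded to $C^{k,\alpha}$ for $k\ge 2$ without extra assumptions on $h$.
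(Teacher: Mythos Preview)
The paper does not actually prove this lemma: it explicitly states the result ``without proof'' and only remarks that ``it is straightforward and only requires a careful analysis of the derivatives at the poles. This can be done by using stereographic coordinates centered at these points and checking differentiability there by hand.'' Your plan follows precisely this hint---splitting into the interior (where spherical coordinates are a smooth chart and regularity transfers trivially) and the poles (where you pass to stereographic coordinates and analyze the removable singularity $\sin g(\theta(r))/r$)---and correctly fills in the details the paper omits, including the explanation of why the forward implication stalls at $C^{1,\alpha}$. The $H^1$ argument via the reduced Dirichlet density is likewise the natural route and matches the reduced energy formula the paper records in \eqref{eq:reduced_energy}.
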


For axisymmetric maps expressed in spherical coordinates the energy reduces to the following expression
\begin{equation} \label{eq:reduced_energy}
    \mathcal{E}(\bm{m})=\pi \int_0^\pi {h'}^2 \sin \theta+\frac{\sin ^2 h}{\sin \theta}+\kappa \sin ^2(h-\theta) \sin \theta \, \mathrm{d} \theta\eqqcolon 2\pi E(h) \, ,
\end{equation}
which follows from inserting \eqref{eq:axisymmetric_form} into $\mathcal{E}$ and computing the individual terms. Similarly, for the mapping degree of an axisymmetric map we compute, using the explicit formula for spherical coordinates
\begin{equation*}
    \bm{m}^*\mathrm{d} \sigma  = \left\langle \bm{m},\frac{\partial \bm{m}}{\partial \theta} \times \frac{\partial \bm{m}}{\partial \varphi} \right\rangle  \mathrm{d} \theta \mathrm{d} \varphi \, ,
\end{equation*}
the expression
\begin{equation}\label{eq:Q_axisymmetric}
    Q(\bm{m}) =\frac{1}{4 \pi} \int_{\mathbb{S}^2} \bm{m}^* \mathrm{d} \sigma =\frac{1}{2} \int_0^\pi h' \sin h \, \mathrm{d} \theta=\frac{1}{2} \left(\cos (h(0))-\cos (h(\pi))\right) \, . 
\end{equation}

The following result shows the usefulness of the axisymmetric ansatz: we reduce the problem to a one dimensional one, which has already been observed in~\cite{schroeder2024} and is also a known result for harmonic maps, see e.g.,~\cite{chang1991}.
\begin{lemma} \label{lemma:EL_h}
    Let $\bm{m}$ be axisymmetric with profile $h$. Then $\bm{m}$ solves the Euler--Lagrange equation \eqref{eq:EL} if and only if $h$ solves
    \begin{equation} \label{eq:EL_h}
        0=h''+\frac{\cos \theta}{\sin \theta} h'-\frac{\sin 2 h}{2 \sin ^2 \theta} - \frac{\kappa}{2} \sin(2h-2\theta)
    \end{equation}
    on $(0,\pi)$.
    \begin{proof}
        Computing and inserting the individual terms in the Euler--Lagrange equation with the axisymmetric ansatz in spherical coordinates we find
        \begin{align*}
            &\phantom{{}={}}\; \Delta \bm{m} + |\nabla \bm{m}|^2 \bm{m} - \kappa\left\langle \bm{m},\bm{\nu} \right\rangle \Pi_{\bm{m}} \bm{\nu} \\            
            &= \left(h'' + \frac{\cos\theta}{\sin\theta} h' - \frac{\sin h \cos h}{\sin^2\theta}-\frac{\kappa}{2}\sin(2h-2\theta)\right) \begin{pmatrix} \cos\varphi \cos h\\ \sin \varphi\cos h \\ -\sin h\end{pmatrix} \, .
        \end{align*}
        Therefore, the left-hand side is zero for all $(\theta,\varphi) \in (0,\pi)\times [0,2\pi)$ if and only if the coefficient of the vector on the right-hand side is zero, which is equivalent to \eqref{eq:EL_h}. Hence, if $\bm{m}$ solves \eqref{eq:EL} on $\mathbb{S}^2$, then $h$ solves \eqref{eq:EL_h} on $(0,\pi)$. Conversely, if $h$ solves \eqref{eq:EL_h} on $(0,\pi)$, then $\bm{m}$ solves \eqref{eq:EL} on $\mathbb{S}^2\setminus \{\pm \hat{\bm{e}}_3\}$ and by \cref{cor:point_removability} on the whole sphere. 
    \end{proof}
\end{lemma}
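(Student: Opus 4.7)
The approach is a direct computation: insert the axisymmetric ansatz $\bm{m}(\theta,\varphi)=(\cos\varphi\sin h(\theta),\sin\varphi\sin h(\theta),\cos h(\theta))^T$ into \eqref{eq:EL} and show that the resulting vector field in $\mathbb{R}^3$ is a scalar multiple of the single unit vector $\bm{\tau}:=(\cos\varphi\cos h,\sin\varphi\cos h,-\sin h)^T$, with scalar coefficient equal to the right-hand side of \eqref{eq:EL_h}. Since $\bm{\tau}$ never vanishes on $(0,\pi)\times[0,2\pi)$, the two equations are then equivalent there, and the behaviour at the poles is handled separately.

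For the Laplacian term, I would set up the moving orthonormal frame $\{\bm{m},\bm{\tau},\hat{\bm{e}}_\varphi\}$ and observe that, since both $\bm{m}$ and $\partial_\theta\bm{m}=h'\bm{\tau}$ have zero azimuthal component, the $\theta$-part of $\Delta=\tfrac{1}{\sin\theta}\partial_\theta(\sin\theta\,\partial_\theta)+\tfrac{1}{\sin^2\theta}\partial_\varphi^2$ produces no $\hat{\bm{e}}_\varphi$-contribution. The only care needed is expanding $\partial_\varphi^2\bm{m}$ in the frame: projecting onto the frame gives $\partial_\varphi^2\bm{m}=-\sin^2 h\,\bm{m}-\sin h\cos h\,\bm{\tau}$. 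Combining these pieces yields $\Delta\bm{m}+|\nabla\bm{m}|^2\bm{m}=(h''+\tfrac{\cos\theta}{\sin\theta}h'-\tfrac{\sin 2h}{2\sin^2\theta})\bm{\tau}$, where the $\bm{m}$-components cancel automatically because $\langle\bm{m},\Delta\bm{m}\rangle=-|\nabla\bm{m}|^2$.

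For the anisotropy term, since $\bm{\nu}(x)=x$ one has $\langle\bm{m},\bm{\nu}\rangle=\sin h\sin\theta+\cos h\cos\theta=\cos(h-\theta)$, and projecting $\bm{\nu}$ onto the same frame gives $\bm{\nu}=\cos(h-\theta)\bm{m}-\sin(h-\theta)\bm{\tau}$, hence
\begin{equation*}
\kappa\langle\bm{m},\bm{\nu}\rangle\Pi_{\bm{m}}\bm{\nu}=-\kappa\cos(h-\theta)\sin(h-\theta)\bm{\tau}=-\tfrac{\kappa}{2}\sin(2h-2\theta)\bm{\tau}
\end{equation*}
by the double-angle identity. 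Adding the Laplacian and anisotropy contributions produces a scalar multiple of $\bm{\tau}$ whose coefficient is exactly the right-hand side of \eqref{eq:EL_h}, which proves equivalence of the two equations on $\mathbb{S}^2\setminus\{\pm\hat{\bm{e}}_3\}$. For the converse direction on the full sphere, I would invoke \cref{cor:point_removability} to lift a weak solution of \eqref{eq:EL} from the punctured sphere to all of $\mathbb{S}^2$.

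The main obstacle here is not conceptual but bookkeeping: one must check that the azimuthal second derivative generates precisely the $-\sin 2h/(2\sin^2\theta)$ term and nothing along $\hat{\bm{e}}_\varphi$. This reduces to the identity $\partial_\varphi^2\bm{m}=-\sin^2 h\,\bm{m}-\sin h\cos h\,\bm{\tau}$ noted above; once this is in hand, every vector appearing in \eqref{eq:EL} lives in $\operatorname{span}\{\bm{m},\bm{\tau}\}$ and the $\bm{m}$-components cancel by unit-length considerations, so the whole equation collapses to a single scalar equation on the profile.
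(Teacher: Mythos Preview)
Your proposal is correct and follows essentially the same approach as the paper: a direct computation in spherical coordinates showing that the Euler--Lagrange expression reduces to the scalar in \eqref{eq:EL_h} times the unit vector $\bm{\tau}=(\cos\varphi\cos h,\sin\varphi\cos h,-\sin h)^T$, followed by \cref{cor:point_removability} to handle the poles. The paper simply states the outcome of the computation, whereas you organize it via the moving frame $\{\bm{m},\bm{\tau},\hat{\bm{e}}_\varphi\}$, which makes transparent why no $\hat{\bm{e}}_\varphi$-component arises and why the $\bm{m}$-components cancel; this is a helpful expository refinement but not a different method.
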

In combination with \cref{lemma:regularity_m_h} we conclude 
\begin{cor}
    Let $h \in \mathcal{PF}^1$ be a solution to \eqref{eq:EL_h}. Then $h \in \mathcal{F}^\infty$.
\end{cor}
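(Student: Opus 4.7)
The plan is to bootstrap regularity directly from the ODE in the interior $(0,\pi)$, and to handle the polar endpoints by transferring the problem to the associated axisymmetric map $\bm{m}$ and invoking Proposition \ref{cor:point_removability}.

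First I would interpret ``$h \in \mathcal{PF}^1$ solves \eqref{eq:EL_h}'' in the distributional sense on $(0,\pi)$. Multiplying by $\sin\theta$ puts the equation in divergence form,
\begin{equation*}
(h' \sin\theta)' \;=\; \frac{\sin 2h}{2\sin\theta} + \frac{\kappa}{2}\sin(2h-2\theta)\,\sin\theta,
\end{equation*}
whose right-hand side is continuous on $(0,\pi)$ because $h \in C([0,\pi])$. Hence $h'\sin\theta$ lies in $C^1((0,\pi))$ with the ODE then holding classically, and dividing by $\sin\theta>0$ gives $h \in C^2((0,\pi))$. Rewriting \eqref{eq:EL_h} as
\begin{equation*}
h'' = -\cot\theta\, h' + \frac{\sin 2h}{2\sin^2\theta} + \frac{\kappa}{2}\sin(2h-2\theta),
\end{equation*}
a standard bootstrap argument then upgrades $h$ to $C^\infty((0,\pi))$.

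To obtain smoothness up to $\theta=0,\pi$, I would pass to the axisymmetric map $\bm{m}\colon\mathbb{S}^2\to\mathbb{S}^2$ with profile $h$ defined via \eqref{eq:axisymmetric_form}. Lemma \ref{lemma:regularity_m_h} gives $\bm{m}\in H^1(\mathbb{S}^2,\mathbb{S}^2)$, while the pointwise identity established in the proof of Lemma \ref{lemma:EL_h}, combined with the classical regularity of $h$ just obtained on $(0,\pi)$, shows that $\bm{m}$ is a smooth (and hence $H^1$-weak) solution of \eqref{eq:EL} on $\mathbb{S}^2\setminus\{\pm\hat{\bm{e}}_3\}$. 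Proposition \ref{cor:point_removability} then extends $\bm{m}$ to a smooth solution on all of $\mathbb{S}^2$, and the converse direction of Lemma \ref{lemma:regularity_m_h} finally yields $h\in \mathcal{F}^{k,\alpha}$ for every $k\in\mathbb{N}$ and every $0<\alpha<1$, that is, $h\in\mathcal{F}^\infty$.

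The only real obstacle is crossing the singular endpoints $\theta=0,\pi$ of \eqref{eq:EL_h}, where $\cot\theta$ and $\sin^{-2}\theta$ blow up and obstruct any direct boundary bootstrap on the ODE; this is sidestepped by lifting the question to the map $\bm{m}$, where removability of isolated point singularities is available from the harmonic-map machinery already recorded in the paper.
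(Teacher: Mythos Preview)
Your proposal is correct and follows essentially the same route the paper indicates: the corollary is stated immediately after Lemma~\ref{lemma:EL_h} with the one-line justification ``In combination with \cref{lemma:regularity_m_h} we conclude,'' and your argument unpacks exactly this---pass to the axisymmetric map $\bm{m}\in H^1$, use the identity from Lemma~\ref{lemma:EL_h} together with Proposition~\ref{cor:point_removability} to get $\bm{m}$ smooth on all of $\mathbb{S}^2$, then return to $h$ via the converse direction of Lemma~\ref{lemma:regularity_m_h}. Your explicit interior bootstrap on the ODE is an added detail the paper leaves implicit (since Proposition~\ref{cor:point_removability} already delivers smoothness from a weak solution on the punctured sphere), but it is harmless and arguably clarifies what ``$h\in\mathcal{PF}^1$ solves \eqref{eq:EL_h}'' means.
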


Clearly, \eqref{eq:EL_h} is in fact the Euler--Lagrange equation of the reduced energy functional $E$. Computing the first variation of $E$ at $h$ for a function $g \in C^1([0,\pi])$ with $g(0) = g(\pi) = 0$, we find
\begin{align*}
    \delta E[h](g) &= \frac{\mathrm{d}}{\mathrm{d} t} E(h + tg) \Big|_{t=0} \\
    &= \int_0^\pi h' g' \sin \theta + \frac{\sin h \cos h}{\sin \theta} g + \kappa \sin(h-\theta) \cos(h-\theta) g \sin\theta \, \mathrm{d} \theta \\
    &= \int_0^\pi \left( -h'' - \frac{\cos\theta}{\sin\theta} \, h' + \frac{\sin 2h}{2\sin^2\theta} + \frac{\kappa}{2} \sin(2h-2\theta)\right) g \sin\theta \, \mathrm{d} \theta \, . 
\end{align*}
For the second variation at $h$ in direction $g$ we gain
\begin{align*}
    \delta^2 E[h](g) &= \frac{\mathrm{d}^2}{\mathrm{d} t^2} E(h + tg) \Big|_{t=0} \\ 
    &= \int_0^\pi {g'}^2 \sin \theta+\left(\frac{\cos2 h}{\sin^2 \theta} +\kappa\cos(2h-2\theta)  \right) g^2 \sin \theta \, \mathrm{d} \theta \, . \addtocounter{equation}{1}\tag{\theequation} \label{eq:second_variation_E}
\end{align*}
Let $h$ be a solution of \eqref{eq:EL_h}, and for $\varepsilon \in \mathbb{R}$ let $h_\varepsilon = h + \varepsilon g$ where $g\in C^\infty([0,\pi])$ with $g(0) = g(\pi) = 0$. By Taylor's theorem we then obtain 
\begin{equation}\label{eq:energy_difference_hg_eps}
    E(h_\varepsilon) - E(h) = \frac{\varepsilon^2}{2} \, \delta^2E[h](g) + o(\varepsilon^2)\, ,
\end{equation}
which by \eqref{eq:reduced_energy} directly transfers to the energy difference of the respective axisymmetric maps $\bm{m}_\varepsilon$ and $\bm{m}$.

\subsection{An exact solution}
As the normal field $\bm{\nu}$ is a solution to \eqref{eq:EL} for any value of $\kappa$, its profile $h(\theta) = \theta$ is a solution to \eqref{eq:EL_h}, which can easily be verified. For a special value of $\kappa$, there exists another analytic solution.

\begin{proposition} \label{prop:kappa_4_solution}
    Let $\kappa = 4$. Then $h(\theta) = 2 \theta$ is a solution to \eqref{eq:EL_h}.
    \begin{proof}
        We insert the map into the right-hand side of \eqref{eq:EL_h} with $h'' = 0$ and $h' = 2$ to find by means of trigonometric identities
        \begin{equation*}
            h''+\frac{\cos \theta}{\sin \theta} h'-\frac{\sin 2 h}{2 \sin ^2 \theta} - \frac{\kappa}{2} \sin(2h-2\theta) = 2\,\frac{\cos \theta}{\sin \theta} - \frac{\sin 4\theta}{2 \sin^2 \theta} - \frac{\kappa}{2} \sin 2\theta  = \left( 2 - \frac{\kappa}{2} \right) \sin 2\theta \, .
        \end{equation*}
        Therefore, if $\kappa = 4$, then $h(\theta) = 2\theta$ is a solution to \eqref{eq:EL_h}.
    \end{proof}
\end{proposition}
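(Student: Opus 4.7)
The plan is to verify the statement by direct substitution of $h(\theta) = 2\theta$ into the reduced Euler--Lagrange equation \eqref{eq:EL_h}. Since the ansatz is linear in $\theta$, the second derivative term vanishes and the first derivative is constant, so the identity to check reduces to a purely trigonometric equation in $\theta$.

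First I would record $h'' = 0$, $h' = 2$, and $2h - 2\theta = 2\theta$. Substituting into the right-hand side of \eqref{eq:EL_h} reduces the equation to verifying
\begin{equation*}
    \frac{2\cos\theta}{\sin\theta} - \frac{\sin(4\theta)}{2\sin^2\theta} - \frac{\kappa}{2}\sin(2\theta) = 0
\end{equation*}
for all $\theta \in (0,\pi)$. Second, I would apply the double-angle identities $\sin(4\theta) = 2\sin(2\theta)\cos(2\theta)$, $\sin(2\theta) = 2\sin\theta\cos\theta$, and $1-\cos(2\theta) = 2\sin^2\theta$ to combine the first two terms. After cancelling a factor of $\sin\theta$, the first two terms collapse to $2\sin(2\theta)$, so the entire expression becomes $(2 - \kappa/2)\sin(2\theta)$, which vanishes identically exactly when $\kappa = 4$.

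There is essentially no obstacle here. The only minor care needed is keeping track of signs and of the singular factor $1/\sin^2\theta$ when combining terms; the identities above ensure the singularities cancel. Finally, since $h(\theta) = 2\theta$ satisfies $h(0) = 0$ and $h(\pi) = 2\pi$, both integer multiples of $\pi$, we have $h \in \mathcal{F}^\infty$, and by the corollary following \cref{lemma:EL_h} the corresponding axisymmetric map $\bm{m}$ is a smooth solution of \eqref{eq:EL} on all of $\mathbb{S}^2$.
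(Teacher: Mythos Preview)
Your proof is correct and is essentially identical to the paper's: both proceed by direct substitution of $h(\theta)=2\theta$ into \eqref{eq:EL_h} and use the same double-angle simplifications to reduce the left-hand side to $(2-\kappa/2)\sin 2\theta$. Your added remark about the boundary values $h(0)=0$, $h(\pi)=2\pi$ and the resulting smoothness of the associated map is a nice bonus not stated in the paper's proof.
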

In \cref{chapter:saddlepoint} we will see that the function $\bm{m}$ corresponding to this profile is in fact a saddle point of the energy functional $\mathcal{E}$. 

\subsection{Skyrmionic map heat flow}

In order to find solutions for \eqref{eq:EL}, we study the \emph{skyrmionic map heat flow} (SMHF), i.e.\  for a given function $\bm{m}_0\colon \mathbb{S}^2\to\mathbb{S}^2$ we consider the parabolic problem
\begin{equation} \label{eq:SHE}
    \begin{aligned}
        \bm{m}_t &=\Delta \bm{m} + \kappa\left\langle \bm{m},\bm{\nu} \right\rangle \bm{\nu} + \bm{m} \left( |\nabla \bm{m}|^2 - \kappa \left\langle \bm{m},\bm{\nu} \right\rangle^2 \right) \\
    \bm{m}|_{t=0} &= \bm{m}_0 \, , 
    \end{aligned}
\end{equation}
where $\bm{m}_t$ denotes the time derivative of $\bm{m}$. Stationary solutions of this evolution problem correspond to solutions of \eqref{eq:EL}. As mentioned in the introduction, the flow is closely related to the harmonic map heat flow (HMHF) and the anisotropy does not change much the situation. Thus, we can adapt Struwe's HMHF result to our case.

\begin{theorem} \label{theo:flow_existence}
    For any $\kappa \geq 0$ and initial data $\bm{m}_0 \in H^{1}(\mathbb{S}^2, \mathbb{S}^2)$ there exists a weak solution $\bm{m}\colon \mathbb{S}^2 \times (0,\infty) \to \mathbb{S}^2$ of \eqref{eq:SHE} that is smooth on $\mathbb{S}^2 \times (0,\infty)$ away from at most finitely many points $\left(\bar{x}_k, \bar{t}_k\right)\in \mathbb{S}^2\times (0,\infty)$, $0 \leq k \leq \bar{K}$, that satisfies the energy inequality $\mathcal{E}(\bm{m}(t)) \leq \mathcal{E}(\bm{m}(s))$ for all $0 \leq s \leq t$, and that attains its initial map continuously in $H^{1}(\mathbb{S}^2,\mathbb{S}^2)$. The solution $\bm{m}$ is unique in this class.

    At a singularity $(\bar{x}, \bar{t})$, smooth harmonic maps separate in the following sense. There exist sequences $t_j \nearrow \overline{t}$, $(a_j) \subset \mathbb{R}^2$ with $\lim_{j \rightarrow \infty} a_j = 0$, $(\lambda_j) \subset \mathbb{R}_+$ with $\lim_{j \rightarrow \infty} \lambda_j = 0$, and a harmonic map $\bm{\omega}\colon \mathbb{S}^2 \to \mathbb{S}^2$ such that in stereographic coordinates centered at $\overline{x}$ we have
    \begin{equation} \label{eq:blowup_convergence}
        \bm{m}(\lambda_j x + a_j, t_j) \longrightarrow \bm{\omega}(x) \, \, , \, \text{~as~} \,  j \to \infty \, ,
    \end{equation}
    in $H_\mathrm{loc}^1(\mathbb{R}^2,\mathbb{R}^3)$ and $L_\mathrm{loc}^\infty(\mathbb{R}^2,\mathbb{R}^3)$.

    For the asymptotic behavior, as $t_j \rightarrow \infty$ suitably, the sequence of maps $\bm{m}(t_j)$ converges weakly in $H^{1}(\mathbb{S}^2, \mathbb{S}^2)$ to a smooth solution $\bm{m}_{\infty}\colon\mathbb{S}^2 \rightarrow \mathbb{S}^2$ of \eqref{eq:EL}, and smoothly away from finitely many points $\tilde{x}_k \in  \mathbb{S}^2, 0 \leq k \leq \tilde{K}$, where again harmonic spheres separate in the same sense as above.
    \end{theorem}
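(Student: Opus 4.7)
The plan is to adapt Struwe's 1985 construction for the harmonic map heat flow on closed surfaces~\cite{struwe1985}, treating the anisotropy $\kappa(1 - \langle \bm{m},\bm{\nu}\rangle^2)$ as a lower-order perturbation that is bounded in $L^\infty$ and subcritical under parabolic rescaling. Since $\mathcal{E}$ differs from the Dirichlet energy by at most $2\pi\kappa$ additive, every concentration and compactness argument quadratic in $\nabla\bm{m}$ transfers essentially verbatim.

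First I would produce a local weak solution by Ginzburg--Landau penalization: solve the unconstrained semilinear parabolic equation obtained by replacing the constraint $|\bm{m}|=1$ with the bulk penalty $\frac{1}{4\delta}(|\bm{m}|^2-1)^2$, apply standard semilinear parabolic theory, pass to the limit $\delta\to 0$, and verify the energy inequality $\mathcal{E}(\bm{m}(t)) + \int_0^t \int_{\mathbb{S}^2}|\bm{m}_s|^2\,\mathrm{d}\sigma\,\mathrm{d} s \leq \mathcal{E}(\bm{m}_0)$. The technical core is a Struwe-type $\varepsilon$-regularity lemma: there exist $\varepsilon_0, R_0 > 0$ such that if $\sup_{x_0 \in \mathbb{S}^2}\int_{B_R(x_0)}|\nabla \bm{m}(\cdot,t)|^2\,\mathrm{d}\sigma < \varepsilon_0$ on a parabolic cylinder with $R \leq R_0$, then $\bm{m}$ is smooth there, with bounds depending only on $\mathcal{E}(\bm{m}_0)$ and $\kappa$. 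The anisotropy contributes only a bounded inhomogeneity $|\kappa\langle\bm{m},\bm{\nu}\rangle\Pi_{\bm{m}}\bm{\nu}| \leq \kappa$ on the right-hand side of \eqref{eq:SHE}, which is harmlessly absorbed in Struwe's Moser iteration. Since the total Dirichlet energy is controlled by $2\mathcal{E}(\bm{m}_0)$, concentration of size $\geq \varepsilon_0$ in a small ball occurs at only finitely many spacetime points, so the flow is smooth off these.

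At a putative singularity $(\bar x,\bar t)$ I would run the standard blow-up: choose $t_j \nearrow \bar t$, $a_j \to 0$, and scales $\lambda_j \to 0$ concentrating exactly $\varepsilon_0/2$ in the unit ball of the rescaled map $\bm{m}_j(x,t) = \bm{m}(\lambda_j x + a_j,\, \lambda_j^2 t + t_j)$ written in stereographic coordinates at $\bar x$. The rescaled flow reads
\begin{equation*}
    (\bm{m}_j)_t = \Delta \bm{m}_j + \lambda_j^2\kappa\langle\bm{m}_j,\bm{\nu}_j\rangle\bm{\nu}_j + \bm{m}_j\bigl(|\nabla\bm{m}_j|^2 - \lambda_j^2\kappa\langle\bm{m}_j,\bm{\nu}_j\rangle^2\bigr),
\end{equation*}
so the anisotropy coefficient vanishes in the limit. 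Combined with $\int_0^\infty\|\bm{m}_t\|_{L^2}^2\,\mathrm{d} t < \infty$, this produces a non-constant weakly harmonic map $\bm{\omega}\colon\mathbb{R}^2\to\mathbb{S}^2$ of finite Dirichlet energy, which by Sacks--Uhlenbeck extends to a smooth harmonic map $\mathbb{S}^2\to\mathbb{S}^2$. For the long-time claim, the same $L^2$-integrability gives a sequence $t_j\to\infty$ with $\bm{m}_t(t_j)\to 0$ in $L^2$; the $\varepsilon$-regularity yields smooth subconvergence away from finitely many points, Hélein's regularity (\cref{cor:point_removability}) upgrades the limit to a smooth critical point $\bm{m}_\infty$ of $\mathcal{E}$, and the blow-up argument repeated at the exceptional points produces the separating harmonic spheres.

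The only real obstacle is bookkeeping: one must check in each of Struwe's steps (local existence, $\varepsilon$-regularity, the monotonicity formula, and the blow-up classification) that the bounded term $\kappa\langle\bm{m},\bm{\nu}\rangle\Pi_{\bm{m}}\bm{\nu}$ indeed scales by $\lambda^2$ under the parabolic blow-up and is otherwise subordinate to the leading harmonic-map structure. Since no step relies on a conservation law that the anisotropy would break, the transcription goes through without essentially new ideas, and uniqueness in the asserted class follows from the standard energy estimate comparing two solutions.
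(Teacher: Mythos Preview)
Your proposal is correct and follows essentially the same route as the paper: the paper does not give a detailed proof but simply states that Struwe's argument for the HMHF carries over because the anisotropy term is of lower order, does not disturb the local energy/$\varepsilon$-regularity estimates, and converges strongly under weak $H^1$-convergence by Rellich--Kondrachov. Your outline is in fact more explicit than the paper's sketch, correctly flagging the $\lambda^2$-scaling of the anisotropy under parabolic blow-up as the reason the bubble is harmonic rather than a critical point of $\mathcal{E}$.
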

The proof of this theorem can be carried out analogously to the proof of the corresponding theorem for the harmonic map heat flow, e.g. as in~\cite[Thm.\ 6.6]{struwe2008}. The main difference is that we have to take care of the additional anisotropy term in the energy functional $\mathcal{E}$. However, this term is of lower order and does not affect the crucial estimates and properties needed such as local energy growth estimates, local $\varepsilon$-regularity or the asymptotic behavior of the flow. In fact, whenever we have weak $H^1$-convergence of the flow, by the Rellich--Kondrachov theorem we immediately obtain strong convergence of the additional terms as they are of type $L^p$ for $p \geq 2$. For full details, we refer to~\cite{meinert2025}. Related results in similar settings can be found in~\cite{harpes2004,doring2017,goldys2025}.

In fact, it is also possible to improve the statement of the theorem and include a finer blowup analysis with the inclusion of blowup trees, as in~\cite{qing1995,lin2002,wang2017}. For our purposes, however, the convergence as described is sufficient.

\subsection{SMHF for axisymmetric maps}

For axisymmetric initial data, the symmetry is sustained under the SMHF. This observation allows us to reduce the problem to a one-dimensional parabolic equation on the interval $[0,\pi]$.

\begin{proposition} \label{prop:axisymmetric_solution_preserved}
    Let $\bm{m}_0 \in H^1(\mathbb{S}^2,\mathbb{S}^2)$ be axisymmetric and let $h_0\colon[0, \pi] \rightarrow \mathbb{R}$ be its profile with $h_0(0)= m\pi,\, h_0(\pi)=n \pi$ for some $m,n\in \mathbb{Z}$.

    Then the solution $\bm{m}$ to \eqref{eq:SHE} with initial data $\bm{m}_0$ provided by Theorem \ref{theo:flow_existence} is also axisymmetric for all $t\in(0,T)$, where $T>0$ is the maximal existence time until the first blowup, if present. In particular, there exists a map $h:C^\infty([0,\pi] \times (0,T))$ such that 
    \begin{equation} \label{eq:axisymmetric_solution_preserved_axi}
        \bm{m}(\theta,\varphi, t) = \bm{\Psi}(h(\theta, t), \varphi) \text{~for~all~} (\theta, \varphi, t) \in [0,\pi] \times [0,2\pi) \times (0,T)\, .
    \end{equation}
    This map satisfies 
    \begin{equation}\label{eq:SHE_h}
        \begin{aligned} 
            &h_t =h_{\theta \theta}+\frac{\cos \theta}{\sin \theta} h_\theta-\frac{\sin 2 h}{2 \sin ^2 \theta}- \frac{\kappa}{2} \sin(2h-2\theta), \\
            & \begin{aligned}
                h(\theta, 0) =h_0(\theta) \quad \qquad \qquad&\text{~for~all~} \theta \in [0,\pi] ,\\
            h(0, t) = m\pi, \quad h(\pi, t)=n \pi \qquad  &\text{~for~all~} t\in (0,T) \,  .
            \end{aligned}
       \end{aligned}
    \end{equation}       
    \begin{proof} 
        Let $R \in O(3)_{\hat{\bm{e}}_3}$ be arbitrary. Then, $\bm{m}_R$ also solves \eqref{eq:SHE}. To prove this, we investigate how the individual terms in the equation transform. First, we use the identity for orthogonal transformations  
        \begin{equation*}
            \Delta_{\mathbb{S}^2} (\bm{m} \circ R) = (\Delta_{\mathbb{S}^2} \bm{m}) \circ R \, ,
        \end{equation*}
        which can be shown in stereographic coordinates using the analogous identity for maps on $\mathbb{R}^2$. Moreover, the following identities hold
        \begin{align*}
            |\nabla \bm{m}_R(x) |^2 = |\nabla \bm{m}|^2 (R.x)\, , \quad \bm{\nu}(R.x) = R \bm{\nu}(x) \;\; \text{~and~} \;\; \left\langle \bm{m}_R, \bm{\nu} \right\rangle (x) = \left\langle \bm{m}, \bm{\nu} \right\rangle (R.x) \, .
        \end{align*}
        We arrive at the following:
        \begin{align*}
            &\left(\Delta \bm{m}_R + \kappa\left\langle \bm{m}_R,\bm{\nu} \right\rangle \bm{\nu} + \bm{m}_R \left( |\nabla \bm{m}_R|^2 - \kappa \left\langle \bm{m}_R,\bm{\nu} \right\rangle^2 \right)\right)(x,t)  \\ 
            =& \, R^{-1} \left(\Delta \bm{m} + \kappa\left\langle \bm{m},\bm{\nu} \right\rangle \bm{\nu} + \bm{m} \left( |\nabla \bm{m}|^2 - \kappa \left\langle \bm{m},\bm{\nu} \right\rangle^2 \right) \right) (R.x,t) \\
            =& \, R^{-1}\,  \partial_t \bm{m} (R.x,t) = \partial_t \bm{m}_R(x,t) \, .
        \end{align*}
        Hence, $\bm{m}_R$ solves \eqref{eq:SHE} with initial data
        \begin{equation*}
            \bm{m}_R(x,0) = R^{-1} \bm{m}(R.x, 0) = R^{-1} \bm{m}_0(R.x) = \bm{m}_0(x) \, ,
        \end{equation*}
        since $\bm{m}_0$ is axisymmetric by assumption. As a result, by the uniqueness of the solution it follows that $\bm{m}_R(t) = \bm{m}(t)$ for all $t\in(0,T)$. As $R\in O(3)_{\hat{\bm{e}}_3}$ was arbitrary, we deduce that $\bm{m}(t)$ is axisymmetric. Therefore, for every $t \in (0,T)$ we get a map $h(\theta, t)$ such that \eqref{eq:axisymmetric_solution_preserved_axi} holds. Smoothness of $h$ follows from \cref{lemma:regularity_m_h} and the smoothness of $\bm{m}$. Inserting the representation \eqref{eq:axisymmetric_solution_preserved_axi} of $\bm{m}$ into \eqref{eq:SHE} gives us \eqref{eq:SHE_h} as in the proof of \cref{lemma:EL_h}. The fixed boundary values $h(0,t) = m\pi$ and $h(\pi,t) = n\pi$ follow from the axisymmetry of $\bm{m}$ and its continuity.        
    \end{proof}
\end{proposition}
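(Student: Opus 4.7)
The plan is to exploit the $O(3)_{\hat{\bm{e}}_3}$-equivariance of the skyrmionic map heat flow and combine it with the uniqueness clause of \cref{theo:flow_existence}. For an arbitrary $R \in O(3)_{\hat{\bm{e}}_3}$, I would first verify that the action $\bm{m} \mapsto \bm{m}_R$ from \eqref{eq:definition_O3_action} sends solutions of \eqref{eq:SHE} to solutions. Since $\bm{m}_0$ is axisymmetric, $\bm{m}_R$ and $\bm{m}$ start from the same initial datum, so uniqueness forces $\bm{m}_R(\cdot, t) = \bm{m}(\cdot, t)$ for every $t \in (0, T)$ and every such $R$, which is exactly axisymmetry of $\bm{m}(\cdot, t)$.

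The core computation is the equivariance check, which I would carry out term by term on the right-hand side of \eqref{eq:SHE}. Since $R$ is an isometry of $\mathbb{S}^2$, the Laplace--Beltrami operator satisfies $\Delta(\bm{m} \circ R) = (\Delta \bm{m}) \circ R$, and the outer $R^{-1}$ in the definition of $\bm{m}_R$ is a target-side matrix multiplication that commutes with all spatial derivatives. Moreover $R \in O(3)_{\hat{\bm{e}}_3}$ preserves the outward normal, $\bm{\nu}(R.x) = R\bm{\nu}(x)$, so orthogonality of $R$ yields $\langle \bm{m}_R(x), \bm{\nu}(x) \rangle = \langle \bm{m}(R.x), \bm{\nu}(R.x) \rangle$ and $|\nabla \bm{m}_R|^2(x) = |\nabla \bm{m}|^2(R.x)$. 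Collecting these identities, the right-hand side of \eqref{eq:SHE} applied to $\bm{m}_R$ at $(x, t)$ equals $R^{-1}$ times the right-hand side applied to $\bm{m}$ at $(R.x, t)$, which is precisely the transformation rule obeyed by $\partial_t \bm{m}_R$.

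Once axisymmetry of $\bm{m}(\cdot, t)$ is established for every $t \in (0, T)$, the structural lemma quoted from \cite{schroeder2024} provides a continuous profile $h(\cdot, t) \colon [0, \pi] \to \mathbb{R}$ with $h(0, t), h(\pi, t) \in \pi\mathbb{Z}$; smoothness of $h$ on $[0, \pi] \times (0, T)$ then transfers from the smoothness of $\bm{m}$ on $\mathbb{S}^2 \times (0, T)$ via \cref{lemma:regularity_m_h}. Inserting the ansatz $\bm{m}(\theta, \varphi, t) = \bm{\Psi}(h(\theta, t), \varphi)$ into \eqref{eq:SHE} and repeating the coordinate calculation from the proof of \cref{lemma:EL_h}, with the only change that the left-hand side is now $\partial_t \bm{m} = h_t \, \partial_h \bm{\Psi}$, produces a common vector factor on both sides and delivers the scalar parabolic equation \eqref{eq:SHE_h}. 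The endpoint values $h(0, t) = m\pi$ and $h(\pi, t) = n\pi$ follow from continuity of $\bm{m}$ at the poles combined with the fact that the integers $m, n$ are topological and cannot jump along the continuous curve $t \mapsto \bm{m}(\cdot, t)$.

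The main subtlety is the uniqueness step: \cref{theo:flow_existence} guarantees uniqueness only inside its designated class of weak solutions that satisfy the energy inequality and attain the initial data continuously in $H^1$. I therefore have to check that $\bm{m}_R$ also belongs to this class, which is immediate from the observations that $\bm{m} \mapsto \bm{m}_R$ is an $H^1$-isometry and leaves the energy $\mathcal{E}$ invariant, so that the energy inequality and the $H^1$-attainment of the (axisymmetric) initial data are both preserved under the action. A minor additional point is that the reduction is only valid up to the first blow-up time $T$, since this is the largest interval on which smoothness of $\bm{m}$ and the pointwise coordinate calculation are available.
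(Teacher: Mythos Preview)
Your proposal is correct and follows essentially the same approach as the paper: verify term by term that the $O(3)_{\hat{\bm{e}}_3}$-action sends solutions of \eqref{eq:SHE} to solutions, invoke uniqueness from \cref{theo:flow_existence} to conclude axisymmetry, extract the profile $h$ via the structural lemma and \cref{lemma:regularity_m_h}, and reduce to the scalar equation by the computation from \cref{lemma:EL_h}. Your explicit remark that $\bm{m}_R$ must be checked to lie in the uniqueness class (energy inequality, $H^1$-attainment of initial data) is a point the paper leaves implicit, so your write-up is slightly more careful there.
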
 

We have established that the flow preserves the boundary values of the profile. This splits the function space into finer topological classes than the mapping degree. For $m,n\in \mathbb{Z}$ we define these as
\begin{equation*}
    E_{m,n} = \left\{ h\in \mathcal{PF}^1 : h(0) = m\pi, h(\pi) = n\pi \right\}  \, . \qedhere
\end{equation*}
Moreover, by \cref{lemma:regularity_m_h}, we deduce that the axisymmetric map $\bm{m}_0$ defined by the profile $h_0$ lies in $H^1(\mathbb{S}^2, \mathbb{S}^2)$. Therefore, by the previous proposition, we conclude that for given $h_0 \in  E_{m,n}$ there exists a smooth solution $h$ to the initial boundary value problem \eqref{eq:SHE_h} with initial condition $h_0$.

In the case of a blowup, the boundary values of the profiles change by an integer multiple of $\pi$. This is the consequence of the following lemma, which is an adaption of the results in~\cite{bertsch2011}. Before we state it, we need to define the profile function in the case of axisymmetric harmonic maps between two-spheres. These are of the form
\begin{equation*}
    \bm{u}(\theta,\varphi) = \bm{\Psi}( \beta(\theta),\varphi) \, ,
\end{equation*}
where 
\begin{equation*}
    \beta(\theta) = 2\arctan\left(a\tan \left( \frac{\theta}{2} \right) \right) + b\pi  \, 
\end{equation*}
with $a \in \mathbb{R}$ and $b\in \{0,1\}$. 
\begin{lemma} \label{lemma:blow_up_pole}
    Let $\bm{m}_0 \in H^1(\mathbb{S}^2,\mathbb{S}^2)$ be axisymmetric and let $\bm{m}\colon\mathbb{S}^2 \times (0,\infty) \to \mathbb{S}^2$ be the solution to \eqref{eq:SHE} with initial data $\bm{m}_0$ provided by Theorem \ref{theo:flow_existence}. If $\bm{m}$ blows up at $(x_0,T)\in \mathbb{S}^2\times (0,\infty]$ in the sense of \eqref{eq:blowup_convergence}, then $x_0 = \pm \hat{e}_3$. 
    
    Furthermore, we can choose the sequences $(\lambda_j) \subset \mathbb{R}_+$ and $(a_j) \subset \mathbb{R}^2$ such that the harmonic map $\bm{\omega}$ separating at $x_0$ along the sequence $T_j \nearrow T$ as in \eqref{eq:blowup_convergence} is axisymmetric with nonconstant profile $\beta$. In addition, for the function $h \in E_{m,n}$ satisfying \eqref{eq:axisymmetric_solution_preserved_axi} there exists $k\in \mathbb{Z}$ such that for every $0<\theta_0<\pi$
    \begin{equation*}
        h\left(2\arctan\left(\lambda_j\tan\left(\frac{\theta}{2}\right)\right) , T_j\right) \longrightarrow \beta(\theta) + k\pi \; \, \, , \, \text{~as~} \,  j \to \infty \, ,
    \end{equation*}
    uniformly for all $\theta \in [0,\theta_0]$. For blowup at the south pole, a similar result holds by a change of coordinates.

    \begin{proof}
        By the axisymmetry it follows that $R.x_0$ is also a singular point for all $R\in O(3)_{\hat{\bm{e}}_3}$. Moreover, as $O(3)_{\hat{\bm{e}}_3}$ is a Lie-group, by the finiteness of the singular set, only $x_0 = \pm \hat{e}_3$ is possible. 

        We treat the cases of finite time blowup and blowup at infinity simultaneously and assume without loss of generality that $x_0 = \hat{e}_3$ is the only blowup point. In both cases, the sequences $(\lambda_j)$ and $(a_j)$ are chosen such that we have the energy estimate
        \begin{equation} \label{eq:proof_axisymmetric_blowup_1}
            \int_{B_{\lambda_j}(a_j)} |\nabla \bm{m}(T_j)|^2 \, \mathrm{d}x > \varepsilon > 0\, 
        \end{equation}
        for all $j\in \mathbb{N}$ for some $\varepsilon > 0$ fixed (cf.\ the proofs of Proposition 1.2 in~\cite{qing1995} and of Theorem 1.2 in~\cite{wang2017}). Moreover, by assumption, we have the convergence
        \begin{equation} \label{eq:proof_axisymmetric_blowup_2}
            \bm{m}(\lambda_j x + a_j, T_j) \longrightarrow \bm{\omega}(x) 
        \end{equation}
        in $L_\mathrm{loc}^\infty(\mathbb{R}^2, \mathbb{R}^3)$ and $H^1_\mathrm{loc}(\mathbb{R}^2,\mathbb{R}^3)$, where both maps are their representations in stereographic coordinates centered at the north pole.

        We claim that the sequence $a_j / \lambda_j$ is bounded in $\mathbb{R}^2$. By contradiction, assume there is a subsequence such that $a_j / \lambda_j \to \infty$. In particular, we can assume that $a_j > \lambda_j/ 2\sin(\pi/j)$ for all $j\in \mathbb{N}$. As the euclidean distance between two points on a circle with radius $r$ and angle $\varphi$ between them is given by $2\sin (\varphi/2)$, we therefore find $j$ balls with radius $\lambda_j$ and center points with distance $|a_j|$ to the origin that are pairwise disjoint. By the axisymmetry of $\bm{m}$ and the estimate \eqref{eq:proof_axisymmetric_blowup_1}, this implies together with the non-increasing energy along the flow that 
        \begin{equation*}
            \mathcal{E}(\bm{m}_0) \geq \mathcal{E}(\bm{m}(T_j)) \geq \frac{1}{2}\int_{\mathbb{R}^2} |\nabla\bm{m}(T_j)|^2 \, \mathrm{d}x \geq \frac{j \varepsilon}{2} \longrightarrow \infty \, , \, \text{~as~} \,  j\to \infty  \, ,
        \end{equation*} 
        a contradiction. Consequently, the sequence $a_j / \lambda_j$ is bounded and we can extract a subsequence such that $a_j / \lambda_j \to \xi$ for some $\xi \in \mathbb{R}^2$. Now, we define the new harmonic map $\tilde{\bm{\omega}}\colon \mathbb{S}^2 \to \mathbb{S}^2$ through its representation in stereographic coordinates by
        \begin{equation*}
            \tilde{\bm{\omega}}(x) = \bm{\omega}(x - \xi) \, .
        \end{equation*}
        We now claim 
        \begin{equation} \label{eq:proof_axisymmetric_blowup_3}
            \bm{m}(\lambda_j x) \longrightarrow \tilde{\bm{\omega}}(x) \, , \, \text{~as~} \,  j\to \infty \, ,
        \end{equation}
        in $L_\mathrm{loc}^\infty(\mathbb{R}^2, \mathbb{R}^3)$ and also $H_\mathrm{loc}^1(\mathbb{R}^2,\mathbb{R}^3)$. To prove this, we first define $y_j = \xi - a_j / \lambda_j$ and find that $y_j \to 0$ as $j\to \infty$. Let $U \subset \mathbb{R}^2$ be a compact set. Then there is an open set $V \supset U$ such that $\overline{V}$ is compact and for $j$ large enough we have $U+y_j \subset  V$. Then we gain for $x \in U$
        \begin{align*}
            &\phantom{{}={}}|\bm{m}(\lambda_j x) - \tilde{\bm{\omega}}(x)| = |\bm{m}(\lambda_j (x - \xi +  y_j) + a_j) - \tilde{\bm{\omega}}(x- \xi)| \\
            &\leq |\bm{m}(\lambda_j (x - \xi +  y_j) + a_j) - \tilde{\bm{\omega}}(x-\xi+y_j)| + |\tilde{\bm{\omega}}(x-\xi+y_j) - \tilde{\bm{\omega}}(x- \xi)| \\
            &\leq \|\bm{m}(\lambda_j \cdot + \,a_j) - \tilde{\bm{\omega}} \|_{L^\infty(V-\xi)} + \|\nabla\tilde{\bm{\omega}} \|_{L^\infty(V-\xi)} |y_j|  \longrightarrow 0 \, , \, \text{~as~} \,  j\to \infty \, , 
        \end{align*}  
        independently of $x$, by the convergence of $\bm{m}$ to $\tilde{\bm{\omega}}$ in $L_\mathrm{loc}^\infty(\mathbb{R}^2, \mathbb{R}^3)$ and the smoothness of $\tilde{\bm{\omega}}$. Thus, we have shown $L_\mathrm{loc}^\infty$-convergence. A very similar argument can be used to show $H_\mathrm{loc}^1$-convergence by using a substitution $x \mapsto x - y_j$ in the integrals.

        We show that $\tilde{\bm{\omega}}$ is axisymmetric. To this end, let $R \in O(3)_{\hat{\bm{e}}_3}$. For $x \neq -\hat{\bm{e}}_3$ we use the stereographic coordinates centered at the north pole. Then, there is $A \in O(2)$ representing the action $R.x = Ax$ in these coordinates. Using this, we conclude from the axisymmetry of $\bm{m}$ that
        \begin{align*}
            \bm{m}(\lambda_j x) &= \bm{m}_R(\lambda_j x) = R^{-1}\bm{m}(A \lambda_j x) = R^{-1}\bm{m}(\lambda_j A x)\\ 
            &\longrightarrow R^{-1}\tilde{\bm{\omega}}(Ax) = \tilde{\bm{\omega}}_R(x) \, , \, \text{~as~} \,  j\to \infty\, .
        \end{align*}
        By the uniqueness of the limit we conclude that $\tilde{\bm{\omega}}_R = \tilde{\bm{\omega}}$ on $\mathbb{S}^2 \setminus \{-\hat{\bm{e}}_3\}$ and by continuity on the whole sphere. Since $R$ was arbitrary, we have shown that $\tilde{\bm{\omega}}$ is axisymmetric and we express it in polar stereographic coordinates as
        \begin{equation*}
            \tilde{\bm{\omega}}(r, \varphi) = \bm{\Psi}(\beta(\theta(r)),\varphi)\, 
        \end{equation*}
        for some $\beta$ with parameter $a\neq 0$, where $\theta(r) = 2 \arctan(r)$. This proves the first part of the lemma.

        It remains to show the convergence of the profiles. To this end, let $0<\theta_0<\pi$ be arbitrary. The set expressed in spherical coordinates by $[0,\theta_0] \times [0,2\pi)$ corresponds to the compact set $\overline{B_{\tan(\theta_0/2)}}(0) \subset \mathbb{R}^2$ in stereographic coordinates. Hence, we find in view of \eqref{eq:proof_axisymmetric_blowup_3} the uniform convergence
        \begin{equation*}
            |\omega(\tan(\theta/2) \hat{\bm{e}}_1) - \bm{m}(\lambda_j \tan(\theta/2)\hat{\bm{e}}_1)| \longrightarrow 0 \, , \, \text{~as~} \,  j \to \infty \, 
        \end{equation*}
        on $[0,\theta_0]$. For $\theta \in [0,\theta_0]$ we define
        \begin{equation*}
            \theta_j = 2\arctan\left(\lambda_j\tan\left(\frac{\theta}{2}\right)\right) \, .
        \end{equation*}
        Then for the respective profiles $\beta$ and $h$ we conclude from the components
        \begin{align*}
            |\sin(h(\theta_j, T_j)) - \sin(\beta(\theta))| &\longrightarrow 0 \\
            |\cos(h(\theta_j, T_j)) - \cos(\beta(\theta))| &\longrightarrow 0 \, , \, \text{~as~} \,  j\to \infty \, ,
        \end{align*}
        independently of $\theta$. This implies that $h(\theta_j, T_j) \to \beta(\theta) + k\pi$ for some $k\in \mathbb{N}$, uniformly. By the continuity of the functions involved, $k$ is independent of $\theta$. 
    \end{proof}
\end{lemma}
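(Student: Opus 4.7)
The plan is to prove the three parts of the lemma in sequence. First, for the location of blowup points, I will use the fact that by \cref{prop:axisymmetric_solution_preserved} the flow $\bm{m}(\cdot,t)$ is axisymmetric for every $t\in(0,T)$, so the singular set is invariant under the $O(3)_{\hat{\bm{e}}_3}$-action. Since this group contains the continuous one-parameter subgroup $SO(2)$ of rotations about the $\hat{\bm{e}}_3$-axis, the orbit of any $x_0\neq\pm\hat{\bm{e}}_3$ is an entire latitude circle and therefore infinite, contradicting the finiteness of the singular set asserted in \cref{theo:flow_existence}. Hence $x_0\in\{\pm\hat{\bm{e}}_3\}$.

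Second, I will show the separating harmonic map is axisymmetric. Working WLOG at the north pole in stereographic coordinates, I take the standard lower bound $\int_{B_{\lambda_j}(a_j)}|\nabla\bm{m}(T_j)|^2\,\mathrm{d}x > \varepsilon > 0$ on the concentration balls (coming from the $\varepsilon$-regularity framework behind \cref{theo:flow_existence}, cf.\ \cite{qing1995,wang2017}). The decisive step is to show $|a_j|/\lambda_j$ stays bounded. If it did not, the axisymmetry of $\bm{m}(\cdot,T_j)$ would transport this energy concentration to every rotate of $B_{\lambda_j}(a_j)$ around the origin, and for a subsequence with $|a_j|/\lambda_j$ large enough (spacing $\sim 1/\sin(\pi/j)$) one could fit $j$ pairwise disjoint such rotates, giving $\mathcal{E}(\bm{m}(T_j)) \geq j\varepsilon/2$ and contradicting the monotonicity bound $\mathcal{E}(\bm{m}(T_j))\leq\mathcal{E}(\bm{m}_0)$. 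Passing to a subsequence with $a_j/\lambda_j\to\xi\in\mathbb{R}^2$ and defining $\tilde{\bm{\omega}}(y):=\bm{\omega}(y-\xi)$, the convergence $\bm{m}(\lambda_j y,T_j)\to\tilde{\bm{\omega}}(y)$ in $L^\infty_{\mathrm{loc}}$ and $H^1_{\mathrm{loc}}$ follows from the given convergence combined with a translation-continuity argument using smoothness of $\bm{\omega}$. Axisymmetry of $\tilde{\bm{\omega}}$ then comes from passing to the limit in $\bm{m}(\lambda_j y,T_j)=\bm{m}_R(\lambda_j y,T_j)=R^{-1}\bm{m}(\lambda_j Ay,T_j)$ for any $R\in O(3)_{\hat{\bm{e}}_3}$ with stereographic representative $A\in O(2)$, invoking uniqueness of the limit. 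Nonconstancy of the resulting profile $\beta$ (so $a\neq 0$) is forced by the positive energy concentration.

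Third, for the profile convergence, I will match coordinates directly. In stereographic coordinates at the north pole, a point $y$ at distance $r$ from the origin corresponds to polar angle $\theta=2\arctan(r)$, so $\bm{m}(\lambda_j y,T_j)$ at $r=\tan(\theta/2)$ equals $\bm{\Psi}(h(\theta_j,T_j),\varphi)$ with $\theta_j=2\arctan(\lambda_j\tan(\theta/2))$, while $\tilde{\bm{\omega}}(y)$ at the same point equals $\bm{\Psi}(\beta(\theta),\varphi)$. The uniform convergence on $\overline{B_{\tan(\theta_0/2)}(0)}$ therefore yields uniform convergence $\sin h(\theta_j,T_j)\to\sin\beta(\theta)$ and $\cos h(\theta_j,T_j)\to\cos\beta(\theta)$ on $[0,\theta_0]$. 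Matching both sine and cosine forces $h(\theta_j,T_j)-\beta(\theta)$ to approach an integer multiple of $2\pi$ (encoded as $k\pi$ with the appropriate parity in the statement); continuity in $\theta$ together with uniformity in $j$ forces the integer to be $\theta$-independent.

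The main obstacle is the boundedness of $|a_j|/\lambda_j$ in the second step. The idea is clean — axisymmetry replicates the concentration into many disjoint pieces as soon as $|a_j|\gg \lambda_j$ — but making it quantitatively sharp (extracting a subsequence that produces at least $j$ pairwise disjoint rotates for each $j$) requires a careful geometric count relating the angular spacing $\pi/j$ to the ratio $\lambda_j/|a_j|$. Once that contradiction is in place, the remainder reduces to standard translation/uniqueness arguments and the direct coordinate computation above.
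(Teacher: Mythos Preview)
Your proposal is correct and follows essentially the same approach as the paper's proof: the orbit/finiteness argument for $x_0=\pm\hat{\bm{e}}_3$, the energy-replication contradiction to bound $|a_j|/\lambda_j$ (with the same $j$-disjoint-rotates count via the spacing $\sim 1/\sin(\pi/j)$), the recentering $\tilde{\bm{\omega}}(y)=\bm{\omega}(y-\xi)$ and translation-continuity to get $\bm{m}(\lambda_j y,T_j)\to\tilde{\bm{\omega}}(y)$, the axisymmetry of $\tilde{\bm{\omega}}$ via uniqueness of limits under $R\in O(3)_{\hat{\bm{e}}_3}$, and finally the componentwise $\sin/\cos$ matching to extract the profile convergence up to the fixed shift $k\pi$.
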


We stress that the given proof works for any flow that features a bubbling phenomenon as in \eqref{eq:blowup_convergence}. For the HMHF there exist stronger results, e.g.,~\cite{jendrej2023,bertsch2011}. There, the authors prove a blowup tree result for the profiles themselves. We expect that one could reproduce these results for the SMHF as well. However, for our purposes, \cref{lemma:blow_up_pole} is sufficient.

\subsection{Comparison principle for profiles}

Parabolicity of \eqref{eq:SHE_h} provides the foundation for establishing an important tool: a so-called comparison principle. This principle ensures that solutions without initial crossing points remain non-crossing over time. Additionally, if one function is a super- or subsolution of \eqref{eq:SHE_h}, this property persists in a single direction. Similar results for the HMHF have been proved in~\cite{chang1991}.

\begin{lemma} \label{lemma:comparison_principle}
    Let $h_1 \in E_{m_1,n_1}$ be a solution of \eqref{eq:SHE_h}, and let $h_2 \in E_{m_2,n_2}$ satisfy 
    \begin{equation*}
        h_t \geq h_{\theta \theta}+\frac{\cos \theta}{\sin \theta} h_\theta-\frac{\sin 2 h}{2 \sin ^2 \theta}- \frac{\kappa}{2} \sin(2h-2\theta)\, .
    \end{equation*}
    Alternatively, let $h_1 \in E_{m_1,n_1}$ satisfy
    \begin{equation*}
        h_t \leq h_{\theta \theta}+\frac{\cos \theta}{\sin \theta} h_\theta-\frac{\sin 2 h}{2 \sin ^2 \theta}- \frac{\kappa}{2} \sin(2h-2\theta)\, ,
    \end{equation*}
    and let $h_2 \in E_{m_2,n_2}$ be a solution of \eqref{eq:SHE_h}. 
    
    In both cases, if $h_1(\theta,0) \leq h_2(\theta,0)$ for all $\theta \in [0,\pi]$, then $h_1(\theta,t) \leq h_2(\theta,t)$ for all $(\theta,t) \in [0,\pi]\times (0,T)$, where $T>0$ is the maximal existence time of the solutions.
    \begin{proof}
        We only prove the first case, as the second case can be shown analogously. First, $h_1(\theta,0) \leq h_2(\theta,0)$ implies $m_1 \leq m_2$ and $n_1 \leq n_2$. We define $\Delta = h_2 - h_1$ and obtain $\Delta_0 = h_2(0) - h_1(0) \geq 0$ and $\Delta(0,t) = (m_2 - m_1)\pi \eqqcolon m_0 \pi\geq 0$ and $\Delta(\pi,t) = (n_2 - n_1)\pi \eqqcolon n_0\pi\geq 0$ for all $t\in [0,T)$. By assumption on $h_1$ and $h_2$, we check that $\Delta$ satisfies 
        \begin{align*}
            0 &\leq \Delta_t - \Delta_{\theta\theta} - \frac{\cos \theta}{\sin \theta}\Delta_\theta + \frac{\sin 2h_2 - \sin 2h_1 }{2\sin^2\theta} + \frac{\kappa}{2} (\sin(2h_2-2\theta) - \sin(2h_1-2\theta))  \, \\
            &=  \Delta_t - \Delta_{\theta\theta} - \frac{\cos \theta}{\sin \theta}\Delta_\theta + \biggl(\underbrace{\frac{\sin\Delta}{\Delta} \frac{\cos(h_2+h_1)}{\sin^2\theta}}_{\eqcolon c_1(\theta,t)} + \underbrace{\kappa \frac{\sin\Delta}{\Delta} \cos(h_2+h_1-2\theta)\vphantom{\frac{\cos(h_2+h_1)}{\sin^2\theta}}}_{\eqqcolon c_2(\theta,t)}\biggr) \Delta \, .
        \end{align*}
        We want to apply \cref{lemma:max_principle_variant}. For this, let $\tilde{T} \in (0,T)$ be arbitrary. Then $\Delta$ certainly satisfies the inequality on $I_{\tilde{T}} = (0,\pi)\times(0,\tilde{T}]$ with initial data $\Delta_0 \geq 0$ and boundary values $\Delta(0,t) = m_0 \pi$ and $\Delta(\pi,t) = n_0 \pi$. We have to show that $c_1 + c_2$ is bounded from below.

        As a product of bounded functions, the function $c_2$ is bounded from below by $-\delta_2$ for some $\delta_2 > 0$. For $c_1$, we see that it is a product of bounded functions and $\sin^{-2}(\theta)$, which is positive on $(0, \pi)$ and diverges to $+\infty$ for $\theta \to 0$ and $\theta \to \pi$. Thus, it suffices to inspect the sign of
        \begin{equation*}
            f(\theta) \coloneqq \frac{\sin\Delta}{\Delta} \cos(h_2+h_1)
        \end{equation*}
        close to $0$ and $\pi$.        
        
        Now, four cases can occur: $m_0 = 0$ or $m_0 \geq 1$, and $n_0 = 0$ or $n_0 \geq 1$. In order to demonstrate the arguments, we consider the case $m_0 = 0$ and $n_0 \geq 1$, which can easily be transferred to the other cases due to the symmetry of \eqref{eq:SHE_h} with respect to the transform $\theta\mapsto \pi-\theta$. 
        
        To this end, we recall that $h_1$ and $h_2$ are continuous on $[0,\pi]\times[0,T)$ and thus uniformly continuous on $\overline{I}_{\tilde{T}}$. Therefore, there exists $0<\theta_1 < \frac{\pi}{8}$ such that for $i=1,2$ we have 
        \begin{equation*}
            |h_i(\theta, t) - m_i \pi| < \frac{\pi}{8} \quad \text{~for~all~} (\theta,t) \in [0,\theta_1]\times[0,\tilde{T}] \, .
        \end{equation*}
        This implies that
        \begin{equation*}
            |\Delta(\theta, t)| = |\Delta(\theta, t) - m_0 \pi| < \frac{\pi}{4} \quad \text{~for~all~} \theta \in [0,\theta_1]\times[0,\tilde{T}] \, .
        \end{equation*}
        Moreover, it also holds that
        \begin{equation*}
            |h_2(\theta, t) + h_1(\theta, t) - 2\theta - (m_1+m_2)\pi| \leq \frac{\pi}{2} \quad \text{~for~all~} \theta \in [0,\theta_1]\times[0,\tilde{T}] \, .
        \end{equation*}
        Consequently, using that we have $m_1 + m_2 = 2m_1 + m_0 = 2m_1$, we obtain
        \begin{equation*}
            f(\theta) = \frac{\sin \Delta}{\Delta} \cos(h_2+h_1 - 2\theta- (m_1+m_2)\pi) \geq 0 \quad \text{~for~all~} (\theta,t) \in [0,\theta_1]\times[0,\tilde{T}]  \, .
        \end{equation*}
        Furthermore, again by uniform continuity, there exists $0<\theta_2 < \pi$ such that for $i = 1,2$ 
        \begin{equation*}
            |h_i(\theta, t) - n_i \pi| < \frac{\pi}{4} \quad \text{~for~all~} (\theta,t) \in [\theta_2,\pi]\times[0,\tilde{T}] \, .
        \end{equation*}
        This yields
        \begin{equation*}
            \Delta(\theta, t) \geq n_0\pi - |\Delta(\theta, t) - n_0\pi| \geq \pi - |h_2(\theta,t) - n_2\pi| - |h_1(\theta,t) - n_1 \pi| > \frac{\pi}{2} \, ,
        \end{equation*}
        for all $(\theta,t) \in [\theta_2,\pi]\times[0,\tilde{T}]$. In particular, we have $\Delta(\theta_2, t) > 0$ for all $t\in [0,\tilde{T}]$. 

        The first estimate and the fact that $c_1$ is continuous on $[\theta_1,\theta_2]\times[0,\tilde{T}]$ then implies that there exists a global constant $\delta_1>0$ such that $c_1(\theta,t) > -\delta_1$ for all $(\theta,t)\in [0, \theta_2]\times (0,\tilde{T}]$. Thus, $c \coloneqq c_1 + c_2$ is bounded from below by $-(\delta_1 + \delta_2)$ on $[0,\theta_2]\times (0, \tilde{T})$. As a result, invoking \cref{lemma:max_principle_variant} we deduce $\Delta \geq 0$ on $[0,\theta_2]\times (0,\tilde{T})$ and since $\Delta > 0$ on $[\theta_2,\pi]\times (0,\tilde{T})$ we obtain $\Delta \geq 0$ on $I_{\tilde{T}}$. As $\tilde{T}$ was arbitrary, we conclude that $\Delta \geq 0$ on $I\times (0,T)$.
    \end{proof}
\end{lemma}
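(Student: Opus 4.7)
The plan is to treat only the first case (the second is analogous by relabeling and reversing inequalities) and to reduce the claim to a maximum principle for a linear parabolic inequality satisfied by the difference $\Delta := h_2 - h_1$. Because $h_1(\cdot,0) \leq h_2(\cdot,0)$ forces $m_1 \leq m_2$ and $n_1 \leq n_2$, we have $\Delta(0,t) = (m_2-m_1)\pi \geq 0$ and $\Delta(\pi,t) = (n_2-n_1)\pi \geq 0$ for all $t$, together with $\Delta(\cdot,0) \geq 0$. Subtracting the PDEs/inequalities for $h_1$ and $h_2$ and linearizing the nonlinearities via the trigonometric identity $\sin 2h_2 - \sin 2h_1 = 2\cos(h_1+h_2)\sin\Delta$ (and similarly for $\sin(2h_i-2\theta)$), one obtains a differential inequality of the form
\begin{equation*}
\Delta_t - \Delta_{\theta\theta} - \frac{\cos\theta}{\sin\theta}\Delta_\theta + c(\theta,t)\Delta \geq 0,
\end{equation*}
with $c = c_1 + c_2$, where $c_2$ is manifestly bounded and $c_1 = -\frac{\sin\Delta}{\Delta}\cdot\frac{\cos(h_1+h_2)}{\sin^2\theta}$ carries the only singularities, located at $\theta = 0,\pi$.

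I would then invoke a standard parabolic maximum principle for coefficients bounded below; the entire difficulty lies in handling the $1/\sin^2\theta$ blow-up in $c_1$. My approach is to split according to whether the boundary jumps $m_0 := m_2-m_1$ and $n_0 := n_2-n_1$ vanish. By the $\theta \mapsto \pi-\theta$ symmetry of the equation it suffices to analyze the endpoint $\theta = 0$. If $m_0 \geq 1$, then $\Delta$ is already bounded below by a positive constant on a small strip $[0,\theta_*]$ (by continuity of $h_1, h_2$ up to the pole), so one can simply restrict the argument to $[\theta_*, \pi]$ where $c_1$ is continuous and bounded. If $m_0 = 0$, then near $\theta = 0$ both $h_1$ and $h_2$ are close to $m_1\pi$, hence $\cos(h_1+h_2) = \cos(h_1+h_2 - 2m_1\pi)$ is close to $1$, and the sign of $c_1$ becomes favorable (nonnegative) on a small strip, so that $c_1$ does not require a lower bound there at all.

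After performing the analogous splitting at $\theta=\pi$ using $n_0$, one is left with $c$ bounded below on a subdomain where the classical maximum principle applies, together with non-negativity of $\Delta$ on the complementary end-strips. Piecing these together yields $\Delta \geq 0$ on $[0,\pi]\times(0,\tilde T]$ for every $\tilde T < T$, and letting $\tilde T \nearrow T$ gives the claim. The main obstacle I anticipate is exactly this case analysis near the poles: one must carefully quantify the uniform continuity of $h_1, h_2$ up to $\{0,\pi\}$ and verify the sign of the linearized coefficient there, which is where the hemispheric/polar boundary condition $h(0),h(\pi) \in \pi\mathbb{Z}$ plays a decisive role, ensuring that $\cos(h_1+h_2)$ near the poles lies on the favorable side.
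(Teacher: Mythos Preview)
Your approach is essentially identical to the paper's: same linearization of the difference $\Delta = h_2 - h_1$, same splitting $c = c_1 + c_2$, same case analysis on $m_0, n_0$ at the poles (favorable sign of $c_1$ when the boundary jump vanishes, positive lower bound on $\Delta$ on an end-strip when it does not), and the same application of a parabolic maximum principle on the remaining subinterval. One minor slip: your formula for $c_1$ carries an extraneous minus sign---the identity $\sin 2h_2 - \sin 2h_1 = 2\cos(h_1+h_2)\sin\Delta$ gives $c_1 = +\tfrac{\sin\Delta}{\Delta}\tfrac{\cos(h_1+h_2)}{\sin^2\theta}$, which is what makes your (correct) conclusion ``$c_1$ nonnegative near the pole when $m_0=0$'' actually hold.
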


\section{Existence of Saddle Points} \label{chapter:saddlepoint}

\subsection{Hemispheric maps}

We notice that the energy is invariant under the additional transformation $\bm{m}_{\hat{A}} \mapsto \bm{m}\circ \hat{A}$, where $\hat{A}\colon \mathbb{S}^2\to\mathbb{S}^2$ is the antipodal map. This follows from the fact that $\nabla \bm{m}_{\hat{A}} = -(\nabla\bm{m})\circ \hat{A}$ and $\bm{\nu}\circ\hat{A} = -\bm{\nu}$.

In spherical coordinates the action has the form 
\begin{equation*}
    \hat{A}(\theta,\varphi) = \begin{cases}
        (\pi - \theta, \varphi+\pi) \quad \text{for~} \varphi \in [0,\pi) \, ,\\
        (\pi - \theta, \varphi-\pi) \quad \text{for~} \varphi \in [\pi,2\pi) \, .
    \end{cases} 
\end{equation*}
For axisymmetric $\bm{m}$, this yields for all $(\theta,\varphi) \in (0,\pi)\times[0,2\pi)$
\begin{equation*}
    \bm{m}_{\hat{A}}\left(\theta,\varphi \right) = \begin{pmatrix}
        \cos (\varphi+\pi) \sin (h(\pi - \theta))\\
     \sin (\varphi+\pi) \sin (h(\pi - \theta)) \\
     \cos (h(\pi - \theta) )
     \end{pmatrix}
    = \begin{pmatrix}
       \cos (\varphi) \sin(- h(\pi - \theta))\\
     \sin (\varphi) \sin(- h(\pi - \theta)) \\
     \cos (- h(\pi - \theta))
     \end{pmatrix} \, .  
\end{equation*}

\begin{figure}[t]
    \begin{center}
        \includegraphics{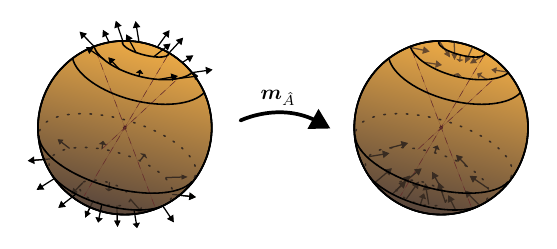}
        \caption{The action of $\bm{m}_{\hat{A}}$ is precisely the inversion of all base points of the vector field $\bm{m}$ at the origin. The corresponding vectors remain unchanged.} 
    \end{center}
\end{figure}

Thus, defining $h_{\hat{A}}(\theta)\coloneqq 2\pi k - h(\pi-\theta)$ for some $k\in \mathbb{Z}$, we see that the transformation $\bm{m} \mapsto \bm{m}_{\hat{A}}$ induces a transformation on the level of profiles of $h \mapsto h_{\hat{A}}$ and the axisymmetry of $\bm{m}$ is preserved under this transformation. 

\begin{definition} \label{def:hemispheric_map}
    We call a map $\bm{m}\colon \mathbb{S}^2 \to \mathbb{S}^2$ \emph{hemispheric} if and only if it is axisymmetric and satisfies $\bm{m} = \bm{m}_{\hat{A}}$. Similarly, we call a profile function $h\colon [0,\pi] \to \mathbb{R}$ a \emph{hemispheric profile} if and only if $h = h_{\hat{A}}$.

    We define the set of hemispheric profiles with boundary values $h(0) = m\pi$ and $h(\pi) = n\pi$ for $m,n\in \mathbb{Z}$ as
    \begin{equation*}
        H_{m,n} = \left\{ h \in E_{m,n} : h_{\hat{A}} = h \right\} \, . \qedhere
    \end{equation*} 
\end{definition}

For $h \in H_{m,n}$ the value of $k$ is determined by the end points
\begin{equation*} 
    k = \frac{m+n}{2} = \frac{h(\tfrac{\pi}{2})}{\pi} \, .
\end{equation*}
Consequently, hemispheric profiles are exactly the maps whose shift by $-k\pi$ is odd around the point $\frac{\pi}{2}$.

\begin{proposition} \label{prop:hemispheric_solution_preserved}
    Let $\bm{m}_0$ be hemispheric with hemispheric profile $h_0 \in H_{m,n}$ for some $m,n\in \mathbb{Z}$. Then the solution $\bm{m}$ to \eqref{eq:SHE} provided by \cref{theo:flow_existence} is also hemispheric for all $t\in(0,T)$, where $T>0$ is the maximal existence time until the first blowup, if present. In particular,  $h(\cdot,t) \in H_{m,n}$ for all $t\in(0,T)$.
\end{proposition}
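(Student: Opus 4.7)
The plan is to mirror the proof of \cref{prop:axisymmetric_solution_preserved} using the antipodal action in place of the $O(3)_{\hat{\bm{e}}_3}$ action, and then combine the two symmetries. Specifically, I would show that whenever $\bm{m}$ solves the SMHF \eqref{eq:SHE}, the map $\bm{m}_{\hat{A}} = \bm{m} \circ \hat{A}$ solves it as well, and then invoke the uniqueness assertion of \cref{theo:flow_existence} together with axisymmetry preservation.

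First I would check that \eqref{eq:SHE} is invariant under precomposition with $\hat{A}$. Since $\hat{A}$ is an isometry of $\mathbb{S}^2$, we have $\Delta(\bm{m}\circ\hat{A})(x,t) = (\Delta\bm{m})(-x,t)$ and $|\nabla(\bm{m}\circ\hat{A})|^2(x,t) = |\nabla\bm{m}|^2(-x,t)$. The key observation is $\bm{\nu}\circ\hat{A} = -\bm{\nu}$, which yields
\begin{equation*}
    \langle \bm{m}\circ\hat{A}, \bm{\nu}\rangle(x,t) = \langle \bm{m}(-x,t), -\bm{\nu}(-x)\rangle = -\langle\bm{m},\bm{\nu}\rangle(-x,t),
\end{equation*}
so $\langle\bm{m}\circ\hat{A},\bm{\nu}\rangle^2(x,t) = \langle\bm{m},\bm{\nu}\rangle^2(-x,t)$ and the two sign flips combine to give $\kappa\langle\bm{m}\circ\hat{A},\bm{\nu}\rangle\bm{\nu}(x,t) = \kappa\langle\bm{m},\bm{\nu}\rangle\bm{\nu}(-x,t)$. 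Assembling the right-hand side of \eqref{eq:SHE} for $\bm{m}\circ\hat{A}$ at $(x,t)$ therefore equals the right-hand side for $\bm{m}$ at $(-x,t)$, which equals $\partial_t\bm{m}(-x,t) = \partial_t(\bm{m}\circ\hat{A})(x,t)$.

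Next, since $\bm{m}_0$ is hemispheric, $\bm{m}_0 = \bm{m}_0 \circ \hat{A}$, so $\bm{m}$ and $\bm{m}\circ\hat{A}$ have the same initial data in $H^1(\mathbb{S}^2,\mathbb{S}^2)$. By the uniqueness clause of \cref{theo:flow_existence}, $\bm{m}(\cdot,t) = \bm{m}(\cdot,t)\circ\hat{A}$ for all $t \in (0,T)$. Combined with \cref{prop:axisymmetric_solution_preserved}, which gives axisymmetry of $\bm{m}(\cdot,t)$ for all $t\in(0,T)$, we conclude by \cref{def:hemispheric_map} that $\bm{m}(\cdot,t)$ is hemispheric. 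Translating to profiles: axisymmetry yields a smooth profile $h(\cdot,t)$ with $h(0,t)=m\pi$, $h(\pi,t)=n\pi$ by \cref{prop:axisymmetric_solution_preserved}, and invariance under $\hat{A}$ forces $h(\cdot,t) = h(\cdot,t)_{\hat{A}}$, i.e.\ $h(\cdot,t) \in H_{m,n}$ with the unique admissible shift $k = (m+n)/2$.

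There is no serious obstacle here beyond bookkeeping the signs correctly; the critical point is that the antipodal action flips the sign of $\bm{\nu}$, but the anisotropy enters the flow through $\langle\bm{m},\bm{\nu}\rangle\bm{\nu}$ and $\langle\bm{m},\bm{\nu}\rangle^2$, both of which are even in $\bm{\nu}$, so invariance survives. The argument is otherwise a direct copy of the one given for $O(3)_{\hat{\bm{e}}_3}$-invariance, and the preservation of the boundary data $m\pi$, $n\pi$ is inherited from \cref{prop:axisymmetric_solution_preserved}.
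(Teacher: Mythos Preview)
Your proposal is correct and follows exactly the approach indicated by the paper, which simply states that the proof is analogous to that of \cref{prop:axisymmetric_solution_preserved} using invariance of the equation under $\bm{m}\mapsto\bm{m}_{\hat{A}}$. Your sign bookkeeping for the anisotropy term is precisely the point: both $\langle\bm{m},\bm{\nu}\rangle\bm{\nu}$ and $\langle\bm{m},\bm{\nu}\rangle^2$ are even in $\bm{\nu}$, so the flow is invariant under precomposition with $\hat{A}$, and uniqueness in \cref{theo:flow_existence} combined with \cref{prop:axisymmetric_solution_preserved} yields the claim.
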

The proof is analogous to the proof of \cref{prop:axisymmetric_solution_preserved}, using the fact that the equation is invariant under the transformation $\bm{m} \mapsto \bm{m}_{\hat{A}}$.

It follows immediately from this result that for hemispheric initial data it suffices to consider the initial value problem of \eqref{eq:SHE_h} on the half interval $[0,\frac{\pi}{2}]$ with boundary values $h(0) = m\pi$ and $h(\frac{\pi}{2}) = k\pi$ since $h$ is completely determined by its values on this interval.

We now consider initial hemispheric profiles $h_0 \in H_{1,1}$, then clearly $k=1$. Additionally, we obtain $Q(\bm{m}_0) =0$ for the corresponding field. 
\begin{proposition} \label{prop:no_blow_up}
    Let $\bm{m}_0$ be hemispheric with profile $h_0 \in H_{1,1}$ that satisfies the wedge condition
    \begin{equation} \label{eq:hemispheric_wedge_condition}
        \pi \leq h_0(\theta) \leq \pi + \theta\quad \text{~for~all~} \theta \in \left[0,\frac{\pi}{2}\right]  \, .
    \end{equation} 
    Then the solution $\bm{m}$ to \eqref{eq:SHE} provided by Theorem \ref{theo:flow_existence} is global and no blowup occurs at infinity. Therefore, $\bm{m}_\infty$ is a hemispheric solution to \eqref{eq:EL} with $Q(\bm{m}_\infty) = 0$. Moreover, $h_\infty$ satisfies \eqref{eq:hemispheric_wedge_condition}.

    \begin{proof}
        Assume, by contradiction, that $\bm{m}$ blows up at $T \in (0,\infty)$. Recall that, by \cref{prop:hemispheric_solution_preserved}, $\bm{m}$ is hemispheric for all $0< t < T$ and completely determined by its hemispheric profile $h$ on $(0,\frac{\pi}{2})$. We claim that 
        \begin{equation*}
            \pi \leq h(\theta, t) \leq \pi + \theta \text{~for~all~} (\theta,t)\in \left[ 0, \frac{\pi}{2} \right]\times (0, T) \, .
        \end{equation*}
        Since $(\theta,t) \mapsto \pi + \theta$ is a solution to \eqref{eq:SHE_h}, the upper bound follows immediately from \cref{lemma:comparison_principle}. For the lower bound, we adapt the statement of \cref{lemma:comparison_principle} to functions on the half interval $[0,\frac{\pi}{2}]$ as $h(\frac{\pi}{2},t) = \pi$ is fixed for all $t$. For $g(\theta,t)\equiv \pi$, we find
        \begin{equation*}
            0= g_t \leq g_{\theta \theta}+\frac{\cos \theta}{\sin \theta} g_\theta-\frac{\sin 2 g}{2 \sin ^2 \theta}- \frac{\kappa}{2} \sin(2g-2\theta) = \frac{\kappa}{2} \sin(2\theta)\, ,\, 
        \end{equation*}
        which is certainly true for all $(\theta,t) \in [0,\frac{\pi}{2}]\times (0,T)$. Therefore, the lower bound is also established.    
        
        By assumption on $T$ and by \cref{lemma:blow_up_pole}, there are $\lambda_k \searrow 0$ and $t_k \nearrow T$ such that for any $0<\theta<\pi$, if we define
        \begin{equation*}
            \theta_k = 2\arctan\left(\lambda_k\tan\left(\frac{\theta}{2}\right)\right) \, , 
        \end{equation*}
        then $h(\theta_k, t_k) \to \beta(\theta) + k\pi$ for some $k\in \mathbb{Z}$ and nonconstant profile $\beta$. By the already stated bounds we also obtain
        \begin{equation*}
            \pi \leq h(\theta_k, t_k) \leq \pi +\theta_k \longrightarrow \pi \, , \, \text{~as~} \,  k\to \infty \, ,
        \end{equation*}
        and hence $h(\theta_k, t_k) \to \pi \neq \beta(\theta) + k\pi$, as $\beta$ is nonconstant. Thus, by contradiction, $\bm{m}$ cannot blow up at $T$ and consequently $\bm{m}$ is global. Blowup at infinity can be ruled out by the same argument.  
    \end{proof}
\end{proposition}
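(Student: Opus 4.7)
The plan is to preserve the wedge condition \eqref{eq:hemispheric_wedge_condition} for all time by \cref{lemma:comparison_principle}, and then use this trapping to contradict the bubbling profile given by \cref{lemma:blow_up_pole} at any hypothetical blowup point.

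First I would note that, by \cref{prop:hemispheric_solution_preserved}, the flow stays hemispheric, so $h$ is determined by its values on $[0,\pi/2]$ and satisfies the fixed Dirichlet conditions $h(0,t) = h(\pi/2,t) = \pi$. On this half-interval I would produce a solution and a subsolution bracketing $h_0$. Direct substitution shows that $\tilde h(\theta) = \pi + \theta$ is a stationary solution of \eqref{eq:SHE_h} (it is just the shifted normal field $\bm{\nu}$), and that the constant $g \equiv \pi$ is a subsolution on $[0,\pi/2]$ because the right-hand side of \eqref{eq:SHE_h} reduces to $\tfrac{\kappa}{2}\sin(2\theta)\ge 0$ there. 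An adaptation of \cref{lemma:comparison_principle} to the half-interval (where the role of the right endpoint is played by the pinned value $h(\pi/2,t) = \pi$) then gives $\pi \leq h(\theta,t) \leq \pi + \theta$ on $[0,\pi/2]\times(0,T)$.

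Next, assuming for contradiction that a blowup occurs at some $T \in (0,\infty]$, \cref{lemma:blow_up_pole} forces it to happen at $\pm\hat{\bm{e}}_3$ and (up to a coordinate change) provides sequences $\lambda_j \searrow 0$, $t_j \nearrow T$ and an integer $k$ such that $h(\theta_j,t_j) \to \beta(\theta) + k\pi$ uniformly on compact subsets of $(0,\pi)$, where $\theta_j = 2\arctan(\lambda_j\tan(\theta/2))$ and $\beta$ is a nonconstant axisymmetric harmonic profile. But $\theta_j \to 0$, so the wedge bound pinches $\pi \leq h(\theta_j,t_j) \leq \pi + \theta_j \to \pi$, forcing $\beta(\theta) + k\pi \equiv \pi$ and contradicting the nonconstancy of $\beta$. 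This rules out both finite-time and infinite-time blowup.

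Finally, once the flow is global and bubble-free, the asymptotic part of \cref{theo:flow_existence} yields a smooth stationary limit $\bm{m}_\infty$ solving \eqref{eq:EL}. Hemispheric symmetry transfers to $\bm{m}_\infty$ by \cref{prop:hemispheric_solution_preserved}, the degree follows from \eqref{eq:Q_axisymmetric} as $Q(\bm{m}_\infty) = \tfrac{1}{2}(\cos\pi-\cos\pi) = 0$, and the wedge condition for $h_\infty$ is inherited by passing to the pointwise limit. The main subtlety I anticipate is the combined use of hemispheric symmetry and the comparison principle on $[0,\pi/2]$, which requires checking that the sign of the reaction term at the barriers $\pi$ and $\pi+\theta$ cooperates with the fixed Dirichlet endpoint $h(\pi/2,t)=\pi$; once this is in place, the pinching argument against \cref{lemma:blow_up_pole} is essentially automatic.
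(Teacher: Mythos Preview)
Your proposal is correct and follows essentially the same approach as the paper: preserve the wedge condition via the comparison principle (using $\pi+\theta$ as an exact solution above and $g\equiv\pi$ as a subsolution on $[0,\pi/2]$ below, with the pinned value $h(\pi/2,t)=\pi$), then pinch the rescaled profile against the nonconstant bubble from \cref{lemma:blow_up_pole} to derive a contradiction. The only cosmetic difference is that the paper invokes \cref{lemma:comparison_principle} on the full interval for the upper barrier and only passes to the half-interval for the lower one, while you handle both on $[0,\pi/2]$; either works.
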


    \begin{figure}[t]
        \begin{center}
            \includegraphics{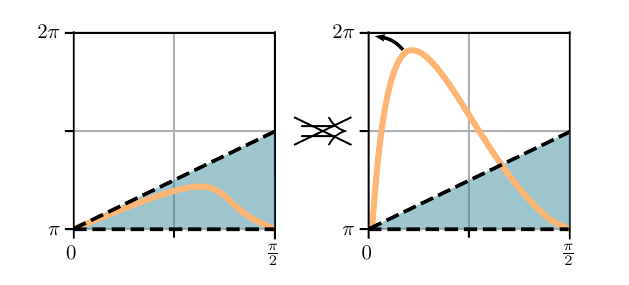}
            \caption{Pictogram of the proof idea of \cref{prop:no_blow_up}. Since the profile $h$ is confined in the wedge for all times, a blowup is prohibited as this requires the profile making a jump of at least $\pm \pi$ at $0$.}
        \end{center}
    \end{figure}

\begin{remark}
    An admissible profile that satisfies the condition is $h_0 \equiv \pi$. It is unclear whether all initial maps eventually flow into the same stationary solution. For $\kappa > 24$, a solution $\bm{m}_\infty$ obtained this way is however distinct from the minimizer of the energy $\mathcal{E}$ in the class of axisymmetric maps with degree 0. This follows from the fact that the profile $h$ of the minimizer satisfies $h < \pi$ on $(0,\pi)$~\cite[Prop. 3.1]{schroeder2024}.
\end{remark}

For convenience, we define the wedge $W_1\subset \mathbb{R}^2$ in which the graph of profiles satisfying condition \eqref{eq:hemispheric_wedge_condition} lies for all times. 
\begin{equation*}
    W_1 = \left\{ (x,y) \in \mathbb{R}^2 : 0\leq x\leq \frac{\pi}{2}, \pi \leq y \leq \pi + x \right\} \, .
\end{equation*}

From the wedge condition we can already deduce a uniform bound on the derivative of the profile $h$.

\begin{lemma} \label{lemma:bound_derivative_h}
    Let $\kappa \geq 4$ and let $h \in H_{1,1}$ be a solution of \eqref{eq:EL_h} that satisfies the wedge condition \eqref{eq:hemispheric_wedge_condition}. Then, $h'(\theta) \leq 1$ for all $\theta \in [0,\pi]$.
    \begin{proof}
        Since $h$ is a hemispheric profile, we deduce that $h'$ is an even function around $\frac{\pi}{2}$. Therefore, we focus on the interval $[0,\frac{\pi}{2}]$. By contradiction, assume that the maximal point $\theta_* \in [0,\frac{\pi}{2}]$ of $h'$ is such that $h'(\theta_*) >1$. From the wedge condition we conclude that $h'(0) \leq 1$ and $h'(\frac{\pi}{2}) \leq 0$. Thus, $\theta_* \in (0,\frac{\pi}{2})$ and $h''(\theta_*) = 0$. Then, as $h$ solves \eqref{eq:EL_h}, we obtain
        \begin{align*}
            0 &= 2\sin^2\theta_*\left(h'' + \frac{\cos\theta_*}{\sin\theta_*}h' - \frac{\sin 2 h}{2 \sin^2 \theta_*} - \frac{\kappa}{2} \sin(2 h-2\theta_*)\right) \\
            &> 2\cos\theta_* \sin\theta_* - \sin2h - \kappa \sin(2h-2\theta_*) \sin^2\theta_* \\
            &= \sin 2\theta_* - \sin2h - \kappa \sin(2h-2\theta_*) \sin^2\theta_* \eqqcolon f(\theta_*, h(\theta_*)) \, . \addtocounter{equation}{1}\tag{\theequation} \label{eq:definition_f}
        \end{align*}
        Using trigonometric identities, it can be shown that the function $f\colon \mathbb{R}^2 \to \mathbb{R}$, $(x,y) \mapsto f(x,y)$ is nonnegative on the set $W$ for $\kappa \geq 4$. Hence, as by the wedge condition $(\theta_*, h(\theta_*)) \in W_1$, we conclude that $f(\theta_*, h(\theta_*)) \geq 0$, contradicting \eqref{eq:definition_f}.
    \end{proof}
\end{lemma}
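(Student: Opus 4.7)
My plan is to argue by contradiction and reduce the question to a trigonometric inequality at the maximum point of $h'$. The first step is to invoke the hemispheric symmetry $h(\theta) = 2\pi - h(\pi - \theta)$, which makes $h'$ symmetric about $\theta = \pi/2$, so that it suffices to bound $h'$ on $[0, \pi/2]$. The wedge condition combined with $h(0) = \pi$ gives $h'(0) \leq 1$ by the one-sided difference quotient, while $h(\pi/2) = \pi$ is the minimum of the lower bound $h \geq \pi$, forcing $h'(\pi/2) = 0$. Therefore, if $h'$ attained a value strictly greater than $1$, this would have to happen at an interior maximum $\theta_* \in (0, \pi/2)$ with $h''(\theta_*) = 0$.

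The second step is to plug $h''(\theta_*) = 0$ into the Euler--Lagrange equation \eqref{eq:EL_h} multiplied through by $2\sin^2\theta_*$, and then use $h'(\theta_*) > 1$ together with $\sin 2\theta_* > 0$ to obtain the strict inequality
\begin{equation*}
    0 > \sin 2\theta_* - \sin 2h(\theta_*) - \kappa \sin(2h(\theta_*) - 2\theta_*)\sin^2\theta_* \eqqcolon f(\theta_*, h(\theta_*)).
\end{equation*}
The entire problem is now reduced to showing that $f(x, y) \geq 0$ on the wedge $W_1$ whenever $\kappa \geq 4$.

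I expect this pointwise trigonometric inequality to be the main obstacle. To attack it I would introduce the shift $u = y - \pi \in [0, x]$, use $\sin 2y = \sin 2u$ and $\sin(2y - 2x) = -\sin(2x - 2u)$, and then apply sum-to-product identities to obtain the factorization
\begin{equation*}
    f(x, \pi + u) = 2\sin(x - u)\bigl[\cos(x + u) + \kappa \cos(x - u)\sin^2 x\bigr].
\end{equation*}
Since $0 \le x - u \le \pi/2$, the first factor is nonnegative on $W_1$. For the bracket, with $x$ fixed the function $g(u)$ it defines satisfies $g''(u) = -g(u)$, so $g$ is a pure sinusoid $A\cos u + B\sin u$ with coefficients $A = \cos x(1 + \kappa\sin^2 x)$ and $B = \sin x(\kappa\sin^2 x - 1)$ that are explicit in $x$ and $\kappa$. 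The endpoint values $g(0) = \cos x(1+\kappa\sin^2 x)$ and $g(x) = \cos 2x + \kappa\sin^2 x$ are nonnegative as soon as $\kappa \geq 2$, and a short phase analysis of the sinusoid on the short interval $[0,x] \subset [0,\pi/2]$ upgrades nonnegativity to the whole interval, with $\kappa \geq 4$ serving as a safe threshold. This yields $f(\theta_*, h(\theta_*)) \geq 0$, contradicting the strict inequality above and completing the proof.
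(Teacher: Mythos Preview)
Your approach is essentially the same as the paper's: reduce to $[0,\pi/2]$ by hemispheric symmetry, locate an interior maximum of $h'$, plug $h''(\theta_*)=0$ into \eqref{eq:EL_h}, and reduce to the pointwise inequality $f\ge 0$ on $W_1$. One small correction: the symmetry makes $h-\pi$ odd about $\pi/2$, so $h$ may be strictly decreasing there and you only get $h'(\tfrac{\pi}{2})\le 0$ (as the paper states), not $h'(\tfrac{\pi}{2})=0$; this is harmless for the argument. Your factorization $f(x,\pi+u)=2\sin(x-u)\bigl[\cos(x+u)+\kappa\cos(x-u)\sin^2 x\bigr]$ and the sinusoid analysis of the bracket (nonnegative when $B\ge 0$; strictly decreasing with value $1+(\kappa-2)\sin^2 x>0$ at $u=x$ when $B<0$) correctly supply the details that the paper leaves to ``trigonometric identities.''
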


\begin{cor} \label{cor:monotonicity_h}
    Let $\kappa \geq 4$ and let $h \in H_{1,1}$ be a solution of \eqref{eq:EL_h} that satisfies the wedge condition \eqref{eq:hemispheric_wedge_condition}. Then, the function $\theta \mapsto \pi+\theta-h(\theta)$ is monotonically increasing. The function $\theta \mapsto \sin^2(h-\theta) \sin\theta$ is monotonically increasing on the interval $[0,\frac{\pi}{2}]$ and monotonically decreasing on the interval $[\frac{\pi}{2},\pi]$.
    \begin{proof}
        The first statement follows from the previous lemma. The second one follows from the lemma and from the fact that $\sin(2h-2\theta) \leq 0$ for $\theta \in [0,\frac{\pi}{2}]$ and $\sin(2h-2\theta) \geq 0$ for $\theta \in [\frac{\pi}{2},\pi]$ due to the wedge condition. 
    \end{proof}
\end{cor}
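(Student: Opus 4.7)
The plan is to handle the two monotonicity statements separately, both as direct calculations that leverage the derivative bound supplied by \cref{lemma:bound_derivative_h} together with the wedge condition.

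For the first statement, I would simply differentiate $\theta \mapsto \pi + \theta - h(\theta)$ to get $1 - h'(\theta)$ and invoke \cref{lemma:bound_derivative_h} to conclude that this is nonnegative on $[0,\pi]$. No further work is needed beyond citing the previous lemma.

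For the second statement, I would set $g(\theta) = \sin^2(h-\theta)\sin\theta$ and compute
\begin{equation*}
    g'(\theta) = \sin(2(h-\theta))\,(h'-1)\,\sin\theta + \sin^2(h-\theta)\,\cos\theta,
\end{equation*}
then do a case distinction on the two subintervals. On $[0,\pi/2]$, the wedge condition forces $h-\theta \in [\pi/2,\pi]$, hence $\sin(2(h-\theta)) \leq 0$; combined with $h' - 1 \leq 0$ (from \cref{lemma:bound_derivative_h}), $\sin\theta \geq 0$, and $\cos\theta \geq 0$, both summands are nonnegative. On $[\pi/2,\pi]$, I would use the hemispheric symmetry $h(\theta) = 2\pi - h(\pi-\theta)$ together with the wedge condition on $[0,\pi/2]$ to deduce $h - \theta \in [0,\pi/2]$, which flips the sign of the first factor to $\sin(2(h-\theta)) \geq 0$ while $\cos\theta \leq 0$; combined again with $h'-1 \leq 0$ and $\sin\theta \geq 0$, both summands are now nonpositive. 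This yields the claimed monotonicity on each half.

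There is really no serious obstacle here; the only subtle point is remembering to translate the wedge condition from the fundamental half-interval $[0,\pi/2]$ to $[\pi/2,\pi]$ via the hemispheric identity $h(\pi-\theta) = 2\pi - h(\theta)$ before computing the sign of $\sin(2(h-\theta))$ on the second interval. Once that observation is made, the rest is a sign-bookkeeping exercise using $h' \leq 1$.
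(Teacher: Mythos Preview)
Your proposal is correct and follows essentially the same approach as the paper's proof: the first monotonicity is immediate from $h'\le 1$, and the second comes from the sign analysis of $\sin(2h-2\theta)$ on each half-interval together with $h'-1\le 0$. You have simply written out in detail (including the explicit derivative of $g$ and the hemispheric extension of the wedge condition to $[\pi/2,\pi]$) what the paper compresses into two sentences.
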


In a similar fashion, we prove the non-blowup of another class of hemispheric solutions with initial profiles $h_0 \in H_{0,2}$. For the corresponding initial field, we assert that $Q(\bm{m}_0) = 0$. This follows from \eqref{eq:Q_axisymmetric} and the fact that $h_0(0)= 0$ and $h_0(\pi) = 2\pi$.
\begin{proposition}\label{rema:hemispheric_wedge_condition_3}
    Let $\kappa \geq 4$ and let $\bm{m}_0$ be hemispheric with profile $h_0 \in H_{0,2}$ that satisfies the wedge condition
    \begin{equation} \label{eq:hemispheric_wedge_condition_2}
        \theta \leq h_0(\theta) \leq 2\theta\quad \text{~for~all~} \theta \in \left[0,\frac{\pi}{2}\right]  \, .
    \end{equation} 
    Then the solution $\bm{m}$ to \eqref{eq:SHE} provided by Theorem \ref{theo:flow_existence} is global and no blowup occurs at infinity. Therefore, $\bm{m}_\infty$ is a hemispheric solution to \eqref{eq:EL} with $Q(\bm{m}_\infty) = 0$.  Moreover, $h_\infty$ satisfies \eqref{eq:hemispheric_wedge_condition_2}.
    \begin{proof}
        We check if the lower and upper bound in \eqref{eq:hemispheric_wedge_condition_2} are sub- and supersolutions of \eqref{eq:SHE_h} on the half interval, respectively. The lower bound $(\theta,t) \mapsto \theta$ is a solution of \eqref{eq:SHE_h}. For the upper bound we set $g(\theta,t) = 2\theta$. As computed in \cref{prop:kappa_4_solution}, $g$ satisfies by the assumption $\kappa \geq 4$ the inequality
        \begin{equation*}
             g_{\theta \theta}+\frac{\cos \theta}{\sin \theta} g_\theta-\frac{\sin 2 g}{2 \sin ^2 \theta}- \frac{\kappa}{2} \sin(2g-2\theta) = \left( 2 - \frac{\kappa}{2} \right) \sin(2\theta)  \leq 0 = g_t\, ,
        \end{equation*}
        for all $(\theta,t) \in [0,\frac{\pi}{2}]\times (0,T)$. Hence, $g$ is a supersolution of \eqref{eq:SHE_h} and we can argue as in the previous proof to rule out blowup of $\bm{m}$ in finite or infinite time.
    \end{proof}
\end{proposition}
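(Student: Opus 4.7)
The plan is to mirror the argument of \cref{prop:no_blow_up}: first use the comparison principle to confine the profile to the wedge for all time, then exploit this confinement to contradict the bubbling description in \cref{lemma:blow_up_pole}.

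First I would verify that the two bounding lines are, respectively, a subsolution and a supersolution of \eqref{eq:SHE_h} on the half-interval $[0,\tfrac{\pi}{2}]$ (both matching the hemispherically forced boundary value $h(\tfrac{\pi}{2},t)=\pi$). The lower bound $g(\theta,t)=\theta$ is the profile of the outer normal field $\bm{\nu}$, hence a stationary solution of the reduced flow. For the upper bound $g(\theta,t)=2\theta$, the computation already performed in \cref{prop:kappa_4_solution} yields
\begin{equation*}
g_{\theta\theta}+\frac{\cos\theta}{\sin\theta}\,g_\theta-\frac{\sin 2g}{2\sin^2\theta}-\frac{\kappa}{2}\sin(2g-2\theta)=\left(2-\frac{\kappa}{2}\right)\sin 2\theta,
\end{equation*}
which is nonpositive on $[0,\tfrac{\pi}{2}]$ whenever $\kappa\geq 4$, so $g=2\theta$ indeed satisfies the supersolution inequality $g_t=0\geq \text{RHS}$. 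The half-interval version of \cref{lemma:comparison_principle} (used already in the proof of \cref{prop:no_blow_up}, and equivalently obtained by extending both $\theta$ and $2\theta$ hemispherically to $[0,\pi]$ via the involution $g\mapsto g_{\hat{A}}$) then gives $\theta\leq h(\theta,t)\leq 2\theta$ for all $(\theta,t)\in[0,\tfrac{\pi}{2}]\times(0,T)$, where $T\in(0,\infty]$ is the maximal time of existence.

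Next I would rule out blowup. Assume for contradiction that a blowup occurs at some $(x_0,T)$ with $T\in(0,\infty]$. By \cref{lemma:blow_up_pole}, axisymmetry forces $x_0\in\{\pm\hat{\bm{e}}_3\}$; by the hemispheric symmetry of the flow (\cref{prop:hemispheric_solution_preserved}) it suffices to treat $x_0=\hat{\bm{e}}_3$. The lemma then produces $\lambda_j\searrow 0$, $t_j\nearrow T$, an integer $k\in\mathbb{Z}$, and a nonconstant harmonic profile $\beta$ such that for every fixed $\theta\in(0,\pi)$, setting $\theta_j=2\arctan(\lambda_j\tan(\theta/2))\to 0$, one has $h(\theta_j,t_j)\to \beta(\theta)+k\pi$. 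But the trapping bound forces $0\leq\theta_j\leq h(\theta_j,t_j)\leq 2\theta_j\to 0$, so necessarily $\beta(\theta)+k\pi=0$ for every $\theta\in(0,\pi)$, contradicting the nonconstancy of $\beta$. Hence neither finite nor infinite time blowup is possible, and by \cref{theo:flow_existence} the flow converges (along some sequence) to a smooth solution $\bm{m}_\infty$ of \eqref{eq:EL}; continuity of $Q$ on $H^1$ gives $Q(\bm{m}_\infty)=Q(\bm{m}_0)=0$, hemispheric symmetry is preserved in the limit by \cref{prop:hemispheric_solution_preserved}, and passing to the limit in the trapping bounds yields \eqref{eq:hemispheric_wedge_condition_2} for $h_\infty$.

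The main obstacle is purely bookkeeping: checking that the comparison principle, which was formulated on the full interval $[0,\pi]$ with profiles in $E_{m,n}$, transfers cleanly to the half-interval $[0,\tfrac{\pi}{2}]$ with the additional pinned endpoint $h(\tfrac{\pi}{2},t)=\pi$. This is the same adaptation already invoked in \cref{prop:no_blow_up}, and the fact that both $\theta$ and $2\theta$ meet the required boundary data at $\pi/2$ makes the adaptation immediate; no new estimate beyond the sign of $(2-\kappa/2)\sin 2\theta$ is needed.
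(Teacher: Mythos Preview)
Your proposal is correct and follows essentially the same approach as the paper: verify that $\theta$ is a solution and $2\theta$ a supersolution via the computation of \cref{prop:kappa_4_solution}, apply the half-interval comparison principle, and then repeat the blowup-exclusion argument of \cref{prop:no_blow_up}. The paper's proof in fact compresses the last step to ``argue as in the previous proof,'' whereas you have spelled out the bubbling contradiction explicitly.
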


We define the set in which the graph of profiles satisfying condition \eqref{eq:hemispheric_wedge_condition_2} lies for all times as
\begin{equation*}
    W_2 \coloneqq \left\{ (x,y) \in \mathbb{R}^2 : 0\leq x\leq \frac{\pi}{2}, x \leq y \leq 2 x \right\} \, .
\end{equation*}

As in the first case, we obtain a uniform bound on the derivative, complementary to that of \cref{lemma:bound_derivative_h}.
\begin{lemma} \label{lemma:bound_derivative_h_3}
    Let $\kappa \geq 4$ and let $h \in H_{0,2}$ be a solution of \eqref{eq:EL_h} that satisfies the wedge condition \eqref{eq:hemispheric_wedge_condition_2}. Then, $h'(\theta) \geq 1$ for all $\theta \in [0,\pi]$.
    \begin{proof}
        We almost repeat the proof of \cref{lemma:bound_derivative_h} verbatim, only changing the sets in which the graph lies and interchanging the signs in the inequalities. For the argument by contradiction, we now assume that $\theta_* \in (0,\frac{\pi}{2})$ is the minimal point of $h'$ with $h'(\theta_*) <1$ and $h''(\theta_*) = 0$. Then, as $h$ is a solution to \eqref{eq:EL_h}, we obtain $0 < f(\theta_*,h(\theta_*))$, where $f$ is defined as in \eqref{eq:definition_f}. In this case, $(\theta_*, h(\theta_*)) \in W_2$ and $\kappa \geq 4$ imply $f(\theta_*, h(\theta_*)) \leq 0$, a contradiction.
    \end{proof}
\end{lemma}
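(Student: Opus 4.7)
The plan is to mirror the proof of \cref{lemma:bound_derivative_h} step by step, adjusting signs and replacing $W_1$ by $W_2$. First, I reduce from $[0,\pi]$ to $[0,\pi/2]$ via hemispheric symmetry: since $h\in H_{0,2}$ corresponds to $k=(0+2)/2=1$, the identity $h(\theta)+h(\pi-\theta)=2\pi$ gives $h'(\theta)=h'(\pi-\theta)$, so it suffices to prove $h'\ge 1$ on $[0,\pi/2]$. Next I record the endpoint inequalities. At $\theta=0$ the wedge condition yields $h(\theta)-\theta\ge 0$ with equality at $0$, hence $h'(0)\ge 1$. At $\theta=\pi/2$ the hemispheric boundary value $h(\pi/2)=\pi=2\cdot\tfrac{\pi}{2}$ saturates the upper edge, so $2\theta-h(\theta)\ge 0$ on $[0,\pi/2]$ vanishes at $\pi/2$, which forces $h'(\pi/2)\ge 2$.

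Assume for contradiction that $h'$ attains a value strictly less than $1$ on $[0,\pi/2]$. By the endpoint inequalities its minimum is then attained at some interior point $\theta_*\in(0,\pi/2)$, at which $h''(\theta_*)=0$. Multiplying \eqref{eq:EL_h} by $2\sin^2\theta_*$ and using $\sin 2\theta_*>0$ together with $h'(\theta_*)<1$ produces, with the strict inequality now flipped relative to the original lemma,
\begin{equation*}
    0 < \sin 2\theta_* - \sin 2h(\theta_*) - \kappa\sin(2h(\theta_*)-2\theta_*)\sin^2\theta_* = f(\theta_*,h(\theta_*)),
\end{equation*}
where $f$ is the function defined in \eqref{eq:definition_f}.

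The principal analytic step, and the only place where the threshold $\kappa\ge 4$ enters, is to verify the reverse inequality $f\le 0$ on $W_2$. Setting $u=y-x\in[0,x]$ and applying the identities $\sin 2x-\sin 2y=-2\cos(2x+u)\sin u$ and $\sin(2y-2x)=2\sin u\cos u$, one rewrites
\begin{equation*}
    f(x,y) = -2\sin u\,\bigl[\cos(2x+u)+\kappa\sin^2 x\cos u\bigr].
\end{equation*}
Since $\sin u\ge 0$ on $W_2$, the claim reduces to showing that the bracket is nonnegative on $W_2$. Where $2x+u\le\pi/2$ this is immediate; the extremal boundary case $u=x$ (i.e., $y=2x$) is the tight one, where the bracket equals $\cos x\bigl[1+(\kappa-4)\sin^2 x\bigr]$, which is precisely the origin of the condition $\kappa\ge 4$. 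The intermediate values of $u$ follow by a monotonicity/continuity argument in $u\in[0,x]$.

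This contradicts the strict inequality $0<f(\theta_*,h(\theta_*))$ obtained above, so no such $\theta_*$ exists and $h'\ge 1$ on $[0,\pi/2]$. By the symmetry reduction we conclude $h'\ge 1$ on all of $[0,\pi]$. The main obstacle is the trigonometric verification of $f\le 0$ on $W_2$; everything else is a sign-flipped replica of \cref{lemma:bound_derivative_h}.
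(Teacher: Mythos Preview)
Your proof is correct and follows exactly the same approach as the paper's: reduce to $[0,\pi/2]$ by hemispheric symmetry, use the wedge endpoints to force an interior minimum of $h'$, derive $0<f(\theta_*,h(\theta_*))$ from \eqref{eq:EL_h}, and contradict this with $f\le 0$ on $W_2$ for $\kappa\ge 4$. You actually supply more detail than the paper on the trigonometric verification of $f\le 0$ on $W_2$ (the paper just asserts it); your boundary cases $u=0$ and $u=x$ are clean, and the interior claim can be made rigorous by writing the bracket as $(1+(\kappa-2)\sin^2 x)\cos u - 2\sin x\cos x\,\sin u$ and using $\sin u\le\sin x$, $\cos u\ge\cos x$ to bound it below by $\cos x$.
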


In conclusion, there exist two types of solutions of \eqref{eq:EL} that obey the antipodal symmetry. In the case of the harmonic map equation between two-spheres, the only hemispheric solution is the constant map.

\subsection{Saddle points of first type. Proof of 
Theorem 1} \label{section:proof_saddle_point_1}

First, we define the profile of the initial map for which we then gain an energy bound. For $\kappa > 0$ we let $0<\theta_0(\kappa)<\frac{\pi}{2}$, chosen later, and define the initial profile as
\begin{equation} \label{eq:definition_h_0kappa}
    h_{0,\kappa}(\theta) = \begin{cases}
        \pi + \theta & \text{for } \theta \in [0,\theta_0(\kappa)] \, ,\\
        \pi - \frac{\theta_0(\kappa)}{ \frac{\pi}{2} - \theta_0(\kappa)}\left( \theta - \frac{\pi}{2}\right) & \text{for } \theta \in (\theta_0(\kappa), \pi - \theta_0(\kappa)] \, ,\\
        \theta & \text{for } \theta \in (\pi - \theta_0(\kappa), \pi] \, .
    \end{cases}
\end{equation}
This profile belongs to $H_{1,1}$ and satisfies the wedge condition \eqref{eq:hemispheric_wedge_condition}. Thus, $h_{0,\kappa}$ qualifies as a valid initial profile for the flow in \cref{prop:no_blow_up}.

\begin{figure}[t]
    \begin{center}
        \includegraphics{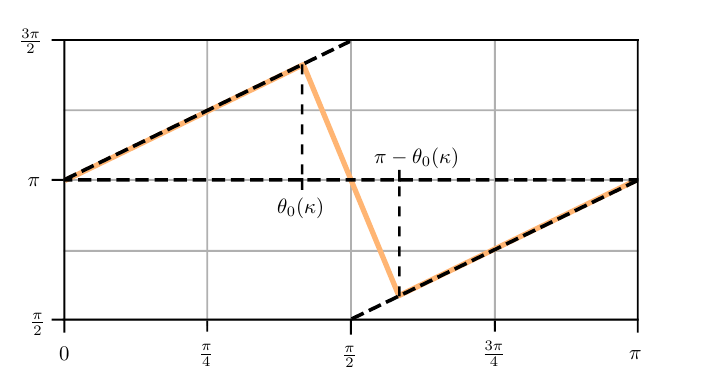}
        \caption{The function $h_{0,\kappa}$ in dependence of $\theta_0(\kappa)$.}
    \end{center}
\end{figure}

\begin{lemma} \label{lemma:energy_bound_h0}
    For $\kappa \geq 4$, there exists $\theta_0(\kappa)$ such that $h_{0,\kappa}$ satisfies the energy bound
    \begin{equation*}
        E(h_{0,\kappa}) \leq C\sqrt{\kappa} 
    \end{equation*}
    for some constant $C > 0$ independent of $\kappa$.

    \begin{proof}
        Since $h_{0,\kappa}$ is a hemispheric profile, it suffices to compute twice the energy on the interval $[0,\frac{\pi}{2}]$. Setting $h = h_{0,\kappa}$ and $\delta = \frac{\pi}{2\sqrt{\kappa}}$, we define $\theta_0(\kappa) = \theta_0 \coloneqq \frac{\pi}{2} -\delta$. As $\kappa \geq 4$, we assert that $\delta \leq \tfrac{\pi}{4}$. Moreover, we set
        \begin{equation*}
            a = \frac{\theta_0}{\frac{\pi}{2}-\theta_0} = \frac{\frac{\pi}{2}-\delta}{\delta} \, .
        \end{equation*}
        Doing so, we gain
        \begin{align*}
            E(h) &=\phantom{+} \underbrace{\int_0^{\theta_0} \sin\theta + \frac{\sin^2(\pi+\theta)}{\sin\theta} + \kappa \sin^2(\pi + \theta - \theta)\sin\theta \, \mathrm{d}\theta}_{\eqqcolon I_1}\\
            &\phantom{=} + \underbrace{\int_{\mathrlap{\theta_0}}^{\frac{\pi}{2}} a^2 \sin\theta + \frac{\sin^2\left(\pi - a(\theta - \frac{\pi}{2})\right)}{\sin\theta} + \kappa \sin^2\left(\pi - a(\theta - \tfrac{\pi}{2})-\theta\right)\sin\theta \, \mathrm{d} \theta}_{\eqqcolon I_2} \, .
        \end{align*}
        By definition of $\theta_0$, we also have $\cos\theta_0 = \sin\delta$ and $\sin\theta_0 = \cos\delta$. Thus, 
        \begin{equation*}
            I_1 = 2(1-\sin\delta) \, .
        \end{equation*}
        For $I_2$, we first estimate $\sin^{-1}\theta \leq \sin^{-1}\theta_0 \leq \sqrt{2}$ for $\theta \in [\theta_0,\frac{\pi}{2}] \subset [\tfrac{\pi}{4},\frac{\pi}{2}]$. Consequently, we obtain the bound
        \begin{equation*}
            I_2 \leq a^2\delta + \sqrt{2}\delta + \kappa\delta = \frac{\pi^2}{4\delta} + \kappa\delta - \pi + \delta \left( 1 + \sqrt{2} \right) \, ,
        \end{equation*}
        after inserting $a$. We combine the estimates to retrieve
        \begin{equation*}
            E(h) \leq \frac{\pi^2}{4\delta} + \delta \kappa + \underbrace{2 - \pi + \delta \left( 1 + 1\sqrt{2} - 2 \frac{\sin\delta}{\delta}\right)}_{\eqqcolon f(\delta)} \, .
        \end{equation*}
        As $\delta \leq \tfrac{\pi}{4}$, we have $f(\delta)<0$, which can easily be verified. The desired energy bound then follows directly upon inserting the definition of $\delta$ into the other two remaining terms.
    \end{proof}
\end{lemma}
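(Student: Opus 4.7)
The reduced energy $E(h)$ is an integral of three nonnegative terms (Dirichlet, azimuthal $\sin^2 h/\sin\theta$, and anisotropy $\kappa\sin^2(h-\theta)\sin\theta$). Since $h_{0,\kappa}$ is hemispheric, I can restrict integration to $[0,\pi/2]$ and double. The profile is engineered so that the expensive anisotropy term vanishes on the long piece $[0,\theta_0]$, leaving only a thin boundary layer $[\theta_0,\pi/2]$ of length $\delta := \pi/2 - \theta_0$ on which to absorb the full cost. My strategy is therefore to split the integral at $\theta_0$, estimate each piece using elementary bounds, and then optimize in $\delta$.

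On $[0,\theta_0]$, insert $h(\theta)=\pi+\theta$: then $h'=1$, $\sin h=-\sin\theta$, and $h-\theta=\pi$ kills the anisotropy integrand, so the remaining two integrands both reduce to $\sin\theta$. This yields an $O(1)$ contribution bounded by $2(1-\cos\theta_0)\leq 2$, independent of $\kappa$. On the short inner piece $[\theta_0,\pi/2]$, the profile is linear with slope $-a$ where $a=\theta_0/\delta$. I use the crude bounds $\sin^2(\cdot)\leq 1$ and $1/\sin\theta\leq 1/\sin\theta_0$, which dominate the three integrands by $a^2\sin\theta$, $1/\sin\theta_0$, and $\kappa\sin\theta$. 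Integrating over a set of length $\delta$ gives a contribution of order $a^2\delta + \delta/\sin\theta_0 + \kappa\delta$. Expanding $a^2\delta=(\pi/2-\delta)^2/\delta$, the leading $\delta$-dependence is $\pi^2/(4\delta) + \kappa\delta$ plus bounded corrections.

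The last step is to balance the two leading terms: setting $\pi^2/(4\delta) = \kappa\delta$ gives the optimal choice $\delta = \pi/(2\sqrt{\kappa})$, i.e., $\theta_0(\kappa) = \pi/2 - \pi/(2\sqrt{\kappa})$. For this $\delta$ both leading terms equal $\pi\sqrt{\kappa}/2$, summing to $\pi\sqrt{\kappa}$. The assumption $\kappa\geq 4$ ensures $\delta\leq\pi/4$, hence $\theta_0\geq\pi/4$ and $1/\sin\theta_0\leq\sqrt{2}$, so all remaining correction terms are uniformly bounded. Combining gives $E(h_{0,\kappa})\leq \pi\sqrt{\kappa} + O(1) \leq C\sqrt{\kappa}$ as required. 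The only mildly delicate point is verifying nonnegativity/boundedness of the $O(1)$ remainder on $\delta\in(0,\pi/4]$, which reduces to showing a simple explicit one-variable expression in $\delta$ is nonpositive there; this is routine.
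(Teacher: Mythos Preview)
Your proposal is correct and follows essentially the same route as the paper: the same hemispheric reduction to $[0,\pi/2]$, the same split at $\theta_0$, the same exact evaluation $2(1-\cos\theta_0)$ on the outer piece, the same crude bounds $\sin^2(\cdot)\le 1$ and $1/\sin\theta\le 1/\sin\theta_0\le\sqrt{2}$ on the boundary layer, and the same optimal choice $\theta_0(\kappa)=\pi/2-\pi/(2\sqrt\kappa)$. The only cosmetic difference is that you \emph{derive} the choice of $\delta$ by balancing $\pi^2/(4\delta)$ against $\kappa\delta$, whereas the paper simply posits $\delta=\pi/(2\sqrt\kappa)$ at the outset; the resulting estimates and the verification that the $O(1)$ remainder is controlled on $\delta\in(0,\pi/4]$ are identical.
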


In the following, we let $h \coloneqq h_\kappa$ be the smooth profile obtained from the initial profile $h_{0,\kappa}$, as in \cref{prop:no_blow_up}, for a given $\kappa \geq 4$. This implies that $h$ satisfies the wedge condition \eqref{eq:hemispheric_wedge_condition}, solves \eqref{eq:EL_h}, and that, as the energy decreases along the flow according to \cref{theo:flow_existence}, we immediately obtain with \cref{lemma:energy_bound_h0} the energy bound
\begin{equation} \label{eq:energy_bound}
    E(h) \leq C\sqrt{\kappa} \, .
\end{equation}
From this bound we get the following pointwise convergence.
\begin{lemma} \label{lemma:pointwise_convergence_h}
    For any $0<\theta_*<\frac{\pi}{2}$, we obtain that $h_\kappa(\theta) \to \pi + \theta$ as $\kappa \to \infty$, uniformly on $[0,\theta_*]$. Likewise, we have that $h_\kappa(\theta) \to \theta$ as $\kappa \to \infty$, uniformly on $[\pi - \theta_*, \pi]$.
    \begin{proof}
        We only show the first statement as the other one follows from  hemispheric symmetry. Assume, by contradiction, that there exists a $0<\theta_*<\frac{\pi}{2}$ such that for a sequence $\kappa \to \infty$ there exist $\theta_\kappa \in [0,\theta_*]$ such that $\pi + \theta_\kappa - h_\kappa(\theta_\kappa) \geq \varepsilon$ for some $0 < \varepsilon < \frac{\pi}{2}$. By \cref{cor:monotonicity_h}, this implies for all $\theta \in [\theta_*, \frac{\pi}{2}]$ that
        \begin{equation*}
            \sin^2(h_\kappa(\theta) - \theta)\sin\theta \geq \sin^2(h_\kappa(\theta_*) - \theta_*)\sin\theta_* \geq \sin^2\varepsilon\, \sin\theta_* > 0 \, .
        \end{equation*} 
        Invoking now the energy bound \eqref{eq:energy_bound} we find that
        \begin{equation*}
            \pi \sqrt{2\kappa} \geq E(h_\kappa) \geq \frac{\kappa}{2}\int_{\theta_*}^{\frac{\pi}{2}} \sin^2(h_\kappa(\theta) - \theta)\sin\theta \, \mathrm{d}\theta \geq \frac{\kappa}{2} (\tfrac{\pi}{2} -\theta_*) \sin^2\varepsilon \, \sin\theta_*  \, .
        \end{equation*}
        We arrive at a contradiction for $\kappa \to \infty$.
    \end{proof}   
\end{lemma}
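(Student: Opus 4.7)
The plan is to argue by contradiction: a persistent pointwise deficit $h_\kappa(\theta) < \pi + \theta$ propagates via the monotonicity of \cref{cor:monotonicity_h} to an entire subinterval of $[0,\pi/2]$, forcing an anisotropy contribution to $E(h_\kappa)$ that grows linearly in $\kappa$ and violates the bound $E(h_\kappa) \leq C\sqrt{\kappa}$ inherited from \cref{lemma:energy_bound_h0}.

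First, I would dispose of the second statement by hemispheric symmetry. Since $h_\kappa \in H_{1,1}$ has $k = 1$, the identity $h = h_{\hat{A}}$ reduces to $h_\kappa(\theta) + h_\kappa(\pi - \theta) = 2\pi$, so uniform convergence $h_\kappa(\theta) \to \pi + \theta$ on $[0,\theta_*]$ directly yields uniform convergence $h_\kappa(\theta) \to \theta$ on $[\pi - \theta_*, \pi]$ after the substitution $\theta \mapsto \pi - \theta$.

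For the first statement, I would suppose uniform convergence fails, so that there exist $\varepsilon \in (0, \pi/2)$, a sequence $\kappa_n \to \infty$, and points $\theta_n \in [0, \theta_*]$ with $\pi + \theta_n - h_{\kappa_n}(\theta_n) \geq \varepsilon$. By the monotonicity of $\theta \mapsto \pi + \theta - h_{\kappa_n}(\theta)$, this deficit persists on $[\theta_n, \pi/2] \supset [\theta_*, \pi/2]$. Using the wedge condition to ensure $\pi + \theta - h_{\kappa_n}(\theta) \in [0, \pi/2]$, together with the identity $\sin(h_{\kappa_n} - \theta) = \sin(\pi + \theta - h_{\kappa_n})$, one obtains $\sin^2(h_{\kappa_n}(\theta) - \theta) \geq \sin^2\varepsilon$ for all $\theta \in [\theta_*, \pi/2]$. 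Discarding the nonnegative Dirichlet and angular terms, the anisotropy part of the reduced energy then satisfies
\[ E(h_{\kappa_n}) \geq \frac{\kappa_n}{2} \int_{\theta_*}^{\pi/2} \sin^2(h_{\kappa_n}(\theta) - \theta)\sin\theta \, \mathrm{d}\theta \geq \frac{\kappa_n}{2} \sin^2\varepsilon \cdot \sin\theta_* \cdot \left(\frac{\pi}{2} - \theta_*\right), \]
and comparison with $E(h_{\kappa_n}) \leq C\sqrt{\kappa_n}$ forces $\sqrt{\kappa_n}$ to stay bounded, contradicting $\kappa_n \to \infty$.

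No serious obstacle is expected: every structural tool (the wedge condition, both monotonicity statements of \cref{cor:monotonicity_h}, and the $\sqrt{\kappa}$ energy bound) is already in hand. The only minor subtlety is keeping track of which interval the angle $\pi + \theta - h_{\kappa_n}(\theta)$ lives in, so that $\sin$ remains monotone and the lower bound $\sin^2\varepsilon$ can be extracted cleanly; the wedge condition $\pi \leq h_{\kappa_n}(\theta) \leq \pi + \theta$ is precisely what rules out cancellations and sign changes here.
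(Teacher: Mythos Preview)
Your proposal is correct and follows essentially the same contradiction argument as the paper: a persistent deficit is propagated by \cref{cor:monotonicity_h} across $[\theta_*,\pi/2]$, forcing the anisotropy term in $E(h_\kappa)$ to grow linearly in $\kappa$ against the $C\sqrt{\kappa}$ bound. The only cosmetic difference is that the paper invokes the second statement of \cref{cor:monotonicity_h} (monotonicity of $\sin^2(h-\theta)\sin\theta$ itself) to obtain the pointwise lower bound in one stroke, whereas you use the first statement (monotonicity of $\pi+\theta-h$) together with the wedge condition and the identity $\sin(h-\theta)=\sin(\pi+\theta-h)$; both routes land on the identical inequality.
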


We observe that as $\kappa \to \infty$, the profile $h_\kappa$ approaches a sawtooth shape, exhibiting an increasingly sharp transition near $\theta = \frac{\pi}{2}$. The next lemma quantifies the growth rate of the magnitude of the derivative at this point. Note that, by the wedge condition, we have $h'(\frac{\pi}{2}) \leq 0$.

\begin{lemma} \label{lemma:bound_derivative_h_2}
    There exists a universal constant $C>0$ such that
    \begin{equation*}
        -h'(\tfrac{\pi}{2}) \leq C \sqrt{\kappa} 
    \end{equation*}
    for all $\kappa \geq 4$.
    \begin{proof}
        Let $0< \delta \coloneqq \delta(\kappa) = 1/\sqrt{\kappa}$ and $\theta_\delta = \frac{\pi}{2} - \delta$. Then, adding a zero and using the fundamental theorem of calculus, we compute
        \begin{align*}
            - h'(\tfrac{\pi}{2}) \delta &= - \int_{\theta_\delta}^\frac{\pi}{2} h'(\tfrac{\pi}{2}) \sin(\tfrac{\pi}{2}) - h'(\theta) \sin\theta \, \mathrm{d}\theta - \int_{\theta_\delta}^\frac{\pi}{2} h'(\theta) \sin\theta \, \mathrm{d} \theta \\
            &= -\int_{\theta_\delta}^\frac{\pi}{2} \int_\theta^\frac{\pi}{2} \left(h' \sin\right)'(\tau) \, \mathrm{d}\tau \mathrm{d}\theta - \int_{\theta_\delta}^\frac{\pi}{2} h'(\theta) \sin\theta \, \mathrm{d}\theta \\
            &= \underbrace{-\int_{\theta_\delta}^\frac{\pi}{2} \int_\theta^\frac{\pi}{2} \frac{\sin 2h}{2\sin \tau} + \frac{\kappa}{2} \sin(2h-2\tau)\sin\tau  \, \mathrm{d} \tau \mathrm{d}\theta}_{\eqqcolon I_1}  \underbrace{-\int_{\theta_\delta}^\frac{\pi}{2}  h'(\theta) \sin\theta \, \mathrm{d}\theta }_{\eqcolon I_2} \, , 
        \end{align*} 
        where we used the fact that $h$ is a solution to \eqref{eq:EL_h}. Now, for $I_1$ we recall that $\sin 2h \geq 0$ and $\sin(2h-2\theta) \leq 0$ as $(\theta, h(\theta)) \in W_1$. Therefore, we estimate the first term by means of Jensen's inequality as
        \begin{align*}
            I_1 &\leq - \frac{\kappa}{2}\int_{\theta_\delta}^\frac{\pi}{2} \int_\theta^\frac{\pi}{2} \sin(2h-2\tau)\sin\tau \, \mathrm{d} \tau \mathrm{d}\theta \\
            &=  \kappa \int_{\theta_\delta}^\frac{\pi}{2} \int_\theta^\frac{\pi}{2} \smash{\underbrace{(-\cos(h-\tau))}_{\leq 1}}\sin(h-\tau) \sin\tau \, \mathrm{d} \tau \mathrm{d}\theta \\
            &\leq \sqrt{2\kappa \delta} \int_{\theta_\delta}^\frac{\pi}{2} \left( \frac{\kappa}{2} \int_\theta^\frac{\pi}{2} \sin^2(h-\tau) \sin\tau \, \mathrm{d}\tau \right)^{1/2} \mathrm{d}\theta \\
            &\leq \sqrt{2E(h)\delta} \sqrt{\kappa} \delta = \sqrt{2E(h) \delta} \, ,
        \end{align*}
        taking into account $\delta = 1/\sqrt{\kappa}$. For $I_2$ we obtain also by Jensen's inequality 
        \begin{equation*}
            I_2 \leq \sqrt{2\delta}\left( \frac{1}{2}\int_{\theta_\delta}^\frac{\pi}{2} h'(\theta)^2 \sin\theta \, \mathrm{d}\theta \right)^{1/2} \leq \sqrt{2E(h)\delta} \, .
        \end{equation*}
        We combine the estimates to retrieve
        \begin{equation*}
            - h'(\tfrac{\pi}{2}) \leq 2\sqrt{2\frac{E(h)}{\delta}} = 2\sqrt{2E(h) \sqrt{\kappa}} \, .
        \end{equation*}
        Finally, using \eqref{eq:energy_bound}, we arrive at the statement.
    \end{proof}
\end{lemma}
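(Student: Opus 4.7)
My plan is to extract a pointwise bound on $|h'(\pi/2)|$ from the global energy bound \eqref{eq:energy_bound} by integrating the Euler--Lagrange equation \eqref{eq:EL_h} over a short interval $[\pi/2 - \delta, \pi/2]$ whose length $\delta$ should scale like $1/\sqrt{\kappa}$, the expected thickness of the domain wall centered at the equator. This scaling is the natural one because the two competing contributions in the energy balance at precisely this length scale.

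The first step is to rewrite \eqref{eq:EL_h} in Sturm--Liouville divergence form,
\begin{equation*}
(h' \sin \theta)' = \frac{\sin 2h}{2 \sin\theta} + \frac{\kappa}{2} \sin(2h - 2\theta) \sin\theta,
\end{equation*}
and then double-integrate over the triangular region $\{(\theta,\tau) : \theta_\delta \le \theta \le \tau \le \pi/2\}$ with $\theta_\delta = \pi/2 - \delta$, to arrive at an identity of the form
\begin{equation*}
-h'(\tfrac{\pi}{2})\,\delta = -\int_{\theta_\delta}^{\pi/2} h'(\theta)\sin\theta\,d\theta - \int_{\theta_\delta}^{\pi/2}\int_\theta^{\pi/2}\left[\frac{\sin 2h}{2\sin\tau} + \frac{\kappa}{2}\sin(2h-2\tau)\sin\tau\right] d\tau\, d\theta.
\end{equation*}
This reduces the boundary quantity $h'(\pi/2)$ to integrals that can be controlled by the energy.

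The second step is a sign analysis based on the wedge $W_1$: on $[0,\pi/2]$ one has $h \in [\pi,\pi + \pi/2]$ and $h-\tau \in [\pi/2,\pi]$, so $\sin 2h \ge 0$ and $\sin(2h-2\tau) \le 0$. Hence only the anisotropy double integral contributes to an upper bound, and after pulling out the bounded factor $|\cos(h-\tau)| \le 1$ it reduces to a constant multiple of $\kappa \int\int \sin(h-\tau)\sin\tau\, d\tau\, d\theta$. Applying Cauchy--Schwarz to the inner integral, together with $\int_{\theta_\delta}^{\pi/2} \sin\tau\,d\tau \le \delta$ and the key energy control $\int_0^\pi \sin^2(h-\tau)\sin\tau\,d\tau \le 2 E(h)/\kappa$, and then a second Cauchy--Schwarz on the outer integral, should bound both integrals on the right by $\sqrt{2 E(h)\,\delta}$ up to universal constants, yielding
\begin{equation*}
-h'(\tfrac{\pi}{2}) \le \frac{C\sqrt{E(h)}}{\sqrt{\delta}}.
\end{equation*}

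With $\delta = 1/\sqrt{\kappa}$ and \eqref{eq:energy_bound}, this gives the desired $-h'(\pi/2) \le C'\sqrt{\kappa}$. The main obstacle is finding the right manipulation in the first step: a single integration of the divergence-form ODE over $[\theta_\delta,\pi/2]$ produces an unwanted boundary term $h'(\theta_\delta)\sin\theta_\delta$ that is not controlled pointwise, so the double integration (effectively an averaging of the inner variable against the integration endpoint) is needed to remove it. Calibrating the powers of $\kappa$ and $\delta$ in the Cauchy--Schwarz step so that they balance only at $\delta = 1/\sqrt{\kappa}$ is a secondary but essential subtlety; cruder estimates at either step would lose the precise $\sqrt{\kappa}$ scaling.
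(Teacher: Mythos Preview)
Your proposal is correct and follows essentially the same approach as the paper: the identical divergence-form rewriting of \eqref{eq:EL_h}, the same double integration over $[\theta_\delta,\pi/2]$ with $\delta=1/\sqrt{\kappa}$ to avoid the uncontrolled boundary term $h'(\theta_\delta)\sin\theta_\delta$, the same sign analysis from the wedge $W_1$, and the same Cauchy--Schwarz estimates (which the paper idiosyncratically calls ``Jensen's inequality'') against the Dirichlet and anisotropy parts of $E(h)$. The only cosmetic difference is that for the outer integral in $I_1$ the paper simply bounds the integrand by $\sqrt{E(h)}$ and multiplies by the interval length $\delta$ rather than invoking a second Cauchy--Schwarz, but the resulting bound is the same.
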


In order to prove \cref{theo:saddle_point}, we construct a perturbation $h_\varepsilon = h + \varepsilon g$ of the profile $h$ for some $g \in E_{0,0}$ such that the resulting map has lower energy. To this end, in light of \eqref{eq:energy_difference_hg_eps}, we need to show that the second variation at $h$ in direction $g$ is negative. We define $g = g_\kappa\colon [0,\pi] \to \mathbb{R}$ as the smooth map
\begin{equation} \label{eq:definition_g}
    g_\kappa(\theta) = (h_\kappa'(\theta)-1)\sin\theta \, ,
\end{equation}
corresponding to an infinitesimal shift at the equator in altitudinal direction. Then, we have $g(0) = g(\pi) = 0$, which implies that $g$ is a valid perturbation. We compute the first derivative of $g$ as
\begin{equation} \label{eq:derivative_g}
    g' = h'' \sin\theta + h' \cos\theta - \cos\theta = \left(\frac{\sin 2 h}{2\sin^2 \theta}+ \frac{\kappa}{2} \sin(2 h-2 \theta)\right)\sin \theta -\cos\theta \, ,
\end{equation}
where we again used that $h$ is a solution to \eqref{eq:EL_h}. As a consequence of \cref{lemma:bound_derivative_h}, we find that $g(\theta) \leq 0$ for all $\theta \in [0,\pi]$. Moreover, we observe that $2 g'(\theta)\sin \theta$ is exactly equal to $-f(\theta, h(\theta))$ in the proof of said lemma. As $f \geq 0$ on $W_1$, we conclude by the wedge condition that $g' \leq 0$ on $[0, \frac{\pi}{2}]$ and by symmetry $g' \geq 0$ on $[\frac{\pi}{2}, \pi]$. Consequently, $g$ is nonpositive on $[0,\pi]$ and attains its minimum at $\frac{\pi}{2}$. 

\begin{proposition} \label{prop:second_variation_negative}
    There exists $\kappa_0 \geq 4$ such that for all $\kappa \geq \kappa_0$ we have
    \begin{equation*}
        \delta^2 E_\kappa[h_\kappa](g_\kappa) < 0 \, .
    \end{equation*}
    \begin{proof}
        Recalling \eqref{eq:second_variation_E}, we rewrite the two terms not involving $g'$ by inserting the definition of $g$ once and then integrating by parts. For the first one we obtain
        \begin{align*}
            \int_0^\pi \frac{\cos 2h}{\sin^2\theta} g^2 \sin\theta \, \mathrm{d} \theta &= \int_0^\pi \cos 2h(h' -1)g \, \mathrm{d} \theta \\
            &= \int_0^\pi - \frac{\sin 2h}{2\sin\theta} g' \sin\theta - \cos2h \; g  \, \mathrm{d} \theta \, .
        \end{align*}
        Likewise, we compute the second term as 
        \begin{align*}
            \int_0^\pi \kappa \cos(2h-2\theta) g^2 \sin\theta \, \mathrm{d} \theta &= \int_0^\pi \kappa \cos(2h-2\theta)(h' -1)g \sin^2\theta \, \mathrm{d} \theta \\
            &=\int_0^\pi - \frac{\kappa}{2} \sin(2h-2\theta) (g' \sin^2\theta + 2g \sin\theta \cos\theta) \, \mathrm{d} \theta \, .
        \end{align*}
        Consequently, we rewrite the second variation using \eqref{eq:derivative_g} as
        \begin{align*}
            \delta^2 E[h](g) &= \int_0^\pi {g'}^2 \sin \theta - \biggl(\underbrace{ \vphantom{\frac{1}{\sin^2}}\frac{\sin 2h}{2\sin\theta} + \frac{\kappa}{2} \sin(2h-2\theta) \sin\theta }_{=\, g' + \cos\theta}\biggr) g' \sin\theta  \, \mathrm{d} \theta\\
            &\phantom{{}= } + \int_0^\pi -\cos 2h \, g - \kappa \sin(2h-2\theta) g \sin\theta \cos\theta \, \mathrm{d} \theta \\ 
            &= \int_0^\pi - \frac{1}{2}\sin 2\theta\,g'-\cos 2h \, g - \kappa \sin(2h-2\theta) g \sin\theta \cos\theta \, \mathrm{d} \theta \\ 
            &= 2 \int_0^{\frac{\pi}{2}} \left(\cos2\theta-\cos 2h-  \kappa \sin(2h-2\theta) \sin\theta \cos\theta \right) g  \, \mathrm{d} \theta \, , \addtocounter{equation}{1}\tag{\theequation} \label{eq:second_variation_g}
        \end{align*}
        where we have used that the integrand is even at $\frac{\pi}{2}$. We decrease the domain of integration even further. To this end, we define the function $\lambda\colon \mathbb{R}^2 \to \mathbb{R}$ by
        \begin{equation} \label{eq:lambda_function}
            \lambda(x,y) = \cos 2x - \cos 2y - \kappa \sin(2y-2x) \sin x \cos x \, ,
        \end{equation}
        and see that the term in parentheses in the integrand is exactly equal to $\lambda(\theta, h(\theta))$. Using trigonometric identities, one shows that $\lambda$ is nonnegative on the set $W_1 \cap \{ 0\leq x\leq \tfrac{\pi}{4}\}$, as long as $\kappa \geq 4$. Hence, as $g \leq 0$, we deduce that the integral from $0$ to $\frac{\pi}{4}$ in \eqref{eq:second_variation_g} is negative. Therefore, we obtain
        \begin{equation}\label{eq:bound_delta2E}
            \delta^2 E[h](g) < 2 \biggl( \underbrace{\int_{\frac{\pi}{4}}^{\frac{\pi}{2}} \cos2\theta \, g \, \mathrm{d} \theta}_{\eqcolon I_1} + \underbrace{\int_{\frac{\pi}{4}}^{\frac{\pi}{2}} - \cos 2h \, g \, \mathrm{d} \theta}_{\eqcolon I_2} -  \kappa \underbrace{\int_{\frac{\pi}{4}}^{\frac{\pi}{2}} \sin(2h-2\theta) g \sin\theta \cos\theta \, \mathrm{d} \theta}_{\eqcolon I_3} \biggr) \, . 
        \end{equation} 

        We begin with the bound for $I_1$. We insert the definition of $g$ and integrate by parts, which yields
        \begin{align*}
            I_1 &= \int_{\frac{\pi}{4}}^{\frac{\pi}{2}} \cos2\theta (h' -1) \sin\theta \, \mathrm{d} \theta \\
            &= \frac{\pi}{2} + \int_{\frac{\pi}{4}}^{\frac{\pi}{2}} \underbrace{(\pi+\theta-h)}_{\geq 0} \underbrace{\left( \cos\theta \cos 2\theta - 2 \sin\theta \sin 2\theta \right)}_{\leq 0} \, \mathrm{d} \theta \leq \frac{\pi}{2} \, . \addtocounter{equation}{1}\tag{\theequation} \label{eq:bound_I1}
        \end{align*}
        For $I_2$ we insert again the definition of $g$ and integrate by parts. Using the fact that
        \begin{equation*}
            \int_{\frac{\pi}{4}}^{\frac{\pi}{2}} \frac{1}{2} \sin 2\theta \cos \theta + \cos2\theta \sin \theta \, \mathrm{d} \theta = - \frac{1}{2\sqrt{2}}\, ,
        \end{equation*}
        we obtain 
        \begin{align*}
        I_2 &= -\int_{\frac{\pi}{4}}^{\frac{\pi}{2}} \cos2h (h' -1) \sin\theta \, \mathrm{d} \theta \\
            &= - \frac{\sin 2h}{2} \sin \theta \bigg|_{\frac{\pi}{4}}^{\frac{\pi}{2}} + \int_{\frac{\pi}{4}}^{\frac{\pi}{2}} \frac{\sin 2h}{2} \cos\theta + \cos 2h \sin \theta \, \mathrm{d} \theta \\
        &= \frac{\sin (2h(\tfrac{\pi}{4})) -1}{2\sqrt{2}} + \int_{\frac{\pi}{4}}^{\frac{\pi}{2}} \frac{1}{2} \left( \sin 2h - \sin 2\theta \right) \cos \theta + \left( \cos 2h - \cos 2\theta \right) \sin \theta \, \mathrm{d} \theta \, .
        \end{align*}
        The first term is nonpositive. Using a trigonometric identity for the integrand in the second term, we thus get
        \begin{align*}
            I_2 &\leq \frac{1}{2} \int_{\frac{\pi}{4}}^{\frac{\pi}{2}} \sin (h-\theta)(3 \cos (h+2 \theta)-\cos h) \, \mathrm{d} \theta \\
            &\leq \frac{C}{\sqrt{\kappa}} \Biggl( \underbrace{ \frac{\kappa}{2} \int_{ \frac{\pi}{4}}^{\frac{\pi}{2}} \sin^2(h-\theta)\sin\theta \, \mathrm{d} \theta}_{\leq E(h)} \Biggr)^{1/2} \leq C\sqrt{ \frac{ E(h)}{\kappa}} \leq \frac{C}{\sqrt[4]{\kappa}}  \, , \addtocounter{equation}{1}\tag{\theequation} \label{eq:bound_I2}
        \end{align*}
        where we used Jensen's inequality and \eqref{eq:energy_bound}.

        For the bound on $I_3$, we define by virtue of the intermediate value theorem the point $\theta_\kappa \in (\frac{\pi}{4}, \tfrac{\pi}{2})$ to be the first point such that
        \begin{equation*}
            h(\theta_\kappa) = \frac{3\pi}{4} + \theta_\kappa \, ,
        \end{equation*}
        see \cref{fig:wedge6}. From \cref{lemma:bound_derivative_h} we infer that once $h$ has crossed the line $\frac{3\pi}{4} + \theta$, it can never surpass it again since that would require $h' > 1$. 
        \begin{figure}[t]
            \begin{center}
                \includegraphics{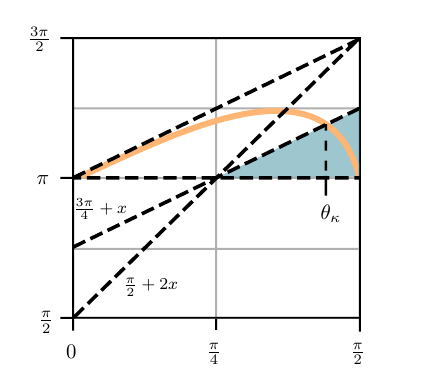}
                \caption{Plot showing the position of $\theta_\kappa$. For $\theta \geq \theta_\kappa$ the graph clearly lies below the line $y = \tfrac{\pi}{2} + 2x$.}
                \label{fig:wedge6}
            \end{center}
        \end{figure}
        
        In view of \cref{lemma:pointwise_convergence_h}, we observe that there exists $\kappa_0\geq 4$ such that for all $\kappa \geq \kappa_0$ we can guarantee $\theta_\kappa \in [\frac{3\pi}{8},\frac{\pi}{2}]$. We note that, by \cref{lemma:bound_derivative_h} and the wedge condition, the integrand in $I_3$ is nonnegative. Hence, decreasing the domain of integration to $[\frac{\pi}{4}, \theta_\kappa]$ decreases the value of the integral. Additionally, $\sin^2\theta \cos\theta \geq \sin^2\theta_\kappa \cos\theta_\kappa$ for all $\theta\in [\frac{\pi}{4}, \theta_\kappa]$, as $\theta_\kappa \geq \frac{3\pi}{8}$. Putting this together, we get the bound
        \begin{align*}
            I_3 &= \int_{\frac{\pi}{4}}^{\frac{\pi}{2}} \sin(2h-2\theta) (h' -1) \sin^2\theta \cos\theta \, \mathrm{d} \theta \\
            &\geq \sin^2\theta_\kappa \cos\theta_\kappa \int_{\frac{\pi}{4}}^{\theta_\kappa} 2\sin(h-\theta)\cos(h-\theta) (h' -1) \, \mathrm{d} \theta \\
            &= \sin^2\theta_\kappa \cos\theta_\kappa \left(  \sin^2\bigl(h(\theta_\kappa)-\theta_\kappa\bigr) - \sin^2\bigl(h(\tfrac{\pi}{4})-\tfrac{\pi}{4}\bigr)\right) \, .
        \end{align*}
        By definition of $\theta_\kappa$ we have $\sin^2(h(\theta_\kappa)-\theta_\kappa) = \frac{1}{2}$ for all $\kappa$. Moreover, in view of \cref{lemma:pointwise_convergence_h}, we have $\sin^2(h(\tfrac{\pi}{4})-\tfrac{\pi}{4}) \to 0$ as $\kappa \to \infty$. Defining $\delta_{\kappa} = \tfrac{\pi}{2} - \theta_\kappa$, such that $\delta_\kappa \to 0$ as $\kappa \to \infty$, we obtain
        \begin{equation} \label{eq:bound_I3}
            I_3 \geq \sin\delta_{\kappa} \cos^2\delta_{\kappa} \left( \frac{1}{2}  - \sin^2\bigl(h(\tfrac{\pi}{4})-\tfrac{\pi}{4}\bigr) \right) = \delta_\kappa \left( \frac{1}{2} + o(1) \right) \, ,
        \end{equation}
        where $o(1)$ is with respect to $\kappa \to \infty$. 
        
        We claim that there exists a constant $C>0$ such that
        \begin{equation} \label{eq:delta_bound}
            \frac{1}{C\sqrt{\kappa}}  \leq \delta_{\kappa} \leq \frac{C}{\sqrt{\kappa}} \, 
        \end{equation}
        for all $\kappa \geq \kappa_0$, with $\kappa_0$ increased if needed. For the upper bound we argue as in the proof of \cref{lemma:pointwise_convergence_h}. Using \cref{cor:monotonicity_h} and the energy bound \eqref{eq:energy_bound} we find that
        \begin{equation*}
            C \sqrt{\kappa} \geq E(h) \geq \frac{\kappa}{2} \int_{\theta_\kappa}^{\frac{\pi}{2}} \sin^2(h-\theta)\sin\theta \, \mathrm{d} \theta \geq \frac{\kappa}{2} \sin^2(h(\theta_\kappa)-\theta_\kappa)\sin\theta_\kappa \delta_{\kappa} \geq \frac{\kappa\delta_{\kappa}}{4\sqrt{2}} ,
        \end{equation*}  
        where we used $\sin\theta_\kappa > 1/\sqrt{2}$ as $\theta_\kappa > \frac{\pi}{4}$. Hence, for all $\kappa\geq \kappa_0$
        \begin{equation*}
            \delta_{\kappa} \leq \frac{C}{\sqrt{\kappa}} \, .
        \end{equation*}
        For the lower bound we use the definition of $\theta_\kappa$. By Taylor's theorem we have
        \begin{equation*}
            \frac{\pi}{4} - \delta_{\kappa} = h(\theta_\kappa) - h(\tfrac{\pi}{2}) = -h'(\tfrac{\pi}{2}) \delta_{\kappa} + \int_{\theta_\kappa}^{\frac{\pi}{2}} h''(\theta) (\theta-\theta_\kappa) \, \mathrm{d}\theta \, .
        \end{equation*}
        Now, as for all $\theta \in [\theta_\kappa,\frac{\pi}{2}]$ we have $(\theta,h(\theta)) \in W_1 \cap \{ y \leq \tfrac{\pi}{2} + 2x \}$ (see \cref{fig:wedge6}), this implies, using a trigonometric identity, that for these $\theta$ we have
        \begin{align*}
            &\phantom{{}={}} \frac{\sin 2 h}{2\sin^2 \theta}+ \frac{\kappa}{2} \sin(2 h-2 \theta) \\
            &= \frac{1}{8 \sin^2\theta} \Bigl( 2\kappa \underbrace{\sin(2h-2\theta)}_{\leq 0} - \kappa \underbrace{\sin(2h - 4 \theta)}_{\geq 0} + \underbrace{(4-\kappa)}_{\leq 0} \underbrace{\vphantom{()}\sin 2h}_{\geq 0} \Bigr) \leq 0\, .
        \end{align*}
        Therefore, using the fact that $h$ is a solution to \eqref{eq:EL_h}, we obtain
        \begin{equation*}
            h''(\theta) \leq - \frac{\cos\theta}{\sin\theta} h'(\theta) \, 
        \end{equation*}
        for all $\theta \in [\theta_\kappa,\frac{\pi}{2}]$. Putting this into the remainder term, we get by partial integration
        \begin{align*}
            \frac{\pi}{4} - \delta_{\kappa} &\leq -h'(\tfrac{\pi}{2}) \delta_{\kappa} - \int_{\theta_\kappa}^{\frac{\pi}{2}} \frac{\cos\theta}{\sin\theta} h'(\theta) (\theta-\theta_\kappa) \, \mathrm{d}\theta \\
            &=  -h'(\tfrac{\pi}{2}) \delta_{\kappa} - \underbrace{ \frac{\cos\theta}{\sin\theta}(\theta-\theta_\kappa) h(\theta)\bigg|_{\theta_\kappa}^{\frac{\pi}{2}}}_{=0} + \int_{\theta_\kappa}^{\frac{\pi}{2}}  \left( - \frac{\theta-\theta_\kappa}{\sin^2\theta} + \frac{\cos\theta}{\sin\theta} \right) h(\theta)\mathrm{d}\theta \, .
        \end{align*}
        In the integral the first term is negative. Using the fact that $h \sin^{-1}\theta$ is bounded from above for $\theta \in [\theta_\kappa,\frac{\pi}{2}]$, we estimate further
        \begin{align*}
            \int_{\theta_\kappa}^{\frac{\pi}{2}}  \left( - \frac{\theta-\theta_\kappa}{\sin^2\theta} + \frac{\cos\theta}{\sin\theta} \right)h(\theta) \mathrm{d}\theta &\leq C \int_{\theta_\kappa}^{\frac{\pi}{2}} \cos\theta \, \mathrm{d} \theta \\
            &= C \int_{\theta_\kappa}^{\frac{\pi}{2}} \sin(\tfrac{\pi}{2}-\theta) \, \mathrm{d} \theta \leq C \delta_{\kappa}^2 \, .
        \end{align*}
        Taking the already established upper bound for $\delta_{\kappa}$, we find $\kappa_0\geq 4$ such that $C\delta_{\kappa} < 1$ for all $\kappa \geq \kappa_0$. Applying this result in the above inequality and rearranging yields, together with \cref{lemma:bound_derivative_h_2} and the fact that $\kappa \geq 4$, the estimates
        \begin{equation*}
            \frac{\pi}{4} \leq \left(- h'(\tfrac{\pi}{2}) + 2\right) \delta_{\kappa} \leq C \sqrt{\kappa} \delta_{\kappa} \, .
        \end{equation*}
        Dividing by $C\sqrt{\kappa}$, we arrive at the lower bound. 
        
        Putting this bound into \eqref{eq:bound_I3} and increasing $\kappa_0$ if necessary, we conclude for $\kappa \geq \kappa_0$ the estimate 
        \begin{equation} \label{eq:bound_I3_final}
            \kappa I_3 \geq \frac{\kappa}{4} \delta_{\kappa} \geq C\sqrt{\kappa}\, .
        \end{equation}
        Inserting all estimates \eqref{eq:bound_I1}, \eqref{eq:bound_I2}, and \eqref{eq:bound_I3_final} into \eqref{eq:bound_delta2E}, we arrive for $\kappa \geq \kappa_0$ at the estimate
        \begin{equation*}
            \delta^2 E[h](g) \leq \pi + \frac{C}{\sqrt[4]{\kappa}} - C\sqrt{\kappa} \, .
        \end{equation*}
        Increasing $\kappa_0$ if necessary, the right-hand side is indeed negative. 
    \end{proof}
\end{proposition}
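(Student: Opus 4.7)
My plan is to exploit the specific form of the test function $g = (h'-1)\sin\theta$ together with the Euler--Lagrange equation \eqref{eq:EL_h} satisfied by $h$. Differentiating $g$ and substituting $h''$ from \eqref{eq:EL_h} yields an explicit identity for $g'$ that allows me to integrate by parts the two non-derivative terms in \eqref{eq:second_variation_E}, matching them against the $(g')^2$ term and collapsing the expression into a single integral with no derivative of $g$ remaining. Hemispheric symmetry then reduces the integration to $[0,\pi/2]$.

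I would then introduce the auxiliary function $\lambda(\theta,h) = \cos 2\theta - \cos 2h - \kappa\sin(2h-2\theta)\sin\theta\cos\theta$ and verify by trigonometric identities that $\lambda \geq 0$ on the portion of $W_1$ with $\theta \leq \pi/4$ whenever $\kappa \geq 4$. Combined with $g \leq 0$ (which follows from \cref{lemma:bound_derivative_h}), this produces a negative contribution from $[0,\pi/4]$ and reduces the problem to bounding three integrals over $[\pi/4,\pi/2]$: a trigonometric piece $I_1$ involving $\cos 2\theta\, g$, a piece $I_2$ involving $-\cos 2h\, g$, and the decisive $\kappa$-piece $I_3 = \int \sin(2h-2\theta)\,g\sin\theta\cos\theta \, \mathrm{d}\theta$.

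For $I_1$ and $I_2$ I would insert the definition of $g$ and integrate by parts, using the wedge condition $\pi \leq h \leq \pi + \theta$ to bound $I_1$ by a universal constant, and using Jensen's inequality together with the energy bound $E(h) \leq C\sqrt{\kappa}$ from \eqref{eq:energy_bound} to obtain $I_2 \lesssim \kappa^{-1/4}$. The main obstacle is showing $\kappa I_3 \gtrsim \sqrt{\kappa}$, which must dominate both positive contributions. To this end I would introduce the first transition point $\theta_\kappa \in (\pi/4,\pi/2)$ characterized by $h(\theta_\kappa) = 3\pi/4 + \theta_\kappa$ and prove two-sided bounds showing that $\delta_\kappa = \pi/2 - \theta_\kappa$ is of order $1/\sqrt{\kappa}$. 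The upper bound again follows from the energy bound together with the monotonicity of $\sin^2(h-\theta)\sin\theta$ from \cref{cor:monotonicity_h}. The lower bound is the delicate step: I would Taylor-expand $h$ at $\pi/2$, use the bound $-h'(\pi/2) \lesssim \sqrt{\kappa}$ from \cref{lemma:bound_derivative_h_2}, and control the remainder via the EL equation after observing that on $[\theta_\kappa,\pi/2]$ the graph is confined to $W_1 \cap \{h \leq \pi/2 + 2\theta\}$, which forces $h'' \leq -\cot\theta\, h'$. Assembling all three bounds yields $\delta^2 E[h](g) \leq C_1 + C_2\kappa^{-1/4} - C_3\sqrt{\kappa}$, negative for $\kappa$ sufficiently large.
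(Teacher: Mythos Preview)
Your proposal is correct and follows essentially the same approach as the paper's proof, matching it step by step: the integration-by-parts collapse of $\delta^2 E[h](g)$ to an integral of $\lambda(\theta,h(\theta))\,g$, the reduction to $[\pi/4,\pi/2]$ via the sign of $\lambda$ on $W_1\cap\{\theta\leq\pi/4\}$, the three-integral decomposition with $I_1$ bounded by a constant, $I_2\lesssim\kappa^{-1/4}$ via Jensen and the energy bound, and $\kappa I_3\gtrsim\sqrt{\kappa}$ via the transition point $\theta_\kappa$ with the two-sided estimate $\delta_\kappa\sim\kappa^{-1/2}$ obtained exactly as you describe.
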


\begin{proof}[Proof of \cref{theo:saddle_point}]
    We let $\kappa_0\geq 4$ be the constant from \cref{prop:second_variation_negative} and let $\kappa \geq \kappa_0$ be arbitrary but fixed. We set $\bm{m}_\kappa = \bm{m}$ to be the axisymmetric critical point of $\mathcal{E}$ with profile $h_\kappa = h$ obtained from the initial profile $h_{0,\kappa}$ and \cref{prop:no_blow_up}. We need to show that in any $H^1$-neighborhood of $\bm{m}$ there exist two elements where one has lower and the other has higher energy than $\bm{m}$. 
    
    The energy is continuous in $H^1$ and strictly decreasing along the flow as the initial map is not a solution of \eqref{eq:EL}. Therefore, we find for any $\tilde{\varepsilon} > 0$ an element $\bm{m}_+$ with $\|\bm{m}-\bm{m}_+ \|_{H^1} < \tilde{\varepsilon}$ and $\mathcal{E}(\bm{m}_+) > \mathcal{E}(\bm{m})$. Next, we set $\bm{m}_-$ to be the axisymmetric map with the profile $h_{\varepsilon} = h_\kappa + \varepsilon g_\kappa$, where $g_\kappa$ is the function defined in \eqref{eq:definition_g}. Then, according to \cref{prop:second_variation_negative}, there exists a constant $M>0$ such that due to \eqref{eq:energy_difference_hg_eps} we have
    \begin{equation*}
        E(h_{\varepsilon}) - E(h) \leq - M \varepsilon^2 + o(\varepsilon^2) < 0  \, 
    \end{equation*}
    for $\varepsilon$ small enough. This implies $\mathcal{E}(\bm{m}_-) < \mathcal{E}(\bm{m})$. Consequently, it remains to show that $\|\bm{m} - \bm{m}_-\|_{H^1} < \tilde{\varepsilon}$ for $\varepsilon$ small. Using trigonometric identities, we obtain
    \begin{equation*}
    \left|\bm{m}-\bm{m}_{-}\right|^{2} =\left|\left(\begin{array}{c}
    \cos \varphi\left(\sin h-\sin h_{\varepsilon}\right) \\
    \sin \varphi\left(\sin h-\sin h_{\varepsilon}\right) \\
    \cos h-\cos h_{\varepsilon}
    \end{array}\right)\right|^{2} \leq\left|h-h_{\varepsilon}\right|^{2} = \varepsilon^2 |g|^2\, ,
    \end{equation*}
    For the gradient norm we split it into
    \begin{equation*}
    \left|\nabla \bm{m}-\nabla \bm{m}_{-}\right|^{2}=\left|\partial_{\theta}\left(\bm{m}-\bm{m}_{-}\right)\right|^{2}+\frac{1}{\sin ^{2} \theta}\left|\partial_{\varphi}\left(\bm{m}-\bm{m}_{-}\right)\right|^{2}
    \end{equation*}
    and compute the terms individually:
    \begin{align*}
        \left|\partial_{\theta}\left(\bm{m}-\bm{m}_{-}\right)\right|^{2} &=\left|\left(\begin{array}{c}
    \cos \varphi\left(h^{\prime} \cos h-h_{\varepsilon}^{\prime} \cos h_{\varepsilon}\right) \\
    \sin \varphi\left(h^{\prime} \cos h-h_{\varepsilon}^{\prime} \cos h_{\varepsilon}\right) \\
    -h^{\prime} \sin h+h_{\varepsilon}^{\prime} \sin h_{\varepsilon}
    \end{array}\right)\right|^{2} \\
    &\leq\left|h^{\prime}-h_{\varepsilon}^{\prime}\right|^{2}+|h^{\prime} h_{\varepsilon}^{\prime}|\left|h-h_{\varepsilon}\right|^{2} \leq C\varepsilon^2  \, ,
    \end{align*}
    and
    \begin{equation*}
        \frac{1}{\sin ^{2} \theta} \left\lvert\, \partial_{\varphi}(\bm{m}-\bm{m}_{-})\right\rvert^{2} =\frac{1}{\sin ^{2} \theta}\left(\sin h-\sin h_{\varepsilon}\right)^{2} \leq \varepsilon^2 \frac{|g|^{2}}{\sin ^{2} \theta} = \varepsilon^2 |h'-1|^2 \leq C \varepsilon^2 \, ,
    \end{equation*}
    where we used that $h$ and $g$ are smooth and thus are bounded and have bounded derivatives. Combining these, we gain
    \begin{equation*}
        \left\|\bm{m}-\bm{m}_{-}\right\|_{H^{1}}^2  \leq C \varepsilon^2 < \tilde{\varepsilon}^2
    \end{equation*}
    for $\varepsilon$ small enough. Consequently, $\bm{m}_\infty$ is a saddle point of the functional $\mathcal{E}$.    
\end{proof}

\subsection{Saddle points of second type. Proof of Theorem 2}

The idea of the proof of \cref{theo:saddle_point_2} is twofold. In the first step, we repeat the arguments of the previous section and show that the critical points obtained in \cref{rema:hemispheric_wedge_condition_3} are saddle points for every $\kappa \geq 4$. In a second step, we depart from the critical point obtained in \cref{prop:kappa_4_solution} for $\kappa = 4$ and show that it is a saddle point of $\mathcal{E}$, and then by perturbation also for values close to 4.

\subsubsection{Saddle points for $\kappa > 4$}

We fix $\kappa > 4$ and we set $h = h_\kappa$ to be the smooth profile solution of \eqref{eq:EL_h} obtained from the initial profile $\theta \mapsto 2 \theta$ as in \cref{rema:hemispheric_wedge_condition_3}. Then $h$ satisfies the wedge condition \eqref{eq:hemispheric_wedge_condition_2}, too. We define the same map $g = g_\kappa\colon [0,\pi] \to \mathbb{R}$ as in \eqref{eq:definition_g} for the perturbation of the profile. Then, by \cref{lemma:bound_derivative_h_3}, we have $g(\theta) \geq 0$ for all $\theta \in [0,\pi]$. This observation suffices to show that the second variation of $\mathcal{E}$ at $h$ in direction $g$ is negative.

\begin{proposition}\label{prop:second_variation_negative_2}
    For all $\kappa \geq 4$ we have
    \begin{equation*}
        \delta^2 E_\kappa[h_\kappa](g_\kappa) < 0 \, .
    \end{equation*}
    \begin{proof}
        Since the computations leading to \eqref{eq:second_variation_g} do not depend on the details of the profile $h$, we obtain the same expression
        \begin{equation*}
            \delta^2 E[h](g) = 2 \int_0^{\frac{\pi}{2}} \lambda(\theta, h(\theta)) g \, \mathrm{d} \theta \, ,
        \end{equation*}
        with the function $\lambda$ defined as in \eqref{eq:lambda_function}. Using trigonometric identities, one shows that $\lambda$ is nonpositive on the whole set $W_2$ as long as $\kappa \geq 4$. Consequently, we have $\lambda(\theta, h(\theta)) \leq 0$ for all $\theta \in [0,\frac{\pi}{2}]$ by the wedge condition. Since $g \geq 0$, we find that the integrand is nonpositive. Moreover, it is also not constantly 0 as $h$ and $h'$ are not constant. 
    \end{proof}
\end{proposition}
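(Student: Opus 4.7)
The plan is to follow the proof of \cref{prop:second_variation_negative} verbatim through the identity \eqref{eq:second_variation_g}. That derivation used only that $h$ solves \eqref{eq:EL_h} and that $g$ is defined by \eqref{eq:definition_g}; neither the specific wedge condition nor the sign of $g$ played a role. Hence for the present $h = h_\kappa \in H_{0,2}$ and $g = g_\kappa$ we still have
\begin{equation*}
    \delta^2 E[h](g) = 2 \int_0^{\pi/2} \lambda(\theta, h(\theta)) \, g(\theta) \, \mathrm{d}\theta,
\end{equation*}
with $\lambda$ defined as in \eqref{eq:lambda_function}.

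Compared with \cref{prop:second_variation_negative}, both signs are now reversed. By \cref{lemma:bound_derivative_h_3} we have $h' \geq 1$ on $[0,\pi]$, so $g = (h'-1)\sin\theta \geq 0$ on $[0,\pi/2]$. Thus it suffices to show that $\lambda(\theta, h(\theta)) \leq 0$ along the profile, with strict inequality on a set of positive measure. By the wedge condition \eqref{eq:hemispheric_wedge_condition_2} the graph of $h$ lies in $W_2 = \{(x,y) : 0 \leq x \leq \pi/2,\ x \leq y \leq 2x\}$, so the task reduces to the pointwise trigonometric inequality $\lambda(x,y) \leq 0$ on $W_2$ for every $\kappa \geq 4$.

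For the trigonometric inequality I would substitute $u = 2x$ and $s = 2(y-x) \in [0, u]$ to rewrite $\lambda = \cos u\,(1 - \cos s) + (1 - \kappa/2) \sin u \sin s$. On the half $u \in [\pi/2, \pi]$ both summands are manifestly $\leq 0$, so $\lambda \leq 0$ there. On the half $u \in [0, \pi/2]$ one uses that $\tan(s/2)$ is monotone in $s$ to reduce the claim, after dividing by $\sin u \sin s > 0$, to the worst case $s = u$, which in turn is equivalent to the elementary inequality $\tan^2(u/2) \geq 3 - \kappa$, trivially satisfied for $\kappa \geq 4$. The threshold $\kappa = 4$ appears sharply on the upper edge $y = 2x$, where a direct computation yields $\lambda(x, 2x) = 2\sin^2 x\,[(4-\kappa)\cos^2 x - 1]$, echoing the role of $\kappa = 4$ in \cref{prop:kappa_4_solution}.

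Strict negativity of the integral follows because the integrand does not vanish identically: since $h(0) = 0$ and $h(\pi/2) = \pi$, we have $\int_0^{\pi/2} h' \,\mathrm{d}\theta = \pi$, which together with $h' \geq 1$ forces $h' > 1$ on a set of positive measure, hence $g > 0$ there. On this set $\lambda(\theta, h(\theta)) < 0$ except possibly where $(\theta, h(\theta))$ lies on an edge of $W_2$, and $h$ cannot coincide with $\theta$ or with $2\theta$ on an interval (the former contradicts the boundary values, and the latter solves \eqref{eq:EL_h} only when $\kappa = 4$). The main obstacle throughout is the pointwise verification $\lambda \leq 0$ on $W_2$; once that bookkeeping is carried out, every other step is immediate.
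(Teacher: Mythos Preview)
Your proof is correct and follows essentially the same approach as the paper: reuse the identity \eqref{eq:second_variation_g}, invoke $g\ge 0$ from \cref{lemma:bound_derivative_h_3}, and verify $\lambda\le 0$ on $W_2$. The paper merely asserts the last step ``using trigonometric identities,'' whereas you carry it out explicitly via the substitution $u=2x$, $s=2(y-x)$ and the reduction to $\tan^2(u/2)\ge 3-\kappa$; your edge computation $\lambda(x,2x)=2\sin^2 x\bigl[(4-\kappa)\cos^2 x-1\bigr]$ is also correct and makes the role of $\kappa=4$ transparent. Your strictness argument is likewise sound (and more detailed than the paper's one-line remark): on the set where $g>0$ one has $h'>1$, which is incompatible with $h(\theta)=\theta$ since $h-\theta\ge 0$ would then fail to have a local minimum there, so $\lambda(\theta,h(\theta))<0$ on that set.
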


\begin{cor} \label{cor:saddle_point_larger_4}
    For all $\kappa > 4$ the map $\bm{m}_\kappa$ defined by its profile $h_\kappa$ is a saddle point of $\mathcal{E}$.
    \begin{proof}
        The proof is analogous to the one of \cref{theo:saddle_point}.
    \end{proof} 
\end{cor}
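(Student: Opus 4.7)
The plan is to mirror the proof of \cref{theo:saddle_point} almost verbatim, simply swapping in \cref{prop:second_variation_negative_2} in place of \cref{prop:second_variation_negative}. Concretely, I need to exhibit, within every $H^1$-neighborhood of $\bm{m}_\kappa$, both an element of strictly lower energy and an element of strictly higher energy.

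For the lower-energy side, I would take the admissible perturbation $h_\varepsilon = h_\kappa + \varepsilon g_\kappa$, where $g_\kappa$ is the function defined in \eqref{eq:definition_g}; since $g_\kappa(0) = g_\kappa(\pi) = 0$, this stays in the proper function class. The Taylor expansion \eqref{eq:energy_difference_hg_eps} combined with \cref{prop:second_variation_negative_2} then gives $E(h_\varepsilon) < E(h_\kappa)$ for all sufficiently small $\varepsilon > 0$, so the axisymmetric field $\bm{m}_-$ with profile $h_\varepsilon$ satisfies $\mathcal{E}(\bm{m}_-) < \mathcal{E}(\bm{m}_\kappa)$. The $H^1$-bound $\|\bm{m}_\kappa - \bm{m}_-\|_{H^1} \leq C\varepsilon$ follows from exactly the pointwise estimates performed at the end of the proof of \cref{theo:saddle_point}, using only that $h_\kappa$ and $g_\kappa$ are smooth and hence have uniformly bounded derivatives.

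For the higher-energy side, I would invoke the flow itself. Here the assumption $\kappa > 4$ is essential: the computation in \cref{prop:kappa_4_solution} shows that the initial profile $\theta \mapsto 2\theta$ yields residual $(2 - \kappa/2)\sin 2\theta \not\equiv 0$ in \eqref{eq:EL_h}, so the initial map is not stationary and the SMHF strictly decreases $\mathcal{E}$. Since the flow trajectory converges in $H^1$ to $\bm{m}_\kappa$ by \cref{rema:hemispheric_wedge_condition_3} (together with \cref{theo:flow_existence}) and $\mathcal{E}$ is $H^1$-continuous, choosing the time $t$ large enough produces $\bm{m}_+ \coloneqq \bm{m}(\cdot, t)$ with $\|\bm{m}_\kappa - \bm{m}_+\|_{H^1} < \tilde\varepsilon$ and $\mathcal{E}(\bm{m}_+) > \mathcal{E}(\bm{m}_\kappa)$. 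Combining both constructions, $\bm{m}_\kappa$ is neither a local minimizer nor a local maximizer of $\mathcal{E}$, hence a saddle point.

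No genuine obstacle is anticipated: the two key ingredients (negative second variation and strict flow decrease) are already available, and the remaining $H^1$-closeness is a routine pointwise computation identical to the one in \cref{theo:saddle_point}. The only subtle point, worth flagging explicitly, is the sharp role of the strict inequality $\kappa > 4$: at $\kappa = 4$ the initial profile $\theta \mapsto 2\theta$ is itself stationary, so the flow-based higher-energy construction collapses, and a genuinely different argument (carried out in the next subsection) is required to handle a neighborhood of $\kappa = 4$.
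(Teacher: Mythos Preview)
Your proposal is correct and follows exactly the route the paper intends by ``analogous to the proof of \cref{theo:saddle_point}'': swap \cref{prop:second_variation_negative_2} for \cref{prop:second_variation_negative} to get the lower-energy perturbation, use the strictly decreasing flow for the higher-energy element, and reuse the identical $H^1$-closeness computations. Your explicit remark that the strict inequality $\kappa>4$ is what keeps the initial profile $\theta\mapsto 2\theta$ nonstationary is precisely the point the paper makes immediately after the corollary.
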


Note that the heat flow argument to find a map with energy higher than the limit map fails for $\kappa = 4$ since the initial profile $\theta \mapsto 2\theta$ is already the stationary solution to the flow. 

\subsubsection{Saddle points in neighborhood of $\kappa = 4$} \label{section:saddle_point_kappa_4_nbh}

\begin{proposition} \label{prop:kappa_4_solution_saddle_point}
    For $\kappa = 4$, the axisymmetric map $\bm{m}$ defined by its profile $h(\theta) = 2\theta$ is a saddle point of $\mathcal{E}$.
    \begin{proof}
        By \cref{prop:kappa_4_solution}, it only remains to show the saddle point property. Again, it is sufficient to find test functions $g_1, g_2 \in E_{0,0}$ such that $\delta^2 E[h](g_1) < 0$ and $\delta^2 E[h](g_2) > 0$.

        Inserting the profile $h$ and the value $\kappa = 4$ into the second variation, we find by a trigonometric identity and integration by parts that
        \begin{equation*}
            \delta^2 E[h](g) = \int_0^\pi  \left\langle (-\hat{\mathcal{L}} - 4) g, g \right\rangle \sin\theta \, \mathrm{d} \theta \, ,
        \end{equation*}
        where the operator $\hat{\mathcal{L}}$ is defined by
        \begin{equation*}
            \hat{\mathcal{L}} =  \frac{1}{\sin\theta} \frac{\mathrm{d}}{\mathrm{d} \theta} \left( \sin\theta \frac{\mathrm{d}}{\mathrm{d} \theta}  \right) - \frac{1}{\sin^2\theta}\, .
        \end{equation*}
        This operator is the well-studied Sturm--Liouville operator of the general Legendre equation, see Appendix~\ref{section:sturm_liouville_legendre}. Accordingly, the eigenvalues of the shifted operator $\mathcal{L} = -\hat{\mathcal{L}} -4$ are given by $\lambda_l = l(l+1)-4$ with the same eigenfunctions as $\hat{\mathcal{L}}$. In particular, the first eigenvalue is $\lambda_1 = -2$ with eigenfunction $P_1^1(\cos\theta) = \sin \theta$ and the second eigenvalue is $\lambda_2 = 2$ with eigenfunction $P_2^1(\cos\theta) = \sin 2\theta$. Both these eigenfunctions are elements of $E_{0,0}$ and hence, $\bm{m}$ is a saddle point of $\mathcal{E}$.
    \end{proof}  
\end{proposition}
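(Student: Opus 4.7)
The plan is to compute the second variation of $E$ at $h(\theta)=2\theta$ with $\kappa=4$ explicitly, recognize the resulting quadratic form as a shift of the Sturm--Liouville operator of the associated Legendre equation, and then exploit its spectrum to produce admissible perturbations of both signs.

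First, I would substitute $h(\theta)=2\theta$ and $\kappa=4$ directly into \eqref{eq:second_variation_E}, obtaining
\begin{equation*}
    \delta^2 E[h](g) = \int_0^\pi {g'}^2\sin\theta + \left(\frac{\cos 4\theta}{\sin^2\theta} + 4\cos 2\theta\right) g^2 \sin\theta \, \mathrm{d}\theta \, .
\end{equation*}
A short trigonometric computation, using $\cos 4\theta = 1 - 8\sin^2\theta + 8\sin^4\theta$ and $4\cos 2\theta\,\sin^2\theta = 4\sin^2\theta - 8\sin^4\theta$, collapses the coefficient in parentheses to $\tfrac{1}{\sin^2\theta}-4$. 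Integrating ${g'}^2\sin\theta$ by parts (the boundary terms vanish because $g(0)=g(\pi)=0$) and factoring the integrand as the weight $\sin\theta$ times an operator applied to $g$ brings the second variation into the form $\int_0^\pi \langle (-\hat{\mathcal{L}}-4)g, g\rangle \sin\theta \, \mathrm{d}\theta$ with the operator $\hat{\mathcal{L}}$ announced in the statement.

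Next, I would invoke standard Sturm--Liouville theory for $\hat{\mathcal{L}}$, which is the associated Legendre operator with azimuthal index $m=1$: its eigenpairs on $(0,\pi)$ with weight $\sin\theta$ are $(-l(l+1),\, P_l^1(\cos\theta))$ for integer $l\geq 1$, as recorded in Appendix~\ref{section:sturm_liouville_legendre}. Shifting by $-4$ yields the spectrum $\{l(l+1)-4 : l\geq 1\}$ for $\mathcal{L}=-\hat{\mathcal{L}}-4$. In particular, the lowest eigenvalue $\lambda_1=-2$ has eigenfunction $g_1=P_1^1(\cos\theta)=\sin\theta$ while the second one $\lambda_2=2$ has eigenfunction $g_2 = P_2^1(\cos\theta) \propto \sin 2\theta$. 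Both satisfy $g_i(0)=g_i(\pi)=0$, hence lie in $E_{0,0}$ and are admissible, and they yield $\delta^2 E[h](g_1) < 0 < \delta^2 E[h](g_2)$.

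Finally, I would transfer these bounds to the axisymmetric maps via \eqref{eq:energy_difference_hg_eps}: for $\varepsilon$ sufficiently small, the axisymmetric maps $\bm{m}_{\varepsilon,i}$ with profile $h+\varepsilon g_i$ satisfy $\mathcal{E}(\bm{m}_{\varepsilon,1}) < \mathcal{E}(\bm{m}) < \mathcal{E}(\bm{m}_{\varepsilon,2})$, and the same pointwise $H^1$-estimate appearing at the end of the proof of \cref{theo:saddle_point} shows $\|\bm{m}-\bm{m}_{\varepsilon,i}\|_{H^1}\to 0$ as $\varepsilon\to 0$. Hence both perturbed maps lie in any prescribed $H^1$-neighborhood of $\bm{m}$, and the saddle point property follows. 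The main conceptual hurdle is not any single calculation but recognizing that the Jacobi operator at this particular critical point reduces to the classical associated Legendre operator so that the spectrum is explicit; everything else is trigonometric bookkeeping and the Taylor argument already used for \cref{theo:saddle_point}.
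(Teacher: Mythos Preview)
Your proposal is correct and follows essentially the same route as the paper: substitute $h=2\theta$, $\kappa=4$ into the second variation, reduce the coefficient via trigonometric identities to $\sin^{-2}\theta-4$, recognize the resulting quadratic form as that of the shifted associated Legendre operator $-\hat{\mathcal{L}}-4$, and read off the first two eigenpairs $(-2,\sin\theta)$ and $(2,\sin 2\theta)$ to obtain admissible perturbations of opposite sign. Your final paragraph spelling out the transfer to $\mathcal{E}$ via \eqref{eq:energy_difference_hg_eps} and the $H^1$-closeness estimate is exactly what underlies the paper's opening claim that ``it is sufficient to find test functions $g_1,g_2\in E_{0,0}$'' with second variations of opposite sign; the paper simply takes this as already established from the proof of \cref{theo:saddle_point}.
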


The operator $\hat{\mathcal{L}}$ will play a key role in the following. By standard theory on self-adjoint operators, we know that its maximal domain $D(\hat{\mathcal{L}})$, defined in \eqref{eq:domain_legendre_operator}, endowed with the graph norm
\begin{equation*}
    \|f\|_D = \|f\|_{L_{\sin}^2} + \|\hat{\mathcal{L}} f \|_{L_{\sin}^2} \, ,
\end{equation*}
is a Banach space.

\begin{lemma} \label{prop:kappa_4_perturbation_sol}
    There exists a neighborhood $I \subset \mathbb{R}_{>0}$ of $4$ such that for all $\kappa \in I$ there exists a map $h_\kappa \in E_{0,2}$ that is a solution to \eqref{eq:EL_h} for that respective $\kappa$. The map $\kappa \mapsto h_\kappa$ is continuously differentiable as map $(I, |\cdot|) \to (E_{0,2}, \| \cdot \|_D)$. 
    \begin{proof}
        To use the implicit function theorem, we first rewrite \eqref{eq:EL_h} as an equation for the function $\eta \in D(\hat{\mathcal{L}})$ using $h_\eta = 2\theta + \eta$:
        \begin{align*}
            &\phantom{{}={}}h_\eta'' + \frac{\cos\theta}{\sin\theta} h_\eta' - \frac{\sin 2h_\eta}{2\sin^2\theta} - \frac{\kappa}{2} \sin(2h_\eta-2\theta) \\
            &= \eta'' + \frac{\cos\theta}{\sin\theta} \eta' + 2 \frac{\cos\theta}{\sin\theta} - \left( 4\frac{\cos\theta}{\sin\theta} + \left(\kappa - 4\right) \sin2\theta \right) \frac{\cos 2\eta}{2} \\
            &\phantom{{}={}}+ \left(4 - \frac{1}{\sin^2\theta} - (\kappa-4)\cos 4\theta \right) \frac{\sin2\eta}{2} \eqqcolon T(\eta,\kappa) \, , \addtocounter{equation}{1}\tag{\theequation} \label{eq:definition_T}
        \end{align*}
        where $T\colon D(\hat{\mathcal{L}}) \times \mathbb{R} \to L_{\sin}^2([0,\pi])$ is the operator to which we will apply the implicit function theorem. By construction and \cref{prop:kappa_4_solution}, we know that $T(0,4) = 0$. We compute the partial derivative in direction $g \in D(\hat{\mathcal{L}})$
        \begin{align*}
            \frac{\mathrm{d}}{\mathrm{d} t} T(\eta+tg, \kappa) \Big|_{t=0} &= g'' + \frac{\cos\theta}{\sin\theta} g' + \left( 4\frac{\cos\theta}{\sin\theta} + \left(\kappa - 4\right) \sin2\theta \right) \sin 2\eta \; g\\
            &\phantom{{}={}}+ \left(4 - \frac{1}{\sin^2\theta} - (\kappa-4) \cos4\theta \right) \cos2\eta \; g \\
            & \eqcolon \mathcal{L}(\eta,\kappa) g \, . \addtocounter{equation}{1}\tag{\theequation} \label{eq:definition_Lcal}
        \end{align*}
        We see that this derivative is a linear operator and that $\mathcal{L}(0,4)$ has the form
        \begin{equation*}
            \mathcal{L}(0,4) = \frac{1}{\sin\theta} \frac{\mathrm{d}}{\mathrm{d} \theta} \left( \sin\theta \frac{\mathrm{d}}{\mathrm{d} \theta}  \right) - \frac{1}{\sin^2\theta}  + 4 = \hat{\mathcal{L}} + 4 \, .
        \end{equation*}
        Since we know that $-4$ is in the resolvent set of $\hat{\mathcal{L}}$, we deduce that $\mathcal{L}(0,4)$ is a Banach space isomorphism. We claim that $T$ is continuously differentiable. As $T$ is linear in $\kappa$, it suffices to show that the partial derivative in $\eta$-direction, $\mathcal{L}(\eta,\kappa)$, is a bounded operator and that it is continuous as a map $(\eta,\kappa) \mapsto \mathcal{L}(\eta,\kappa)$. Starting from the definition of $\mathcal{L}(\eta,\kappa)$, we see that we can express it as
        \begin{align*}
            \mathcal{L}(\eta,\kappa) &= \hat{\mathcal{L}} + \left( 4\frac{\cos\theta}{\sin\theta} + \left(\kappa - 4\right) \sin2\theta \right) \sin 2\eta + \bigl(4 - (\kappa-4) \cos4\theta \bigr) \cos2\eta\\
            &\phantom{{}={}}+ \frac{1}{\sin^2\theta} \left( 1- \cos 2\eta \right) \eqqcolon \hat{\mathcal{L}} + f(\eta,\kappa) \, , \addtocounter{equation}{1}\tag{\theequation} \label{eq:def_Lcal}
        \end{align*}
        where $f(\eta,\kappa)$ is a scalar function in $L_{\sin}^2$ according to \cref{lemma:sl_better_Lp_bound} for the singular terms and by the boundedness of the non-singular terms. Consequently, for $g \in D(\hat{\mathcal{L}})$ we compute, applying \cref{lemma:sl_linf_bound},
        \begin{equation*}
            \|\mathcal{L}(\eta,\kappa)g \|_{L_{\sin}^2} \leq \|\hat{\mathcal{L}}g \|_{L_{\sin}^2} + \|f (\eta,\kappa)\|_{{L_{\sin}^2}} \|g \|_{L^\infty} \leq (1 + \|f (\eta,\kappa)\|_{L_{\sin}^2}) \|g \|_D \, .
        \end{equation*}
        Thus, we find that $\mathcal{L}(\eta,\kappa)$ is bounded. To show continuity, we use the uniform continuity of the trigonometric functions and invoke \cref{lemma:sl_better_Lp_bound,lemma:sl_linf_bound}. This yields a bound of the form
        \begin{align*}
            &\phantom{{}={}}\|\mathcal{L}(\eta_1, \kappa_1) - \mathcal{L}(\eta_2,\kappa_2) \| \leq \|f(\eta_1, \kappa_1) - f(\eta_2, \kappa_2) \|_{L_{\sin}^2} \\
            &\leq C \left( \left\| \frac{\eta_1-\eta_2}{\sin} \right\|_{L_{\sin}^2} + \left\| \frac{\eta_1-\eta_2}{\sin} \right\|_{L_{\sin}^4}^2 + (1+\kappa_2) \| \eta_1-\eta_2\|_{L^\infty} + |\kappa_1 - \kappa_2| \right) \\
            &\leq C \left( (1+\kappa_2) \| \eta_1-\eta_2 \|_{D} + \|\eta_1-\eta_2 \|_{D}^2 + |\kappa_1 - \kappa_2| \right) \, . \addtocounter{equation}{1}\tag{\theequation} \label{eq:Lcal_cont}
        \end{align*}
        Thus, the map $(\eta,\kappa)\mapsto \mathcal{L}(\eta,\kappa)$ is continuous. 
        
        As a result, we invoke the implicit function theorem (c.f.~\cite[Thm. 3.5.4]{buffoni2016}) and find neighborhoods $U \subset D(\mathcal{L})$ of $0$ and $I \subset \mathbb{R}$ of $4$, and a function $\eta\colon I \to U$, which is continuously differentiable, such that $\eta_4 = 0$ and $T(\eta_\kappa,\kappa) = 0$ for all $\kappa \in I$. Setting $h_\kappa = 2\theta + \eta_\kappa$, we find that $h_\kappa$ is a solution to \eqref{eq:EL_h} for all $\kappa \in I$. Moreover, by \cref{prop:legendre_operator}, we deduce $h_\kappa \in E_{0,2}$. Finally, since for $\kappa = 0$ there are no solutions to \eqref{eq:EL_h} in $E_{0,2}$ as the only harmonic map with mapping degree $0$ is the constant map, we assert that $I \subset \mathbb{R}_{>0}$. 
    \end{proof}
\end{lemma}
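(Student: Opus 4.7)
The plan is to apply the implicit function theorem around the known solution $(\eta,\kappa)=(0,4)$ from \cref{prop:kappa_4_solution}. First I would change variables by writing $h = 2\theta + \eta$ with $\eta \in D(\hat{\mathcal{L}})$, and define an operator
\begin{equation*}
    T\colon D(\hat{\mathcal{L}}) \times \mathbb{R} \to L^2_{\sin}([0,\pi])
\end{equation*}
whose zero set corresponds to solutions of \eqref{eq:EL_h}, by substituting $h=2\theta+\eta$ into the right-hand side of \eqref{eq:EL_h} and expanding the trigonometric terms via the angle-addition formulas. By construction and \cref{prop:kappa_4_solution} we have $T(0,4) = 0$. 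Endowed with the graph norm of $\hat{\mathcal{L}}$, the domain $D(\hat{\mathcal{L}})$ is a Banach space, so this is the natural setting in which the linearization will be elliptic.

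Next I would compute the partial derivative $\mathcal{L}(\eta,\kappa) \coloneqq D_\eta T(\eta,\kappa)$ by differentiating in the direction of $g \in D(\hat{\mathcal{L}})$, which gives a second-order linear operator. Evaluating at $(0,4)$, the quadratic $\eta$-dependent terms vanish and the singular $1/\sin^2\theta$ term combines with the advective part to produce exactly $\mathcal{L}(0,4) = \hat{\mathcal{L}} + 4$. Since the spectrum of $\hat{\mathcal{L}}$ consists of $-l(l+1)$ for integers $l\geq 1$ (see Appendix~\ref{section:sturm_liouville_legendre}), the value $-4$ lies in the resolvent set, so $\hat{\mathcal{L}}+4\colon D(\hat{\mathcal{L}}) \to L^2_{\sin}$ is a Banach space isomorphism.

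The main technical step, which I expect to be the chief obstacle, is verifying that $T$ is continuously Fr\'echet differentiable. Since $T$ is affine in $\kappa$, this reduces to showing $(\eta,\kappa)\mapsto \mathcal{L}(\eta,\kappa)$ is continuous with values in the bounded operators from $(D(\hat{\mathcal{L}}),\|\cdot\|_D)$ to $L^2_{\sin}$. Writing $\mathcal{L}(\eta,\kappa) = \hat{\mathcal{L}} + f(\eta,\kappa)$ with $f$ a multiplication operator collecting all the lower-order and singular corrections, including the delicate factor $(1-\cos 2\eta)/\sin^2\theta$, I would bound $\|f(\eta,\kappa)g\|_{L^2_{\sin}}$ by $\|f(\eta,\kappa)\|_{L^2_{\sin}}\|g\|_{L^\infty}$ and then invoke the $L^\infty$ embedding $\|g\|_{L^\infty}\lesssim \|g\|_D$ together with the $L^p$-type bounds on $\eta/\sin$ provided by the Sturm--Liouville theory (\cref{lemma:sl_better_Lp_bound,lemma:sl_linf_bound}) to absorb the singularity. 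Continuity of $f$ in $(\eta,\kappa)$ then follows from uniform continuity of the trigonometric functions and the same weighted estimates, yielding a bound of the form $C\bigl((1+\kappa_2)\|\eta_1-\eta_2\|_D + \|\eta_1-\eta_2\|_D^2 + |\kappa_1-\kappa_2|\bigr)$.

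Once the hypotheses are verified, the implicit function theorem produces a $C^1$ curve $\kappa \mapsto \eta_\kappa$ defined on a neighborhood $I$ of $4$, with $\eta_4=0$ and $T(\eta_\kappa,\kappa)=0$. Setting $h_\kappa = 2\theta + \eta_\kappa$ yields the desired family of solutions to \eqref{eq:EL_h}, and the regularity of elements of $D(\hat{\mathcal{L}})$ (together with the boundary behavior encoded in the definition of this domain in Appendix~\ref{section:sturm_liouville_legendre}) ensures $h_\kappa \in E_{0,2}$. Finally, to localize $I \subset \mathbb{R}_{>0}$, I would observe that at $\kappa = 0$ equation \eqref{eq:EL_h} reduces to the harmonic map equation between two-spheres; the only solution with boundary data $(0,2\pi)$ corresponds to a degree-zero harmonic map $\mathbb{S}^2\to\mathbb{S}^2$, which must be constant, contradicting $h(0)=0$, $h(\pi)=2\pi$. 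Thus $0\notin I$ and we may shrink $I$ into $\mathbb{R}_{>0}$.
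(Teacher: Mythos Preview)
Your proposal is correct and follows essentially the same approach as the paper's proof: the same change of variables $h=2\theta+\eta$, the same operator $T$ with linearization $\mathcal{L}(0,4)=\hat{\mathcal{L}}+4$, the same decomposition $\mathcal{L}=\hat{\mathcal{L}}+f$ with the singular $(1-\cos 2\eta)/\sin^2\theta$ term handled via \cref{lemma:sl_better_Lp_bound,lemma:sl_linf_bound}, and the same argument that $0\notin I$ via the absence of nonconstant degree-zero harmonic maps. Even your continuity estimate matches the paper's bound term for term.
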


\begin{proposition} \label{prop:saddle_point_kappa_4_nbh}
    There exist $0 < \kappa_1 < 4 < \kappa_2$ with $(\kappa_1, \kappa_2)\subset I$ such that for all $\kappa \in (\kappa_1,\kappa_2)$ the axisymmetric map $\bm{m}_\kappa$ with profile $h_\kappa$ provided by \cref{prop:kappa_4_perturbation_sol} is a saddle point of the energy functional $\mathcal{E}$.
    \begin{proof}
        Again, it suffices to find test functions $g_1,g_2\in E_{0,0}$ such that $\delta^2 E[h_\kappa](g_1) < 0$ and $\delta^2 E[h_\kappa](g_2) > 0$. For $h_\kappa = 2\theta + \eta_\kappa$ and $g\in D(\hat{\mathcal{L}})$ we write, using a calculation similar to the one in \eqref{eq:definition_Lcal},   
        \begin{equation*}
            \delta^2 E[h_\kappa](g) = \int_0^\pi \left\langle -\mathcal{L}(\eta_\kappa,\kappa) g,g \right\rangle \sin\theta \, \mathrm{d} \theta \, .
        \end{equation*}
        The map $(\eta,\kappa) \mapsto \mathcal{L}(\eta,\kappa)$ is continuously differentiable with respect to the operator norm induced by the norm on $D(\hat{\mathcal{L}})\times \mathbb{R}$. This follows from the fact that both partial derivatives are continuous. For the $\kappa$-direction this is due to $\mathcal{L}$ being linear in $\kappa$. Existence and continuity of the directional derivative in the $\eta$-direction can be shown as in \eqref{eq:Lcal_cont}, using the chain rule and the differentiability of $\cos$ and $\sin$, as well as invoking \cref{lemma:sl_better_Lp_bound,lemma:sl_linf_bound}. 
        
        Combining this observation with the continuous differentiability of the map $\kappa \mapsto \eta_\kappa$, we deduce by the chain rule that the map $\kappa \mapsto \mathcal{L}(\eta_\kappa,\kappa)$ is continuously differentiable. Consequently, by standard theory on perturbation of simple eigenvalues (c.f.~\cite[Prop. 3.6.1]{buffoni2016}), recalling the eigenvalues of the unperturbed operator $-\mathcal{L}(0,4)$, we find that there exist $\kappa_1 < 4 < \kappa_2$ such that for all $\kappa \in (\kappa_1,\kappa_2)$ the first eigenvalue $\lambda_1(\kappa)$ of $-\mathcal{L}(\eta_\kappa,\kappa)$ with eigenfunction $g_1(\kappa)$ is still negative and the second eigenvalue $\lambda_2(\kappa)$ with eigenfunction $g_2(\kappa)$ is still positive. Consequently, $\bm{m}_\kappa$ is a saddle point of $\mathcal{E}$ for $\kappa \in (\kappa_1,\kappa_2)$.
    \end{proof}
\end{proposition}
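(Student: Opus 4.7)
The plan is to promote the eigenvalue analysis at $\kappa = 4$ carried out in \cref{prop:kappa_4_solution_saddle_point} to a full neighborhood via a perturbation argument, using the self-adjoint operator framework already set up. Since I have a $C^1$ family of solutions $h_\kappa = 2\theta + \eta_\kappa$ from \cref{prop:kappa_4_perturbation_sol} and the operator $\mathcal{L}(\eta,\kappa)$ from \eqref{eq:definition_Lcal} which appears as the linearization of the Euler--Lagrange equation, the main observation is that the second variation of $E$ at $h_\kappa$ is exactly the quadratic form of $-\mathcal{L}(\eta_\kappa,\kappa)$. Hence producing test directions with second variation of both signs reduces to showing that this operator remains sign-indefinite for $\kappa$ near $4$.

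First I would verify by direct calculation, mirroring the derivation of \eqref{eq:definition_Lcal}, that for every $g\in D(\hat{\mathcal{L}})$
\begin{equation*}
    \delta^2 E[h_\kappa](g) = \int_0^\pi \langle -\mathcal{L}(\eta_\kappa,\kappa) g, g\rangle \sin\theta \, \mathrm{d}\theta\, .
\end{equation*}
At $\kappa = 4$ this reduces to $\int_0^\pi \langle (-\hat{\mathcal{L}} - 4) g, g\rangle \sin\theta\,\mathrm{d}\theta$, whose spectrum from \cref{prop:kappa_4_solution_saddle_point} consists of the simple, isolated eigenvalues $\lambda_l^{(0)} = l(l+1) - 4$; in particular $\lambda_1^{(0)} = -2$ and $\lambda_2^{(0)} = 2$, with eigenfunctions $\sin\theta, \sin 2\theta \in E_{0,0}$.

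Next I would establish that $\kappa \mapsto \mathcal{L}(\eta_\kappa,\kappa)$ is continuously differentiable as a map from $I$ into the space of bounded operators $D(\hat{\mathcal{L}}) \to L^2_{\sin}([0,\pi])$. The dependence on $\kappa$ is affine, while the $\eta$-dependence is controlled via \cref{lemma:sl_better_Lp_bound,lemma:sl_linf_bound} exactly as in the continuity estimate \eqref{eq:Lcal_cont} (the only subtlety being the singular factor $1/\sin^2\theta$, absorbed through $\|(\eta_1-\eta_2)/\sin\|_{L^2_{\sin}} \lesssim \|\eta_1-\eta_2\|_D$). Composing with the $C^1$ map $\kappa \mapsto \eta_\kappa$ from \cref{prop:kappa_4_perturbation_sol} gives the claim by the chain rule.

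Having an operator-norm continuous family perturbing a self-adjoint operator whose first two eigenvalues are simple and isolated, standard perturbation theory for simple eigenvalues (e.g.\ \cite[Prop.~3.6.1]{buffoni2016}) delivers neighbourhoods of $4$ in which the lowest two eigenvalues $\lambda_1(\kappa), \lambda_2(\kappa)$ and their eigenfunctions $g_1(\kappa), g_2(\kappa) \in D(\hat{\mathcal{L}})$ depend continuously on $\kappa$. By continuity, $\lambda_1(\kappa) < 0 < \lambda_2(\kappa)$ on some interval $(\kappa_1, \kappa_2)\ni 4$. The eigenfunctions automatically satisfy $g_i(0) = g_i(\pi) = 0$ from the Sturm--Liouville boundary behaviour encoded in $D(\hat{\mathcal{L}})$, so they qualify as admissible perturbations in $E_{0,0}$; then $\delta^2 E[h_\kappa](g_1(\kappa)) < 0$ and $\delta^2 E[h_\kappa](g_2(\kappa)) > 0$, and the same $H^1$-smallness argument used at the end of the proof of \cref{theo:saddle_point} upgrades this into the saddle point property. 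The only real obstacle is bookkeeping the $C^1$-regularity of $\mathcal{L}(\eta,\kappa)$ around the singular endpoints $\theta = 0,\pi$; this is handled purely by the Sturm--Liouville estimates of the appendix, and no new analytical input is required.
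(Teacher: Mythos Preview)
Your proposal is correct and follows essentially the same route as the paper: express $\delta^2 E[h_\kappa]$ as the quadratic form of $-\mathcal{L}(\eta_\kappa,\kappa)$, verify $C^1$-dependence of the operator family in $\kappa$ via the estimates \eqref{eq:Lcal_cont} and \cref{lemma:sl_better_Lp_bound,lemma:sl_linf_bound}, and then invoke perturbation theory for simple eigenvalues to keep $\lambda_1(\kappa)<0<\lambda_2(\kappa)$ near $\kappa=4$. The only cosmetic additions you make---explicitly noting that the perturbed eigenfunctions lie in $E_{0,0}$ via \cref{prop:legendre_operator} and referencing the $H^1$-smallness step from the proof of \cref{theo:saddle_point}---are implicit in the paper's phrase ``Again, it suffices to find test functions\ldots''.
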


\begin{remark}
    For $\kappa \in (4,\kappa_2)$ we have now two ways to obtain a critical point of the energy functional $\mathcal{E}$. It is an interesting question if these two methods yield the same critical points. Moreover, it is not guaranteed that the critical points obtained via the implicit function theorem still obey the hemispheric symmetry. Our conjecture is that for every $\kappa \geq 4$ there only exists one critical point of the energy functional $\mathcal{E}$ with profiles in $H_{0,2}$ and thus the two methods yield the same critical points. This is backed by numerical evidence.
\end{remark}

\subsubsection{Stitching together both regimes}

\begin{proof}[Proof of \cref{theo:saddle_point_2}]
    We let $\kappa_1$ be the one from \cref{prop:kappa_4_perturbation_sol}. Then for $\kappa \in (\kappa_1,4]$ we set $h_\kappa$ to be the profile from \cref{prop:saddle_point_kappa_4_nbh}. Further, for $\kappa > 4$ we set $h_\kappa$ to be the one from \cref{cor:saddle_point_larger_4}. We denote the corresponding maps by $\bm{m}_\kappa$. We assert that $Q(\bm{m}_\kappa) = 0$ as $h_\kappa \in E_{0,2}$. Moreover, by \cref{prop:saddle_point_kappa_4_nbh} and \cref{cor:saddle_point_larger_4}, we find that $\bm{m}_\kappa$ is a saddle point of the energy functional $\mathcal{E}$ in the claimed sense. The saddle point is different from the one obtained in \cref{theo:saddle_point} since the profiles in the latter are in $H_{0,0}$ and the ones in this theorem are in $E_{0,2}$. 
\end{proof}

\section*{Acknowledgements}

DM thanks the Department of Mathematics at the University of British Columbia for its hospitality. Moreover, he thanks Maria G.\ Westdickenberg for constant support and helpful suggestions.

This work was funded by the DFG-Graduiertenkolleg \emph{Energy, Entropy, and Dissipative Dynamics (EDDy)}, project no. 320021702/GRK2326.

\appendix

\section{Variant of maximum principle}

Recall the definition of the parabolic cylinder for $U\subset \mathbb{R}^n$ and $T>0$ to be $U_T = U \times (0 ,T]$. 
\begin{lemma} \label{lemma:max_principle_variant}
    Let $I\coloneqq (a,b) \subset \mathbb{R}$ be an open bounded interval and assume $u \in C^2(I_T)\cup C(\overline{I}_T)$ satisfies
    \begin{align*}
        &0 \leq u_t - u'' + d(u,x) u' + c(u,x) u \, \\
        &u|_{t=0} = u_0 \\
        &u(a,t) = g(t), \; u(b,t) = h(t) \, ,
    \end{align*}
    where $d$ and $c$ are sufficiently smooth functions $\mathbb{R}\times\mathbb{R} \to \mathbb{R}$ and $g,h\colon [0,T] \to [0,\infty)$ are continuous functions satisfying the compatibility conditions $g(0) = u_0(a)$ and $h(0) = u_0(b)$. Additionally, assume there exists $\delta>0$ such that $c(u(x, t), x) > - \delta$ holds for all $(x,t) \in I_T$. Then, $u_0 \geq 0$ on $\overline{I}$ implies $u\geq 0$ on $I_T$.
    \begin{proof}
        We define 
        \begin{equation*}
            v(x,t) = u(x,t) e^{-\delta t} \, .
        \end{equation*}
         Then $v$ satisfies
        \begin{align*}
            &0 \leq v_t - v'' + d(u,x) v' + (c(u,x)+\delta) v \, \\
            &v|_{t=0} = u_0 \\
            &v(a,t) \geq 0, \; v(b,t) \geq 0
        \end{align*}
        and the statement follows from the usual maximum principle for parabolic equations, e.g.~\cite[7.1.4. Thm.\ 9]{evans2010}.
    \end{proof}
\end{lemma}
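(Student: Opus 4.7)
The strategy is exactly the one hinted at by the author's choice of auxiliary function: perform an exponential rescaling in $t$ to absorb the possibly-negative zero-order coefficient into a positive one, and then invoke a standard weak parabolic maximum principle on the rescaled function.

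Concretely, I set $v(x,t) = e^{-\delta t} u(x,t)$ on $\overline{I}_T$. A direct computation gives $v_t = e^{-\delta t}(u_t - \delta u)$, $v' = e^{-\delta t} u'$ and $v'' = e^{-\delta t} u''$, so multiplying the hypothesis $u_t - u'' + d(u,x)u' + c(u,x) u \geq 0$ by $e^{-\delta t}$ and rearranging yields
\begin{equation*}
    v_t - v'' + d(u,x)\, v' + \bigl(c(u,x) + \delta\bigr) v \geq 0 \quad \text{on } I_T .
\end{equation*}
By the hypothesis $c(u(x,t),x) > -\delta$, the new zero-order coefficient $\tilde{c}(x,t) \coloneqq c(u(x,t),x) + \delta$ is strictly positive on $I_T$. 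The initial and boundary data of $v$ are $v(x,0) = u_0(x) \geq 0$, $v(a,t) = e^{-\delta t} g(t) \geq 0$ and $v(b,t) = e^{-\delta t} h(t) \geq 0$, and the compatibility conditions inherited from $u$ ensure that $v \in C^2(I_T) \cap C(\overline{I}_T)$.

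The inequality satisfied by $v$ is a linear parabolic differential inequality of exactly the form covered by the weak maximum principle for bounded-domain parabolic operators with nonnegative zero-order term, as stated for example in Evans, \emph{Partial Differential Equations}, §7.1.4, Theorem 9: any subsolution (equivalently, any function satisfying the reverse inequality $-v$) whose initial and parabolic-boundary data are nonpositive is itself nonpositive, and dually, here the nonnegativity of the parabolic-boundary data of $v$ combined with the differential inequality forces $v \geq 0$ on $I_T$. Since $e^{-\delta t} > 0$, this is equivalent to $u \geq 0$ on $I_T$, which is the claim.

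The only point requiring care is that the coefficients $d(u(\cdot),\cdot)$ and $c(u(\cdot),\cdot)$ are coefficients depending on the unknown, but once $u$ is fixed they are simply bounded (locally) measurable functions of $(x,t)$ and can be treated as frozen coefficients in the cited maximum principle; the smoothness assumption on $d,c$ together with the regularity of $u$ ensures the hypotheses of that theorem are met. There is essentially no obstacle beyond the exponential substitution itself; the substitution is the whole content of the reduction, and the rest is a direct citation.
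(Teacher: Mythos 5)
Your proposal is correct and follows exactly the same route as the paper: the substitution $v = e^{-\delta t}u$ converts the zero-order coefficient to $c+\delta > 0$, after which the standard weak parabolic maximum principle (Evans, \S 7.1.4, Thm.\ 9) applied with the coefficients frozen along the given solution yields $v\geq 0$ and hence $u\geq 0$. No gaps.
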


\section{Sturm--Liouville theory: The Legendre operator} \label{section:sturm_liouville_legendre}

We consider the Sturm--Liouville problem of the associated Legendre equation for the value $m=1$ in the variable $x = \cos\theta$ on the interval $(0,\pi)$. That is, we study the equation
\begin{equation*}
    \frac{1}{\sin\theta} \frac{\mathrm{d}}{\mathrm{d} \theta} \left( \sin\theta \frac{\mathrm{d}}{\mathrm{d} \theta}  h \right) - \frac{m^2}{\sin^2\theta} h = 0 \, 
\end{equation*}
for $m=1$. This equation is well-known as the the azimuthal angle part of the Laplace equation after using separation of variables in spherical coordinates. We define the Legendre operator $\hat{\mathcal{L}}$ as
\begin{equation*}
    \hat{\mathcal{L}} = \frac{1}{\sin\theta} \frac{\mathrm{d}}{\mathrm{d} \theta}  \sin\theta \frac{\mathrm{d}}{\mathrm{d}\theta}  - \frac{1}{\sin^2\theta} \, .
\end{equation*}
In this section we briefly collect some properties of this operator resulting from the Sturm--Liouville theory. Further details can be found in~\cite{zettl2005,arfken2013}. A complete set of eigenfunctions of $\hat{\mathcal{L}}$ is given by the associated Legendre functions 
\begin{equation*}
    \Psi_l(\theta) = P_l^1(\cos\theta) = \frac{1}{2^l l!}\sin\theta \frac{\mathrm{d}^{l+1} }{\mathrm{d} x^{l+1}} (x^2-1)^l \big|_{x=\cos\theta}  \, 
\end{equation*}
for $l \in \mathbb{N}$. The corresponding discrete eigenvalues are simple and given by
\begin{equation*}
    \hat{\mathcal{L}}\Psi_l = -l(l+1) \Psi_l \, .
\end{equation*} 
Moreover, the eigenfunctions form an orthogonal basis of the Hilbert space $L_{\sin}^2((0,\pi))$, which is defined as the space of square integrable functions with respect to the measure $\sin\theta \mathrm{d} \theta$ on the interval $(0,\pi)$. 

The maximal domain $D(\hat{\mathcal{L}})$ is given by
\begin{equation} \label{eq:domain_legendre_operator}
    D(\hat{\mathcal{L}}):=\big \{f \in L_{\sin}^2((0,\pi)) : f, f'\sin \in \text{AC}_{\text{loc}}((0,\pi)), \quad \hat{\mathcal{L}}f\in L_{\sin}^2((0,\pi)) \big \} \, ,
\end{equation}
where $\text{AC}_{\text{loc}}((0,\pi))$ is the space of locally absolutely continuous functions~\cite[c.f.\ Part 4]{zettl2005}. The operator $\hat{\mathcal{L}}$ is self-adjoint on $D(\hat{\mathcal{L}})$ with respect to the inner product $\left\langle \cdot,\cdot \right\rangle _{L_{\sin}^2}$.

\begin{proposition} \label{prop:legendre_operator}
    Let $f\in D(\hat{\mathcal{L}})$. Then,
    \begin{equation*}
        f(0) = f(\pi) = 0 \quad\text{~and~}\quad \lim_{\theta\to 0} f'(\theta) \sin\theta = \lim_{\theta\to \pi} f'(\theta) \sin\theta = 0 \, .
    \end{equation*}
    \begin{proof}
        The Lagrange identity for the operator $\hat{\mathcal{L}}$ and $f,g\in D(\hat{\mathcal{L}})$ reads
        \begin{equation} \label{eq:lagrange_identity}
            g\hat{\mathcal{L}}f  - f\hat{\mathcal{L}}g  = \frac{1}{\sin\theta} \frac{\mathrm{d}}{\mathrm{d} \theta} \left( \sin\theta (f'g - fg')\right) \, .
        \end{equation}
        Setting $g = \sin$ and integrating, we find, using the fact that $g$ is the first eigenfunction of $\hat{\mathcal{L}}$, that
        \begin{equation*}
            \int_0^\pi (\hat{\mathcal{L}} - 2)f \, \sin^2\theta \, \mathrm{d} \theta = f' \sin^2\theta - f\sin\theta\cos\theta \Big|_0^\pi\, .
        \end{equation*}
        Now, since $f,g \in D(\hat{\mathcal{L}})$, we see that by the Cauchy--Schwarz inequality the left-hand side exists and thus, the limits on the right-hand side exist as well. We denote the limits by $\Phi_0(f)$ and $\Phi_\pi(f)$, respectively. We integrate the Lagrange identity again, but now from $0$ to $\theta \in (0,\pi)$, which yields
        \begin{equation} \label{eq:legendre_identity2}
            \int_0^\theta (\hat{\mathcal{L}} - 2)f \, \sin^2\tau\, \mathrm{d} \tau = f'\sin^2\theta - f\sin\theta\cos\theta - \Phi_0(f)\, .
        \end{equation}
        Dividing by $\sin^3\theta$ and rearranging gives us
        \begin{equation}\label{eq:legendre_identity1}
            \frac{\mathrm{d}}{\mathrm{d} \theta} \left( \frac{1}{\sin\theta} f \right) = \frac{\Phi_0(f)}{\sin^3\theta} + \frac{B(\theta)}{\sin\theta} \, ,
        \end{equation}
        where 
        \begin{equation*}
            B(\theta) = \frac{1}{\sin^2\theta} \int_0^\theta (\hat{\mathcal{L}} - 2)f \sin^2\tau \, \mathrm{d} \tau \, .
        \end{equation*}
        We find the following bound for $B(\theta)$, using Hölder's inequality:
        \begin{align*}
            |B(\theta)| &\leq \frac{1}{\sin^2\theta} \left( \|\hat{\mathcal{L}}f \|_{\mathrlap{L_{\sin}^2}} \quad + 2\|f \|_{L_{\sin}^2} \right) \left( \int_0^\theta \sin^3 \tau \, \mathrm{d}\tau \right)^{\;\;\mathclap{1/2}} \;\leq C \left( \|\hat{\mathcal{L}}f \|_{\mathrlap{L_{\sin}^2}} \quad + 2\|f \|_{L_{\sin}^2} \right) \, ,
        \end{align*}
        where $C>0$ is global. Integrating \eqref{eq:legendre_identity1}, now from $t \in (0,\tfrac{\pi}{2})$ to $\tfrac{\pi}{2}$, gives us
        \begin{equation*}
            f(t) = \left( f(\tfrac{\pi}{2}) - \int_t^\frac{\pi}{2} \frac{\Phi_0(f)}{\sin^3\theta} \, \mathrm{d}\theta- \int_t^\frac{\pi}{2} \frac{B(\theta)}{\sin\theta} \, \mathrm{d}\theta\right)\sin t \, .
        \end{equation*}
        The first integral can be evaluated to
        \begin{equation*}
            \int_t^\frac{\pi}{2} \frac{\Phi_0(f)}{\sin^3\theta} \, \mathrm{d}\theta = \frac{\Phi_0(f)}{2} \left( \ln \tan \left( \frac{t}{2} \right) - \frac{\cos t}{\sin^2t}\right)\, ,
        \end{equation*}
        whereas for the second integral we find, using the antiderivative of $\sin^{-1}$, the bound
        \begin{equation} \label{eq:legendre_bound_B}
            \Bigl| \int_t^\frac{\pi}{2} \frac{B(\theta)}{\sin\theta} \, \mathrm{d}\theta \Bigr| \leq C \left( \|\hat{\mathcal{L}}f \|_{L_{\sin}^2} + 2\|f \|_{L_{\sin}^2} \right) |\ln \tan \left( \frac{t}{2} \right)| \, .
        \end{equation}
        Since 
        \begin{equation*}
            \ln \tan \left( \frac{t}{2} \right) \sin t \longrightarrow 0 \, , \, \text{~as~} \,  t \to 0 \, ,
        \end{equation*}
        we see that we can write $f(t)$ as
        \begin{equation*}
            f(t) = G(t) - \frac{\Phi_0}{2} \frac{\cos t}{\sin t}\, ,
        \end{equation*}
        where $G$ is a bounded function on $[0, \tfrac{\pi}{2}]$. Now, since $f \in L_{\sin}^2((0,\pi))$, we deduce that $\Phi_0(f) = 0$ must hold as $x\mapsto \cos x \sin^{-1}x$ is not square integrable on $(0,\pi)$. Thus, we write 
        \begin{equation} \label{eq:legendre_expression_f}
            f(t) = f(\tfrac{\pi}{2}) \sin t - \sin t \int_t^\frac{\pi}{2} \frac{B(\theta)}{\sin\theta} \, \mathrm{d}\theta\, ,
        \end{equation}
        from which follows, using \eqref{eq:legendre_bound_B},
        \begin{equation*}
            f(t) \longrightarrow 0 \, , \, \text{~as~} \,  t\to 0 \, .
        \end{equation*}
        Going back to \eqref{eq:legendre_identity2}, we assert that
        \begin{equation*}
             f'(t)\sin t =  \, f(t)\cos t +  B(t)\sin t\, .
        \end{equation*}
        Recalling that $B$ is bounded, we find that
        \begin{equation*}
            \, f'(t)\sin t  \longrightarrow 0 \, , \, \text{~as~} \,  t\to 0 \, .
        \end{equation*}
        We repeat the same arguments for $t\to \pi$ by integrating \eqref{eq:legendre_identity1} from $\tfrac{\pi}{2}$ to $t$ to conclude that both $f(t) \to 0$ and $f'(t)\sin t  \to 0$ as $t \to \pi$.
    \end{proof}
\end{proposition}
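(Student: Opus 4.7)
The approach I would take exploits the fact that $\hat{\mathcal{L}}$ is a singular Sturm--Liouville operator whose symmetry is realized through a Lagrange identity, tested against the known ground-state eigenfunction $g(\theta) = \sin\theta = \Psi_1(\theta)$ with eigenvalue $-2$. This converts bulk $L_{\sin}^2$ information about $f$ and $\hat{\mathcal{L}} f$ into pointwise information about $f$ and $f' \sin\theta$ near the singular endpoints.

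First I would write down the Lagrange identity for $\hat{\mathcal{L}}$ applied to the pair $(f,g)$ with $g = \sin\theta$, and integrate from $0$ to $\theta$. Using $\hat{\mathcal{L}} g = -2 g$, this produces an identity of the shape
\begin{equation*}
f'(\theta)\sin^2\theta - f(\theta)\sin\theta\cos\theta \;=\; \Phi_0(f) + \int_0^\theta (\hat{\mathcal{L}} f + 2f)\,\sin^2\tau\,d\tau,
\end{equation*}
where $\Phi_0(f) \in \mathbb{R}$ is the (a priori formal) boundary contribution at $0$. By Cauchy--Schwarz in $L_{\sin}^2$ and the fact that $\int_0^\theta \sin^3\tau\,d\tau \lesssim \theta^4$, the integral on the right converges absolutely as $\theta \to 0^+$ and is $o(\sin^2\theta)^{1/2}$-controlled, so the limit defining $\Phi_0(f)$ exists.

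Second I would divide by $\sin^3\theta$ to recognize the left side as $(f/\sin\theta)'(\theta)$, giving $(f/\sin\theta)'(\theta) = \Phi_0(f)/\sin^3\theta + B(\theta)/\sin\theta$ with $B$ uniformly bounded by $\|\hat{\mathcal{L}} f + 2f\|_{L_{\sin}^2}$. Integrating from $\theta$ to $\pi/2$ and multiplying by $\sin\theta$ recovers $f(\theta)$, and the explicit antiderivative of $1/\sin^3$ contains a $-\tfrac{1}{2}\cos\theta/\sin^2\theta$ term which, after multiplication by $\sin\theta$, yields a $\cos\theta/\sin\theta$ contribution. Since $\cos\theta/\sin\theta \notin L_{\sin}^2$ near $0$, the constraint $f \in L_{\sin}^2$ forces $\Phi_0(f) = 0$. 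The remaining $\sin\theta \cdot \log\tan(\theta/2)$ term from the $\Phi_0$-antiderivative is now absent, and the $B$-term gives only a $\sin\theta\cdot \log\tan(\theta/2)$ contribution as well, which tends to $0$ as $\theta \to 0$. Hence $f(\theta) \to 0$.

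Third, plugging $\Phi_0(f) = 0$ back into the boundary identity and dividing by $\sin\theta$ yields
\begin{equation*}
f'(\theta)\sin\theta - f(\theta)\cos\theta \;=\; \frac{1}{\sin\theta} \int_0^\theta (\hat{\mathcal{L}} f + 2f)\,\sin^2\tau\,d\tau,
\end{equation*}
whose right side is $O(\sin\theta)$ by Cauchy--Schwarz. Combined with $f(\theta)\cos\theta \to 0$, this gives $f'(\theta)\sin\theta \to 0$ as $\theta \to 0$. The statements at $\theta = \pi$ follow identically by integrating from $\pi/2$ to $\theta$ instead, using the $\theta \mapsto \pi - \theta$ symmetry of $\hat{\mathcal{L}}$.

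The main obstacle is the borderline nature of the endpoint analysis: $1/\sin\theta$ is exactly at the critical rate for membership in $L_{\sin}^2$ near $0$, so one must track the leading singularities in the antiderivative of $1/\sin^3$ carefully enough to conclude that the $\Phi_0(f)\cos\theta/\sin\theta$ contribution genuinely violates $L_{\sin}^2$-integrability while the subleading logarithmic terms survive — this is the step where the argument cannot be softened by rougher estimates.
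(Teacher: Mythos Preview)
Your proposal is correct and follows essentially the same route as the paper: test the Lagrange identity against the ground state $g=\sin\theta$, recognize $f'\sin^2\theta - f\sin\theta\cos\theta$ as $\sin^3\theta\,(f/\sin\theta)'$, integrate back from $\pi/2$, and use that the $\Phi_0(f)\cos\theta/\sin\theta$ contribution is not in $L_{\sin}^2$ to force $\Phi_0(f)=0$; then read off $f\to 0$ and $f'\sin\theta\to 0$. Incidentally, your sign $(\hat{\mathcal{L}}f+2f)$ is the correct one --- the paper's printed $(\hat{\mathcal{L}}-2)f$ appears to be a typo, though it is immaterial to the argument.
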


The proof also delivers the estimate for the following two Sobolev-type inequalities.
\begin{lemma} \label{lemma:sl_better_Lp_bound}
    For any $p \geq 2$ and $f\in D(\hat{\mathcal{L}})$ we have
    \begin{equation*}
        \Bigl\| \frac{f}{\sin} \Bigr\|_{L_{\sin}^p} \leq C \|f \|_D \, ,
    \end{equation*}
    where the constant $C>0$ depends only on $p$.
    \begin{proof}
        For $\theta\in (0,\tfrac{\pi}{2})$, by \eqref{eq:legendre_expression_f} and the estimate in \eqref{eq:legendre_bound_B} we find that
        \begin{equation} \label{eq:legendre_bound_fsin}
            \Bigl| \frac{f(\theta)}{\sin(\theta)} \Bigr|^p \leq C \|f\|_D^p \, |\ln \tan \left( \frac{\theta}{2} \right)|^p \, .
        \end{equation}
        For $\theta \in (\tfrac{\pi}{2},\pi)$ we get a similar expression. The function
        \begin{equation*}
            \theta\mapsto \Bigl| \ln \tan \left( \frac{\theta}{2} \right) \Bigr|^p \sin\theta \, 
        \end{equation*}
        is bounded on $[0,\pi]$ and thus integrable, and therefore the claim follows with \eqref{eq:legendre_bound_fsin}.
    \end{proof}    
\end{lemma}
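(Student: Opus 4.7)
The plan is to promote the pointwise formula derived in the proof of \cref{prop:legendre_operator} into a pointwise bound on $|f(\theta)/\sin\theta|$ with only a logarithmic singularity, and then to observe that such a singularity is harmlessly absorbed by the weight $\sin\theta\,\mathrm{d}\theta$ when raised to any power $p$. The key inputs are already on the table: on $(0,\tfrac{\pi}{2})$ the identity \eqref{eq:legendre_expression_f}
\[
f(\theta) = f(\tfrac{\pi}{2})\sin\theta - \sin\theta \int_\theta^{\pi/2} \frac{B(\tau)}{\sin\tau}\,\mathrm{d}\tau,
\]
together with the uniform bound $|B(\tau)| \leq C\,\|f\|_D$ that was established en route to proving $f(0)=0$.

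First I would divide the identity by $\sin\theta$, insert the bound on $B$, and use $\int \mathrm{d}\tau/\sin\tau = \ln\tan(\tau/2)$ to obtain the pointwise estimate
\[
\Bigl|\frac{f(\theta)}{\sin\theta}\Bigr| \leq |f(\tfrac{\pi}{2})| + C\,\|f\|_D\,\bigl|\ln\tan(\theta/2)\bigr|,\qquad \theta\in(0,\tfrac{\pi}{2}),
\]
and by integrating the analogous identity upward from $\tfrac{\pi}{2}$ the symmetric bound on $(\tfrac{\pi}{2},\pi)$. To handle the constant $|f(\tfrac{\pi}{2})|$, I would integrate the representation formula against $\sin\theta\,\mathrm{d}\theta$ on $(0,\tfrac{\pi}{2})$: the left-hand side is bounded by $C\|f\|_{L^2_{\sin}}\leq C\|f\|_D$ by Cauchy--Schwarz, and the remainder integral is bounded by a universal multiple of $\|f\|_D$ via the uniform control on $B$, which forces $|f(\tfrac{\pi}{2})|\leq C\,\|f\|_D$ after dividing by the positive constant $\int_0^{\pi/2}\sin^2\theta\,\mathrm{d}\theta$.

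With the pointwise inequality in hand, I would raise both sides to the $p$-th power and integrate against $\sin\theta\,\mathrm{d}\theta$. The proof then reduces to the elementary fact that
\[
\int_0^\pi \bigl(1 + |\ln\tan(\theta/2)|\bigr)^p \sin\theta\,\mathrm{d}\theta < \infty,
\]
which follows because as $\theta\to 0$ the integrand behaves like $|\ln\theta|^p\,\theta$, and $x|\ln x|^p\to 0$ as $x\to 0^+$; the endpoint $\theta\to\pi$ is symmetric. Taking $p$-th roots yields the asserted bound with a constant depending only on $p$.

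The main (rather minor) obstacle is really just the bookkeeping around the $\sin^{-1}$ singularities: one must check that $B$ is genuinely uniformly bounded on all of $(0,\tfrac{\pi}{2})$, and that the logarithmic antiderivative of $1/\sin\tau$ diverges at the right rate to exactly cancel against $\sin\theta$ to a polynomial-logarithmic singularity. Once these are arranged, the weight $\sin\theta\,\mathrm{d}\theta$ smooths the residual logarithm and the $L^p$-bound is routine.
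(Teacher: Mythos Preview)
Your proposal is correct and follows essentially the same route as the paper: both divide the representation \eqref{eq:legendre_expression_f} by $\sin\theta$, use the uniform bound on $B$ and the antiderivative of $1/\sin\tau$ to obtain a pointwise logarithmic control, and then observe that $|\ln\tan(\theta/2)|^p\sin\theta$ is integrable. You are in fact slightly more careful than the paper, which writes the pointwise bound \eqref{eq:legendre_bound_fsin} without explicitly separating out and controlling the constant term $|f(\tfrac{\pi}{2})|$; your integration argument to bound $|f(\tfrac{\pi}{2})|\le C\|f\|_D$ cleanly fills that small gap.
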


\begin{lemma} \label{lemma:sl_linf_bound}
    Let $f\in D(\hat{\mathcal{L}})$. Then there is a global constant $C>0$ such that
    \begin{equation*}
        \|f \|_{L^\infty} \leq C \|f \|_D \, .
    \end{equation*}
    \begin{proof}
        By \cref{prop:legendre_operator}, the function $f f' \sin$ vanishes at the endpoints of the interval $(0,\pi)$. Hence, by partial integration we find that
        \begin{align*}
            \left\langle f,-\hat{\mathcal{L}}f\right\rangle =  \int_0^\pi f (-\hat{\mathcal{L}}f) \, \sin \theta \, \mathrm{d} \theta \
            & = \int_0^\pi f'^2 \sin\theta \, \mathrm{d} \theta + \int_0^\pi \frac{f^2}{\sin\theta} \, \mathrm{d} \theta \, ,
        \end{align*}
        from which we conclude by Young's inequality that
        \begin{equation*}
            \|f' \|_{L_{\sin}^2}^2 + \| \frac{f}{\sin} \|_{L_{\sin}^2}^2 \leq \|f \|_{L_{\sin}^2}^2 + \|\hat{\mathcal{L}}f \|_{L_{\sin}^2}^2 = \|f\|_D^2   \, .
        \end{equation*}
        This implies with $f(0) = 0$ that for $\theta \in (0,\pi)$ we have
        \begin{equation*}
            \frac{1}{2} f(\theta)^2 = \int_0^\theta f f' \, \mathrm{d} \tau \leq \|f' \|_{L_{\sin}^2}^2 + \| \frac{f}{\sin} \|_{L_{\sin}^2}^2 \leq \|f \|_D^2\, .
        \end{equation*}
        The statement then follows immediately.
    \end{proof}
\end{lemma}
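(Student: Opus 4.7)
The plan is to run a Sobolev-type embedding argument. The point of departure is to note that by the preceding proposition, every $f \in D(\hat{\mathcal{L}})$ satisfies $f(0)=f(\pi)=0$ together with $\lim_{\theta \to 0,\pi} f'(\theta)\sin\theta = 0$. With these endpoint relations in hand, I can compute $\langle f,-\hat{\mathcal{L}} f\rangle_{L_{\sin}^{2}}$ by writing out $-\hat{\mathcal{L}} f = -\frac{1}{\sin\theta}(\sin\theta\, f')' + f/\sin^{2}\theta$, integrating the derivative term by parts, and discarding the vanishing boundary contributions. This collapses to the weighted $H^{1}$-type identity
\begin{equation*}
\langle f,-\hat{\mathcal{L}} f\rangle_{L_{\sin}^{2}} = \|f'\|_{L_{\sin}^{2}}^{2} + \|f/\sin\|_{L_{\sin}^{2}}^{2}.
\end{equation*}
A Cauchy--Schwarz estimate on the left-hand side bounds it by $\|f\|_{L_{\sin}^{2}}\|\hat{\mathcal{L}} f\|_{L_{\sin}^{2}} \le \|f\|_{D}^{2}$, so each of the two norms on the right is individually controlled by $\|f\|_{D}$.

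Next, exploiting $f(0)=0$ and the local absolute continuity of $f$, the fundamental theorem of calculus gives $\tfrac{1}{2}f(\theta)^{2} = \int_{0}^{\theta} f(s)f'(s)\,ds$ for every $\theta\in(0,\pi)$, where the limit at $0$ is justified by the absolute integrability that follows from the energy identity already established. Rewriting the integrand as $(f/\sin s)\cdot f' \cdot \sin s$ and applying Cauchy--Schwarz in the weighted space $L_{\sin}^{2}((0,\theta))$ bounds the right-hand side by $\|f/\sin\|_{L_{\sin}^{2}}\|f'\|_{L_{\sin}^{2}}$. Combining this with the first step and a single application of Young's inequality produces a pointwise estimate $f(\theta)^{2}\le C\|f\|_{D}^{2}$ uniform in $\theta$; taking the supremum over $\theta\in(0,\pi)$ closes the argument.

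The only non-routine point is justifying the boundary-free integration by parts for an operator with singular coefficients at both endpoints. This hinges entirely on the preceding proposition about the endpoint behaviour of elements of $D(\hat{\mathcal{L}})$; once $f(0)=f(\pi)=0$ and $f'\sin\to 0$ are in hand, everything reduces to a standard weighted Sobolev embedding computation and no additional regularity theory is needed.
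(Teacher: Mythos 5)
Your proposal is correct and follows essentially the same route as the paper: the boundary-free integration by parts justified by \cref{prop:legendre_operator}, the resulting identity $\langle f,-\hat{\mathcal{L}}f\rangle_{L_{\sin}^2}=\|f'\|_{L_{\sin}^2}^2+\|f/\sin\|_{L_{\sin}^2}^2$ controlled by $\|f\|_D^2$, and the fundamental theorem of calculus with the weighted Cauchy--Schwarz split $ff'=(f/\sin)\,f'\,\sin$. The only difference is that you spell out this splitting explicitly where the paper leaves it implicit; no gap either way.
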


\bibliographystyle{amsplain}
\bibliography{bib_books.bib, bib_papers_sphere.bib}

\end{document}